\newtheorem{thm}{Theorem}[section]
 \newtheorem{cor}[thm]{Corollary}
 \newtheorem{lemma}[thm]{Lemma}
 \newtheorem{prop}[thm]{Proposition}
\theoremstyle{definition}
\newtheorem{rem}[thm]{Remark}
 \newtheorem{example}[thm]{Example}
\def\ord{{\mathrm{ord}}}
\def\J{{\mathrm{J}}}
\def\LL{{\mathrm{LL}}}
\def\lcm{{\mathrm{lcm}}}
\def\GAP{{\sf GAP}}
\def\N{{\mathbb{N}}}
\def\Q{{\mathbb{Q}}}
\def\Z{{\mathbb{Z}}}
\newcommand\refPartI[1]{%
\IfStrEqCase{#1}{{thm_upper_and_lower_bounds_I}{7.1}
    {lem_LL_elementary}{7.1}
    {cor_n_leq_3}{7.1}
    {prop_lambda}{3.2}
    {prop_m_and_e1}{6.2}  
    {rem_bound_overview}{7.1}
    {e_equals2346}{7.1}  
    {thm_reduction_of_n}{5.1}
    {cor_uniserial_case}{5.1}
    {isomorphism_by_Loewy_vector}{5.2}  
    {m_and_digit_sums}{6.1}  
    {example_m_trivial}{6.1}
    {same_m_for_same_subgroup}{6.1}
    {lem_m_eq_2}{6.3}
    {rem_m_eq_2_cyclic_case}{6.1}
    {lem_m_bounded_by_p}{6.4}}  
    [???]}
\title{The Loewy Structure of Certain Fixpoint Algebras, Part II}
\author{T. Breuer, L. H\'ethelyi, E. Horv\'ath and B. K\"ulshammer}
\address{
{\bf T. Breuer} \\
Lehrstuhl D f\"ur Mathematik \\
RWTH Aachen University \\
Pontdriesch 14-16 \\
D-52062 Aachen, Germany \\
e-mail: sam@math.rwth-aachen.de \\
\mbox{} \\
{\bf L. H\'ethelyi} \\
Department of Algebra \\
Budapest University of Technology and Economics \\
M\H uegyetem rkp. 3-9 \\
H-1111 Budapest, Hungary \\
e-mail: fobaba@t-online.hu \\
\mbox{} \\
E. Horv\'ath \\
Department of Algebra \\
Budapest University of Technology and Economics \\
M\H uegyetem rkp. 3-9 \\
H-1111 Budapest, Hungary \\
e-mail: he@math.bme.hu \\
\mbox{} \\
B. K\"ulshammer \\
Institut f\"ur Mathematik \\
Friedrich-Schiller-Universit\"at \\
D-07737 Jena, Germany \\
e-mail: kuelshammer@uni-jena.de}
\thanks{The work on this paper was begun while the last author was visiting
the Technical University of Budapest in October 2017.
Part of the work was also done while the last author was in residence
at the Mathematical Sciences Research Institute in Berkeley, California,
during the Spring 2018 semester,
supported by the National Science Foundation under Grant No.~DMS-1440140.
The first author gratefully acknowledges support by the German Research
Foundation (DFG) within the SFB-TRR 195 {\it Symbolic Tools in Mathematics
and their Applications}.
The research in this paper was also supported by the NKFI-Grants
No.~115288 and 115799.}
\begin{document}

\begin{abstract}
In Part I of this paper, we introduced a class of certain algebras of finite dimension over a field. 
All these algebras are split, symmetric and local. Here we continue to investigate their Loewy structure. 
We show that in many cases their Loewy length is equal to an upper bound established in Part I, 
but we also construct examples where we have a strict inequality.
 \\[+2mm]
\phantom{ \subjclassname{???}} \\
\phantom{ {\bf Keywords}: ???}
\end{abstract}

\maketitle


\section{Introduction}

In Part I of this paper, we introduced a class of finite-dimensional algebras $A(q,n,e)$ over a field $F$, depending on parameters $q,n,e \in \N$ such that $q > 1$ and 
$e \mid q^n-1$. All these algebras are split, local and symmetric; their dimension is $z+1$ where $z := \frac{q^n-1}{e}$. When $q$ is a prime $p$ and $F$ is algebraically closed of
characteristic $p$ then $A(q,n,e)$ is isomorphic to a fixpoint algebra $(FP)^H$ where $FP$ is the group algebra of an elementary abelian $p$-group $P$ of order $p^n$ over $F$  and 
$H$ is a cyclic group of order $e$ acting freely on $P \setminus \{1\}$.

In Part I, we presented an inductive procedure in order to compute the Loewy structure of $A(q,n,e)$, and we proved the following upper bound for the Loewy length of $A(q,n,e)$:
\begin{equation}
 \LL(A(q,n,e)) \leq \left\lfloor n \frac{q-1}{m} \right\rfloor + 1;
\end{equation}
here $m = m(q,e)$ is defined as the smallest positive integer $t$ such that there exists a sum of $t$ powers of $q$ which is divisible by $e$. This number can be defined for 
arbitrary $q,e \in \N$ with $\gcd(q,e) = 1$. 

In \cite[Corollary~\refPartI{cor_uniserial_case}]{BHHK1} we proved that $A(q,n,e)$ is uniserial if and only if $e$ is divisible by $\frac{q^n-1}{q-1}$. 

The main purpose of this paper is to show that, in many cases, the inequality in (1) is in fact an equality. For example, we will show that
\begin{equation}
 \LL(A(q,n,e)) = \left\lfloor n \frac{q-1}{m} \right\rfloor + 1
\end{equation}
whenever one of the following conditions is satisfied:
\begin{itemize} 
 \item
   $n \leq 3$,
   see~\cite[Corollary~\refPartI{cor_n_leq_3}]{BHHK1}.
   (There are examples for $n=5$ where (2) does not hold,
   see Remark~\ref{ref_database_examples};
   the case $n=4$ is still open. For $n=4$ and $q \leq 100$, the equality (2) holds.)
 \item
   $e \leq 32$,
   see Proposition~\ref{prop_e_at_most_32}.
   (There are examples for $e = 33$ where (2) does not hold,
   see Proposition~\ref{prop_small_LL_e=33}.)
 \item
   $e \mid \frac{q^d-1}{q-1}$ for $d \in \{1,\ldots,5\}$,
   see Propositions~\ref{prop_e_divides_Phi_d} and~\ref{prop_e_divides_Phi_5}.
   (There are examples for larger $d$ where (2) does not hold,
   see Remark~\ref{ref_database_examples}.)
 \item
   $e \mid \Phi_d(q)$ where $d$ is a power of $2$ or $d \in \{3,5,6,9,10\}$,
   see Remark~\ref{rem_e_divides_Phi_d}.
   (Here $\Phi_d(X)$ denotes the $d$-th cyclotomic polynomial.) 
 \item
   $e$ is a power of a Pierpont prime,
   see Theorem~\ref{thm_ll_for_prime_powers}.
   (A prime number $p$ is called a Pierpont prime
   if it has the form $p = 1+2^a3^b$ where $a,b \in \N_0$.)
\item
   $e$ is a divisor of $q^n-1$ and a multiple of $q^{\frac{n}{2}}-1$, see Lemma~\ref{lemma_loewy_length_three}.
 \item
   $m(q,e) \mid q-1$, see \cite[Theorem~\refPartI{thm_upper_and_lower_bounds_I}]{BHHK1}.
 \item
   $m(q,e) = 2$, see \cite[Lemma~\refPartI{lem_m_eq_2}]{BHHK1}.
 \item
   $m(q,e) \ge \frac{e}{3}$, see Proposition~\ref{prop_LL_for_large_m_relative_to_e}.
 \item
   $z < 70$.
   (There is an example for $z = 70$ where (2) does not hold,
   see Example~\ref{expl_z_eq_70}.)
\end{itemize}
More conditions and details can be found in the body of this paper.

We also define a certain equivalence relation on our set of algebras. Two equivalent algebras are isomorphic, and the upper bounds for their Loewy length in (1) are the same. 
One of us (T. B.) has computed a database with 768\,511 equivalence classes
of algebras.
These contain all algebras $A(q,n,e)$ with $z \leq 10\,000$.
These algebras fall into at least $478\,145$
and at most $484\,234$ isomorphism classes,
see Remark~\ref{ref_database_examples}.

It turns out that the 
equality (2) holds for 757 790 of these equivalence classes. In only one of the equivalence classes the difference between both sides in (1) is bigger than 1 (namely 2). 
Thus, at least for algebras of dimension up to 10 000, the bound for the Loewy length in (1) appears to be reasonable. 

In this paper, we will denote by $\J(A)$ the Jacobson radical and by $\LL(A)$ the Loewy length of a finite-dimensional algebra $A$. 
If $\LL(A) = l$ and $\dim \J(A)^{i-1}/\J(A)^i = d_i$ for $i = 1,\ldots, l$ then $(d_1,\ldots,d_l)$ is called the Loewy vector of $A$. 

For $q,n,e \in \N$ with $q>1$ and $e \mid 
q^n-1$, the $F$-algebra $A(q,n,e)$ is constructed as follows. Consider the ideal $I := (X_1^q,\ldots,X_n^q)$ of the polynomial algebra $F[X_1,\ldots,X_n]$, and set $x_j := 
X_j+I$ for $j=1,\ldots,n$. Then $A(q,n,e)$ is the subalgebra of $F[X_1,\ldots,X_n]/I = F[x_1,\ldots,x_n]$ generated by all monomials $x_1^{i_1} \ldots x_n^{i_n}$ such that
$i_1 + qi_2 + \ldots + q^{n-1}i_n \equiv 0 \pmod{e}$; note that $x_j^q = 0$ for $j=1,\ldots,n$. We showed in Part I that the elements $b_0,b_1,\ldots,b_z$ constitute an
$F$-basis of $A(q,n,e)$ where  $b_k = x_1^{i_1} \ldots x_n^{i_n}$ and $ke = i_1 + qi_2 + \ldots + q^{n-1}i_n$ is the $q$-adic expansion of $ke$, for $k=0,\ldots,z$. 

Our paper is structured as follows. In Section 2, we deal with the function $m(q,e)$ and prove several properties. Tables with the values of this function can be found at the 
end of the paper. In Section 3 we present various methods in order to obtain lower bounds for the Loewy length of $A(q,n,e)$. In Section 4 we investigate the validity of (2) in
the situation where $e$ is a prime power, and in Section 5 we consider the case where $e$ divides $\frac{q^n-1}{q-1}$. Section 6 contains our results for the case when $e$ is a 
small number. Here we also present a series of examples where the inequality in (1) is strict. Then we deal with algebras where $m(q,e)$ is large (relative to $e$) or $q$ is 
small. In the last part of the paper, we change our perspective and consider all the algebras $A(q,n,e)$ of a fixed dimension $d = z+1$.


\section{Congruence Properties of Sums of Powers} 

Let $q,e \in \N$ such that $\gcd(q,e) = 1$.
In \cite[Section 6]{BHHK1}, we defined $m(q,e)$ as the smallest positive
integer $t$ with the property that there exists a sum of $t$ powers
of $q$ which is divisible by $e$. Then $1 \leq m(q,e) \leq e$; moreover,
$m(q,e) = e$ if and only if $q \equiv 1 \pmod{e}$, and $m(q,e) = 1$ if and only if $e = 1$ (cf. \cite[Example~\refPartI{example_m_trivial}]{BHHK1}).

For $q > 1$, there is a slightly different description of $m(q,e)$. Recall that we denote by $s_q(x) = x_0 + x_1 + \ldots + x_n$ the $q$-adic digit sum of a nonnegative integer $x$
with $q$-adic expansion $x = x_0 + x_1q + \ldots + x_nq^n$. In \cite[Proposition~\refPartI{m_and_digit_sums}]{BHHK1} we proved that
$$m(q,e) = \min\{s_q(ke): k \in \N\} = \min\{s_q(ke): k = 1,\ldots,z\}$$
where $z := \frac{q^n-1}{e}$ for any $n \in \N$ such that $e \mid q^n-1$; usually we take $n := \mathrm{ord}_e(q)$ where $\mathrm{ord}_e(q)$ denotes the order of $q+e\Z$ in $(\Z/e\Z)^\times$,
the order of $q$ modulo $e$. 

Our first result is related to \cite[Proposition~\refPartI{thm_reduction_of_n}]{BHHK1}.

\begin{prop}\label{m_functional_equation}
Let $q, n, e \in \N$ such that $q > 1$ and $e \mid q^n-1$.
Moreover, let $n', e' \in \N$ such that $n' \mid n$ and 
$e = e' \frac{q^n-1}{q^{n'}-1}$.
Then $m(q,e) = \frac{n}{n'} m(q,e')$.
\end{prop}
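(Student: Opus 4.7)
The plan is to reduce the claim to a direct comparison of $q$-adic digit sums of multiples of $e$ and of $e'$, using the characterization
$$m(q,e) = \min\{s_q(Je) : J = 1,\ldots,z\}, \qquad z = \tfrac{q^n-1}{e},$$
from \cite[Proposition~\refPartI{m_and_digit_sums}]{BHHK1}. The preliminary observation is that
$$q^n - 1 = (q^{n'}-1)\cdot\frac{q^n-1}{q^{n'}-1} \qquad\text{and}\qquad e = e'\cdot\frac{q^n-1}{q^{n'}-1},$$
so the two ``$z$-values'' agree:
$$z := \frac{q^n-1}{e} = \frac{q^{n'}-1}{e'}.$$
In particular $e' \mid q^{n'}-1$, so $m(q,e')$ is well defined and is the minimum of $s_q(Je')$ over the \emph{same} range $J \in \{1,\ldots,z\}$.

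The heart of the argument will be to prove that for every $J$ in this range one has the pointwise identity
$$s_q(Je) = k\cdot s_q(Je'), \qquad k := n/n'.$$
Fix such a $J$. Since $Je' \leq ze' = q^{n'}-1$, the $q$-adic expansion of $Je'$ fits into $n'$ digits: write $Je' = \sum_{i=0}^{n'-1} f_i q^i$ with $0 \leq f_i < q$. Multiplying by $\tfrac{q^n-1}{q^{n'}-1} = \sum_{r=0}^{k-1} q^{rn'}$ gives
$$Je = \sum_{r=0}^{k-1}\sum_{i=0}^{n'-1} f_i\, q^{i+rn'}.$$
The exponents $i+rn'$ with $0 \leq i < n'$ and $0 \leq r < k$ are pairwise distinct and exhaust $\{0,1,\ldots,n-1\}$, so this display is already the $q$-adic expansion of $Je$ (no carries occur and each digit $f_i$ or $0$ is $<q$). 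Therefore $s_q(Je) = k\sum_{i} f_i = k\cdot s_q(Je')$. Taking the minimum over $J \in \{1,\ldots,z\}$ then yields $m(q,e) = k\cdot m(q,e')$, as required.

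I do not foresee a genuine obstacle: once the identity $z = (q^{n'}-1)/e'$ is noted, the statement collapses to the ``no-carry'' assertion for $Je' \cdot \tfrac{q^n-1}{q^{n'}-1}$, which is automatic from the bound $Je' < q^{n'}$ on the minimising range. The only point requiring mild care is citing the digit-sum characterization of $m(q,\cdot)$ in the bounded form, so that both minima are really taken over the \emph{same} finite set $\{1,\ldots,z\}$ rather than over all of $\N$; after that the argument is a single $n'$-digit-block replication.
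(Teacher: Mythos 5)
Your proposal is correct and follows essentially the same route as the paper: both reduce to the digit-sum characterization over the common range $\{1,\ldots,z\}$ with $z=\frac{q^n-1}{e}=\frac{q^{n'}-1}{e'}$, and both obtain the pointwise identity $s_q(ke)=\frac{n}{n'}\,s_q(ke')$ by multiplying the $n'$-digit expansion of $ke'$ by $1+q^{n'}+\cdots+q^{n-n'}$ and observing that no carries occur.
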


\begin{proof}
We set $z:= \frac{q^n-1}{e} = \frac{q^{n'}-1}{e'}$.
Since 
$m(q,e) = \min \{s_q(ke): k = 1,\ldots,z \}$ 
and
$m(q,e') = \min \{s_q(ke'): k = 1,\ldots,z\}$ it suffices to show that $s_q(ke) = \frac{n}{n'} s_q(ke')$ for $k=1,\ldots,z$.
Let $k \in \{1,\ldots,z\}$, and consider the $q$-adic expansion $ke' = \sum_{t=1}^{n'} q^{t-1}i_t$. Then 
\begin{eqnarray*}
 ke &= ke'\frac{q^n-1}{q^{n'}-1} = (\sum_{t=1}^{n'} q^{t-1}i_t)(1 + q^{n'} + q^{2n'} + \ldots + q^{n-n'}) \cr
 &= \sum_{t=1}^{n'}q^{t-1}i_t + \sum_{t=1}^{n'} q^{n'+t-1}i_t + \ldots + \sum_{t=1}^{n'} q^{n-n'+t-1}i_t 
\end{eqnarray*} 
is the $q$-adic expansion of $ke$. Hence $s_q(ke) = \frac{n}{n'} \sum_{t=1}^{n'} i_t =  \frac{n}{n'} s_q(ke')$.
\end{proof}

We record two special cases.

\begin{cor}\label{cor_m_functional_equation}
Let $q, n, e \in \N$ such that $q > 1$ and $e \mid q^n-1$.
\begin{itemize}
\item[(i)]
    If $e' \in \N$ such that $e = e' \frac{q^n-1}{q-1}$ then $m(q,e) = ne'$.
\item[(ii)]
    If $n' \in \N$ such that $n' \mid n$ and $e = \frac{q^n-1}{q^{n'}-1}$
    then $m(q,e) = \frac{n}{n'}$.
\end{itemize}
\end{cor}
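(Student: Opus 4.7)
The plan is to deduce both parts directly from Proposition~\ref{m_functional_equation} by choosing the right specialization of the parameters $n'$ and $e'$ in the functional equation $m(q,e) = \frac{n}{n'} m(q,e')$.

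For part (i), I take $n' = 1$ in the proposition. Then the hypothesis $e = e' \frac{q^n-1}{q^{n'}-1}$ becomes exactly $e = e' \frac{q^n-1}{q-1}$, matching our assumption, and the divisibility requirement $e' \mid q^{n'}-1 = q-1$ is satisfied (this is forced because $e \mid q^n-1$ together with $e = e'\frac{q^n-1}{q-1}$ forces $e' \mid q-1$). The proposition then gives $m(q,e) = n \cdot m(q,e')$. Since $e' \mid q-1$ means $q \equiv 1 \pmod{e'}$, the characterization recalled at the beginning of Section~2 yields $m(q,e') = e'$, and hence $m(q,e) = ne'$.

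For part (ii), I take $e' = 1$ in the proposition, so that $e = \frac{q^n-1}{q^{n'}-1}$ as required. The proposition gives $m(q,e) = \frac{n}{n'} m(q,1)$, and from the same preliminary remarks we have $m(q,1) = 1$, so $m(q,e) = \frac{n}{n'}$.

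There is no real obstacle here; the only small point to check is that the hypotheses of Proposition~\ref{m_functional_equation}, in particular $e \mid q^n-1$ and $e' \mid q^{n'}-1$, are automatically met in each specialization, which is straightforward from the divisibility $(q^{n'}-1) \mid (q^n-1)$ when $n' \mid n$.
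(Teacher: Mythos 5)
Your proposal is correct and follows exactly the paper's own argument: specialize Proposition~\ref{m_functional_equation} with $n'=1$ (resp.\ $e'=1$), then use that $m(q,e')=e'$ when $q\equiv 1\pmod{e'}$ (which holds since $\frac{q-1}{e'}=\frac{q^n-1}{e}\in\N$) and $m(q,1)=1$. Nothing to add.
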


\begin{proof}
Apply Proposition~\ref{m_functional_equation} with $n' = 1$ or $e' = 1$,
respectively.
In part~(i), we get $m(q,e) = n m(q,e')$,
and since $\frac{q-1}{e'} = \frac{q^n-1}{e} \in \N$,
\cite[Example~\refPartI{example_m_trivial}~(i)]{BHHK1} implies
that $m(q,e') = e'$.
In part~(ii), we get $m(q,e) = \frac{n}{n'} m(q,1)$,
and \cite[Example~\refPartI{example_m_trivial}~(ii)]{BHHK1} implies
that $m(q,1) = 1$. 
\end{proof}

\begin{example}
Table~\ref{table_meq_small} gives some of the numbers $m(q,e)$.
The columns in this table are labelled by $e$, and the rows by $q$.
The computations were done
using the computer algebra system {\GAP}~\cite{GAP}.
\end{example}

\begin{example}\label{example_m_eq_2}
Let $q, e \in \N $ such that $1 \not\equiv q \equiv -1 \pmod{e}$.
Then $m(q,e) = 2$; in fact, 
$m(q,e) \leq 2$ since $e \mid 1+q$, and $m(q,e) \neq 1$ since $ e \neq 1$.
\end{example}

Next we investigate the situation where $m(q,e)$ is large.

\begin{prop}\label{prop_large_m_relative_to_e}
Let $q, e \in \N$ such that $1 < q < e$ and $\gcd(q,e) = 1$.
Then $m := m(q,e) \geq \frac{e}{3}$ if and only if one of the following holds:
\begin{itemize}
\item[(i)]
    $q \geq 3$, $\gcd(2,q) = 1$, $e = 2q-2$ (where $m = \frac{e}{2} = q-1$).
\item[(ii)]
    $q \geq 4$, $\gcd(3,q) = 1$, $e = 3q-3$ (where $m = \frac{e}{3} = q-1$).
\item[(iii)]
    $q \geq 5$, $\gcd(6,q) = 1$, $e = \frac{3q-3}{2}$
    (where $m = \frac{e}{3} = \frac{q-1}{2}$).
\item[(iv)]
    the pair $(q,e)$ appears in the following table:
\begin{center}
\begin{tabular}{c|ccc|cc|ccc|c}
$q$ & $2$ & $2$ & $2$ & $3$ & $3$ & $4$ & $4$ & $4$ & $5$ \cr
$e$ & $3$ & $5$ & $7$ & $5$ & $8$ & $5$ & $7$ & $15$ & $24$ \cr \hline
$m$ & $2$ & $2$ & $3$ & $2$ & $4$ & $2$ & $3$ & $6$ & $8$
\end{tabular}
\end{center}
\end{itemize}
\end{prop}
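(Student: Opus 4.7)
The proof splits into the ``if'' and ``only if'' directions.

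\textbf{``If'' direction.} For each family (i)--(iii) one verifies $m = e/2$ or $e/3$ by direct calculation. In case (i), $e = 2(q-1) \mid q^2-1$ and $z = (q+1)/2$. Each $ke$ with $1 \leq k \leq z$ is a multiple of $q-1$ in $[1, q^2-1]$ and has a base-$q$ expansion of length at most $2$; since $s_q(x) \equiv x \pmod{q-1}$ and $0 < s_q(ke) \leq 2(q-1)$, we get $s_q(ke) \in \{q-1, 2(q-1)\}$ and hence $m = q - 1 = e/2$. Case (ii) is analogous (with $ke < q^3$ bounding $s_q(ke) \leq 3(q-1)$), and case (iii) uses a parity argument separating $k$ even from $k$ odd: for $k$ odd, $ke \equiv (q-1)/2 \pmod{q-1}$, giving $s_q(ke) \geq (q-1)/2 = e/3$. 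The nine sporadic pairs in (iv) are verified individually.

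\textbf{``Only if'' direction.} Suppose $m := m(q,e) \geq e/3$ and set $n := \ord_e(q) \geq 2$. If $m \leq 2$ then $e \leq 6$ and a finite check yields only pairs already in the list. Otherwise $m \geq 3$ and $e \geq 7$. For $n = 2$, write $e = aq + r$ with $a, r \in \{1, \ldots, q-1\}$. Since $ke \leq q^2 - 1$ for all $k \leq z$, no carry occurs beyond two digits, and one obtains the clean formula
\[
s_q(ke) = ka + s_q(kr), \qquad 1 \leq k \leq z.
\]
The condition $s_q(e) \geq e/3$ at $k=1$ reads $a(q-3) \leq 2r$, so $a \leq 2(q-1)/(q-3)$; for $q \geq 7$ this forces $a \leq 2$. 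A case distinction on $a$ then yields, by elementary calculation, family (i) (from $a = 1$, $r = q-2$), family (iii) (from $a = 1$, $r = (q-3)/2$, requiring $\gcd(6,q) = 1$), and family (ii) with $q \equiv 2 \pmod 3$ (from $a = 2$, $r = q-3$). The small values $q \in \{2,3,4,5,6\}$, where $a$ may exceed $2$, are checked by hand and produce the sporadic pairs $(2,3), (3,8), (4,15), (5,24)$. The case $n = 3$ is treated similarly with three-digit expansions, and accounts for the pieces of (ii) and (iii) where $q \equiv 1 \pmod 3$ together with the sporadic pairs $(2,7), (4,7)$.

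\textbf{Main obstacle.} The delicate part is ruling out $n \geq 4$. The naive bound $e \leq 3 s_q(e) \leq 3 n (q-1)$ from $k=1$ alone is too weak. I would refine it by exploiting that $m \geq 3$ forces $-1 \notin \langle q \rangle \leq (\Z/e\Z)^\times$, combined with Proposition~\ref{m_functional_equation} whenever $e$ has a factor of the form $(q^n-1)/(q^{n'}-1)$; for the finitely many remaining candidates one exhibits an explicit $k$ with $s_q(ke) < e/3$. This reduces everything to the $n \in \{2, 3\}$ analysis above.
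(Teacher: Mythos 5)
Your ``if'' direction is fine (and for (i)--(iii) it is more laborious than necessary: the paper simply notes that the claimed value of $m$ equals both $\gcd(e,q-1)$ and $s_q(e)$, which pins $m$ down by the divisibility and upper-bound facts from Part~I). The problem is the ``only if'' direction. You organize the case analysis by $n = \ord_e(q)$, carry it out for $n \in \{2,3\}$, and then concede that ruling out $n \geq 4$ is ``the delicate part''; the plan you offer for it (use $-1 \notin \langle q + e\Z\rangle$, invoke Proposition~\ref{m_functional_equation} when $e$ happens to have a factor $(q^n-1)/(q^{n'}-1)$, and check ``the finitely many remaining candidates'') is not a proof. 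There is no argument for why only finitely many candidates with $\ord_e(q) \geq 4$ remain, the functional equation applies only to very special $e$, and knowing $-1 \notin \langle q + e\Z\rangle$ gives no control on $s_q(ke)$. Since a priori $\ord_e(q)$ can be any divisor of $\varphi(e)$, this leaves the bulk of the parameter space untouched; the fact that all solutions happen to have $\ord_e(q) \leq 3$ is a consequence of the proposition, not something you may assume.

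The paper's proof sidesteps $\ord_e(q)$ entirely. Its first move is the inequality $m \leq s_q(e)$ applied to the base-$q$ expansion $e = i_1 + q i_2 + \cdots + q^l i_{l+1}$ of $e$ itself: $3m \geq e$ then forces $2i_1 + (3-q)i_2 + \cdots + (3-q^l)i_{l+1} \geq 0$, which for $e > 30$ immediately kills $l \geq 2$ and reduces to $q < e < q^2$, i.e.\ $e = i_1 + q i_2$ with $i_2 \leq 2$. Each surviving two-digit candidate is then eliminated (or identified with one of (i)--(iii)) by exhibiting an \emph{explicit} multiple $ke$ whose digit sum is below $e/3$ (for instance $(q-1)(3q-1) = 2q^2 + (q-4)q + 1$ disposes of $e = 3q-1$). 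These explicit multiples are the real content of the ``only if'' direction, and your proposal supplies them only under the extraneous hypothesis $e \mid q^2-1$ or $e \mid q^3-1$. To repair your argument you would either have to adopt the paper's digit-count reduction or independently produce, for every $e$ with $q < e < q^2$ not in the list, a witness $k$ with $s_q(ke) < e/3$ without assuming anything about $\ord_e(q)$.
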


\begin{proof}
For the ``if'' direction,
observe that the claimed values of $m$ for given $q$ and $e$
are equal to both $e_1 = \gcd(e, q-1)$ and $s_q(e)$ in the cases (i)--(iii), 
so that we can apply \cite[Lemma~\refPartI{prop_m_and_e1}~(i)]{BHHK1}
and \cite[Proposition~\refPartI{m_and_digit_sums}]{BHHK1}.
In case (iv), the values for $m$ are given in Table~\ref{table_meq_small}. In each case, we have $3m(q,e) \geq e$.

For the ``only if'' direction, suppose that $m \geq e/3$. 
A careful inspection of Table~\ref{table_meq_small} shows that we may assume $e > 30$.
Consider the $q$-adic expansion
$e = i_1 + qi_2 + \ldots + q^li_{l+1}$
where $i_{l+1} \neq 0$.
Then $m \leq s_q(e) = i_1 + i_2 + \ldots + i_{l+1}$,
so $0 \leq 3 m - e \leq 2 i_1 + (3-q) i_2 + \cdots + (3-q^l) i_{l+1}$.

If $l \geq 2$
then $0 \leq 3m-e \leq 2i_1 + (3-q)i_2 + (3-q^l)i_{l+1}$.
Assume first that $q \geq 3$.
Then 
\begin{eqnarray*}
   0 & \leq & 2i_1 + (3-q^l)i_{l+1} \leq 2(q-1) + (3-q^l) = 2q+1-q^l \\
     & = & 1 - q \left( q^{l-1} - 2 \right) \leq 1 - 3 (3-2) < 0,
\end{eqnarray*}
a contradiction.
Thus we must have $q = 2$.
Then $0 \leq 2i_1 + i_2 + (3-2^l) \leq 6 - 2^l$.
This implies that $l=2$, and we have the contradiction $e = i_1 +2i_2 + 4i_3 \leq 7$.

{}From now on, assume $l = 1$.
Then $0 \leq 3m-e \leq 2i_1 + (3-q)i_2$,
i.~e., $(q-3)i_2 \leq 2i_1 \leq 2(q-1)$.
Since $30 < e = i_1 + qi_2 < q^2$ this implies that $q \geq 6$.

\begin{itemize}
\item[$i_2 \geq 3$:]
    Then $3(q-3) \leq 2i_1 \leq 2(q-1)$, i.~e., $q \leq 7$.

    If $q = 6$ then $9 \leq 3i_2 \leq 2i_1 \leq 10$,
    i.~e., $i_2 = 3$ and $i_1 = 5$.
    Thus we have the contradiction $e = 23$.

    If $q = 7$ then $12 \leq 4i_2 \leq 2i_1 \leq 12$,
    i.~e., $i_2 = 3$ and $i_1 = 6$.
    Hence we have the contradiction $e = 27$.

\item[$i_2 = 2$:]
    Then $q-3 \leq i_1 \leq q-1$.

    If $i_1 = q-1$ then $e = 2q + (q-1) = 3q-1$.
    Since $(q-1)(3q-1) = 3q^2-4q+1 = 2q^2+q(q-4) + 1$, this
    leads to the contradiction $m \leq q-1 < \frac{e}{3}$.

    If $i_1 = q-2$ then $e = 2q + (q-2) = 3q-2$.
    Since $(q-1)(3q-2) = 3q^2 -5q + 2 = 2q^2 + q (q-5) + 2$, this
    leads to the contradiction $m \leq q-1 < \frac{e}{3}$.

    If $i_1 = q-3$ then $e = 2q + (q-3) = 3q-3$ and $m \leq q-1 = \frac{e}{3}$, i.e. $m = \frac{e}{3} = q-1$.
    Since $\gcd( q, e ) = 1$, we must have $3 \nmid q$.
    Thus we are in case (ii).

\item[$i_2 = 1$:]
    Then $30 < e = q + i_1$ and $\frac{q-3}{2} \leq i_1 < q$, so that $q \geq 16$.

    If $i_1 = \frac{q-3}{2}$ then $q$ is odd, $e = q + \frac{q-3}{2} = \frac{3q-3}{2}$ and $m \leq \frac{q-1}{2} = \frac{e}{3}$, i.e. $m = \frac{e}{3} = \frac{q-1}{2}$.
    Since $\gcd(q,e) = 1$ we also have $3 \nmid q$, so that $\gcd(q,6) = 1$.
    Thus we are in case (iii).

    If $i_1 = \frac{q-2}{2}$ then $q$ is even and $e = \frac{3q-2}{2}$.
    Since $\gcd(q,e) = 1$ this implies $4 \mid q$.
    Then $e^2 = 2q^2 + \frac{q-12}{4} q + 1$.
    This leads to the contradiction
    $m \leq 2 + \frac{q-12}{4} + 1 = \frac{q}{4} < \frac{e}{3}$.

    If $i_1 = \frac{q-1}{2}$ then $q$ is odd and $30 < e = \frac{3q-1}{2}$, i.e. $q > 20$.
    If $q \equiv 1 \pmod{4}$ then
    $\frac{3q-3}{2} e = 2q^2 + \frac{q-13}{4}q + \frac{q+3}{4}$.
    This leads to the contradiction
    $m \leq 2 + \frac{q-13}{4} + \frac{q+3}{4} = \frac{q-1}{2} < \frac{e}{3}$.
    If $q \equiv 3 \pmod{4}$ then
    $\frac{3q-5}{2} e = 2q^2 + \frac{q-19}{4} q + \frac{q+5}{4}$.
    This leads to the contradiction
    $m \leq 2 + \frac{q-19}{4} + \frac{q+5}{4} = \frac{q-3}{2} < \frac{e}{3}$.

    If $i_1 = \frac{q}{2}$ then $q$ is even and $e = \frac{3q}{2}$.
    Since $\gcd(q,e) = 1$ this implies $4 \nmid q$.
    Then $\frac{3q-2}{2} e = 2q^2 + \frac{q-6}{4} q$.
    This leads to the contradiction
    $m \leq 2 + \frac{q-6}{4} = \frac{q+2}{4} < \frac{e}{3}$.

    If $\frac{q+1}{2} \leq i_1 \leq q-5$ then set $x:= q-i_1$
    and write $q = ax+k$ where $a, k \in \mathbb{N}_0$ and $0 \leq k < x$.
    Then $5 \leq x \leq \frac{q-1}{2}$ and $a \geq 2$.
    Moreover, we have $ae = 2aq-ax = k+(2a-1)q$.
    Thus $m \leq s_q(ae) \leq k + (2a-1)$.
    This implies:
    \[
       3k+6a-3 \geq 3m \geq e = 2q-x = 2(ax+k)-x = 2k+(2a-1)x.
    \]
    Hence $6a-3 \geq (2a-1)x - k > (2a-2) x \geq 10a-10$,
    and we have the contradiction $4a<7$.

    If $i_1 = q-4$ then $e = 2q-4$. Since $\gcd(q,e) =1$ this implies that $q$ is odd.   
    If $q \equiv 0 \pmod{3}$ then
    $\frac{2q}{3} e = q^2 + \frac{q-9}{3} q + \frac{q}{3}$.
    This leads to the contradiction
    $m \leq 1 + \frac{q-9}{3} + \frac{q}{3} = \frac{2q-6}{3} < \frac{e}{3}$.
    If $q \equiv 1 \pmod{3}$ then
    $\frac{2q+1}{3} e = q^2 + \frac{q-7}{3} q + \frac{q-4}{3}$.
    This leads to the contradiction
    $m \leq 1 + \frac{q-7}{3} + \frac{q-4}{3} = \frac{2q-8}{3} < \frac{e}{3}$.
    If $q \equiv 2 \pmod{3}$ then
    $\frac{2q+2}{3} e = q^2 + \frac{q-5}{3} q  + \frac{q-8}{3}$.
    This leads to the contradiction
    $m \leq 1 + \frac{q-5}{3} + \frac{q-8}{3} = \frac{2q-10}{3} < \frac{e}{3}$.

    If $i_1 = q-3$ then $e = 2q-3$.
    Since $\gcd(q,e) = 1$ we conclude that $q \not\equiv 0 \pmod{3}$.
    If $q \equiv 1 \pmod{3}$ then
    $\frac{2q-2}{3} e = q^2 + \frac{q-10}{3} q + 2$.
    This leads to the contradiction
    $m \leq 1 + \frac{q-10}{3} + 2 = \frac{q-1}{3} < \frac{e}{3}$.
    If $q \equiv 2 \pmod{3}$ then
    $\frac{2q-1}{3} e = q^2 + \frac{q-8}{3}q + 1$.
    This leads to the contradiction
    $m \leq 1 + \frac{q-8}{3} + 1 = \frac{q-2}{3} < \frac{e}{3}$.

    If $i_1 = q-2$ then $e = 2q-2$ and $m \leq 1 + (q-2) = q-1$.
    Since $\gcd(q,e) = 1$ we conclude that $q$ is odd.
    Thus $q-1 = \gcd(e,q-1) \mid m$, so that $m = q-1$.
    Thus we are in case (i).

    If $i_1 = q-1$ then $e = 2q-1$.
    If $q \equiv 0 \pmod{3}$ then
    $\frac{2q-3}{3} e = q^2 + (\frac{q}{3} - 3) q + (\frac{q}{3} + 1)$.
    This leads to the contradiction
    $m \leq 1 + (\frac{q}{3} -3) + (\frac{q}{3} +1) =
    \frac{2q-3}{3} < \frac{e}{3}$.
    If $q \equiv 1 \pmod{3}$ then
    $\frac{2q-2}{3} e = q^2 + \frac{q-7}{3} q + \frac{q+2}{3}$.
    This leads to the contradiction
    $m \leq 1 + \frac{q-7}{3} + \frac{q+2}{3} = \frac{2q-2}{3} < \frac{e}{3}$.
    If $q \equiv 2 \pmod{3}$ then
    $\frac{2q-4}{3} e = q^2 + \frac{q-11}{3} q + \frac{q+4}{3}$.
    This leads to the contradiction
    $m \leq 1 + \frac{q-11}{3} + \frac{q+4}{3} = \frac{2q-4}{3} < \frac{e}{3}$.
\end{itemize}
This finishes the proof.
\end{proof}

Next we drop the assumption $q<e$ from Proposition~\ref{prop_large_m_relative_to_e}. 
If $q \equiv 1 \pmod{e}$ then $m(q,e) = e \geq \frac{e}{3}$ by
\cite[Example~\refPartI{example_m_trivial}]{BHHK1}.
Thus we can and will ignore this case.

\begin{prop}\label{q_bigger_e}
 Let $q,e \in \mathbb{N}$ such that $\gcd(q,e) = 1 \not\equiv q \pmod{e}$. Then $m := m(q,e) \geq \frac{e}{3}$ if and only if
one of the following holds:

\begin{itemize}
\item[(i)]
    $q \geq 3$, $\gcd(2,q) = 1$,
    $e = \frac{2q-2}{k}$ where $k$ is an odd divisor of $q-1$
    (where $m = \frac{e}{2}$).
\item[(ii)]
    $q \geq 4$, $\gcd(3,q) = 1$,
    $e = \frac{3q-3}{k}$ where $k$ is a divisor of $q-1$
    with $k \equiv 1 \pmod{3}$
    (where $m = \frac{e}{3}$).
\item[(iii)]
    $q \geq 5$, $\gcd(3,q) = 1$,
    $e = \frac{3q-3}{k}$ where $k$ is a divisor of $q-1$
    with $k \equiv 2 \pmod{3}$
    (where $m = \frac{e}{3}$).
\item[(iv)]
    $q \equiv b \pmod{e}$, and the pair $(b,e)$ appears in the following table:
\begin{center}
\begin{tabular}{c|ccc|cc|ccc|c}
$b$ & $2$ & $2$ & $2$ & $3$ & $3$ & $4$ & $4$ & $4$ & $5$ \cr
$e$ & $3$ & $5$ & $7$ & $5$ & $8$ & $5$ & $7$ & $15$ & $24$ \cr \hline
$m$ & $2$ & $2$ & $3$ & $2$ & $4$ & $2$ & $3$ & $6$ & $8$
\end{tabular}
\end{center}
\end{itemize}
\end{prop}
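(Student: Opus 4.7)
The plan is to reduce to Proposition~\ref{prop_large_m_relative_to_e} via the observation that $m(q,e)$ depends only on $q$ modulo $e$. Indeed, whether a sum $q^{a_1} + \cdots + q^{a_t}$ is divisible by $e$ is a condition on the residues $q^{a_i} \bmod e$, which in turn depend only on $q \bmod e$; so if $b$ is the representative of $q \bmod e$ in $\{0,1,\ldots,e-1\}$, then $m(q,e) = m(b,e)$. The assumption $\gcd(q,e) = 1$ gives $\gcd(b,e) = 1$, and the assumption $q \not\equiv 1 \pmod{e}$ gives $b \neq 1$, so $2 \leq b < e$. This puts $(b,e)$ exactly in the situation of Proposition~\ref{prop_large_m_relative_to_e}.

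For the ``only if'' direction I assume $m(q,e) \geq e/3$ and apply Proposition~\ref{prop_large_m_relative_to_e} to $(b,e)$. In case~(i) of that proposition one has $e = 2b-2$, equivalently $b = e/2 + 1$ with $e$ even; the congruence $q \equiv e/2 + 1 \pmod{e}$ then translates to $2(q-1) = ke$ for a positive integer $k$, which is odd since $q \not\equiv 1 \pmod{e}$, and which divides $q-1$ since it is odd and divides $2(q-1)$. This yields $e = (2q-2)/k$ with $m = e/2$, i.e.\ case~(i) of the present proposition. Cases~(ii) and~(iii) of Proposition~\ref{prop_large_m_relative_to_e} are treated analogously: from $e = 3b-3$ one obtains $3(q-1) = ke$ with $k \equiv 1 \pmod{3}$, and from $e = (3b-3)/2$ one obtains $3(q-1) = ke$ with $k \equiv 2 \pmod{3}$; in both instances $\gcd(k,3) = 1$ forces $k \mid q-1$. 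Case~(iv) transfers verbatim, since the table of Proposition~\ref{prop_large_m_relative_to_e} already lists pairs with the first entry strictly smaller than the second.

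For the ``if'' direction I would reverse each translation: given $(q,e)$ satisfying case~(i), (ii), or~(iii) of the present proposition with the specified divisor $k$ of $q-1$, I would compute $b := q \bmod e$ from $q = 1 + ke/j$ for $j \in \{2,2,3\}$ respectively (using $k$ odd or $k \equiv 1,2 \pmod 3$ to pin down the correct residue class), verify that $(b,e)$ lands in the corresponding case of Proposition~\ref{prop_large_m_relative_to_e}, and read off $m = m(q,e) = m(b,e)$. Case~(iv) needs no work beyond applying the reduction $m(q,e) = m(b,e)$ to the tabulated pairs.

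The main obstacle I anticipate is bookkeeping: one must verify that the parity/coprimality conditions on $q$ in the new cases (odd; coprime to $3$; coprime to $3$) match the stronger conditions on $b$ in Proposition~\ref{prop_large_m_relative_to_e} (odd; coprime to $3$; coprime to $6$) after passing through the relation $q \equiv b \pmod{e}$, and that the divisibility $k \mid q-1$ follows from $ke = j(q-1)$ once the residue of $k$ modulo $j \in \{2,3\}$ is established. None of this is deep, but it must be carried out for each of the three parametric cases separately.
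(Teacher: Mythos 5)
Your proposal is correct and follows essentially the same route as the paper: reduce to the residue $b$ of $q$ modulo $e$ via $m(q,e)=m(b,e)$, apply Proposition~\ref{prop_large_m_relative_to_e} to $(b,e)$, and translate the parametrizations $e=2b-2$, $e=3b-3$, $e=(3b-3)/2$ into $e=(2q-2)/k$ resp.\ $e=(3q-3)/k$ with the stated congruence conditions on $k$. The only detail worth adding in the ``if'' direction is the degenerate case $b=2$, $e=3$ arising in case~(ii), which lands in case~(iv) of Proposition~\ref{prop_large_m_relative_to_e} rather than case~(ii) but still gives $m=2\geq e/3$ -- exactly the bookkeeping you anticipate.
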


\begin{proof}
Suppose first that $m := m(q,e) \geq \frac{e}{3}$,
and write $q = ae+b$ where $a,b \in \N_0$ and $0 \leq b < e$.
If $b=0$ then $1 = \gcd(q,e) = e$ which is impossible since
$q \not\equiv 1 \pmod{e}$.
Thus $1 < b < e$.
Since $\gcd(b,e) = \gcd(q,e) = 1$ and $m(b,e) = m(q,e) \geq \frac{e}{3}$ 
Proposition~\ref{prop_large_m_relative_to_e} applies, with
$b$ instead of $q$. We discuss the various cases.

(i) Let $b \geq 3$, $\gcd(2,b) = 1$, $e = 2b-2$, $m = \frac{e}{2} = b-1$. Then $q \geq b \geq 3$, and $\gcd(2,q) = 
\gcd(2,b) = 1$ since $2 \mid e$. Moreover, $e = 2(q-ae)-2$, so that $e(1+2a) = 2q-2$ and $e = \frac{2q-2}{1+2a}$. Thus we
are in case (i) of Proposition~\ref{q_bigger_e}.

(ii) Let $b \geq 4$, $\gcd(3,b) = 1$, $e = 3b-3$, $m = \frac{e}{3} = b-1$. Then $q \geq b \geq 4$, and $\gcd(3,q) = 
\gcd(3,b) = 1$ since $3 \mid e$. Moreover, $e = 3(q-ae)-3$, so that $e(1+3a) = 3q-3$ and $e = \frac{3q-3}{1+3a}$. Thus we
are in case (ii) of Proposition~\ref{q_bigger_e}.

(iii) Let $b \geq 5$, $\gcd(6,b) = 1$, $e = \frac{3b-3}{2}$, $m = \frac{e}{3} = \frac{b-1}{2}$. Then $q \geq b \geq 5$,
and $\gcd(3,q) = \gcd(3,b) = 1$ since $3 \mid e$. Moreover, $2e = 3(q-ae)-3$, so that $e(2+3a) = 3q-3$ and $e = 
\frac{3q-3}{2+3a}$. Thus we are in case (iii) of Proposition~\ref{q_bigger_e}.

(iv) If one of these cases holds for $b$ then we are clearly in the corresponding case of Proposition~\ref{q_bigger_e}.

Now suppose, conversely, that we are in one of the cases of Proposition~\ref{q_bigger_e}.

(i) Let $q \geq 3$, $\gcd(2,q) = 1$ and $e = \frac{2q-2}{k}$ where $k$ is an odd divisor of $q-1$. We write $k = 2a+1$
with $a \in \mathbb{N}_0$. Then $e(2a+1) = 2q-2$, so that $e+2 = 2(q-ae)$ and $b := q-ae = \frac{e+2}{2} \in \mathbb{N}$.
If $e=2$ then we obtain the contradiction $k = q-1 \equiv 0 \pmod{2}$. Thus $e \geq4$ and $3 \leq b < e$. Moreover,
$\gcd(b,e) = \gcd(q,e) = 1$. Since $2 \mid e$ this implies $\gcd(2,b) = 1$. Hence we are in case (i) of Proposition~\ref{prop_large_m_relative_to_e},
with $b$ instead of $q$. In particular, $m(q,e) = m(b,e) = \frac{e}{2} \geq \frac{e}{3}$. 

(ii) Let $q \geq 4$, $\gcd(3,q) = 1$ and $e = \frac{3q-3}{k}$ where $k$ is a divisor of $q-1$ with $k \equiv 1 \pmod{3}$.
We write $k = 3a+1$ with $a \in \mathbb{N}_0$. Then $e(3a+1) = 3q-3$, so that $e+3 = 3(q-ae)$ and $b := q-ae = \frac{e+3}{3}
\in \mathbb{N}$; in particular, $\gcd(b,e) = \gcd(q,e) = 1$. Since $3 \mid e$ this implies $\gcd(3,b) = 1$. Obviously,
$1 < \frac{e}{3}+1<e$.

If $b=2$ then $e=3$. Thus we are in case (iv) of Proposition~\ref{prop_large_m_relative_to_e}, with $b$ instead of $q$. Hence $m(q,e) = m(b,e) = 2 \geq
\frac{e}{3}$.

Thus we may assume $b \geq 4$. Then we are in case (ii) of Proposition~\ref{prop_large_m_relative_to_e}, with $b$ instead of $q$. Hence $m(q,e) =
m(b,e) = \frac{e}{3}$.

(iii) Let $q \geq 5$, $\gcd(3,q) = 1$ and $e = \frac{3q-3}{k}$ where $k$ is a divisor of $q-1$ with $k \equiv 2 \pmod{3}$.
We write $k = 3a+2$ with $a \in \mathbb{N}_0$. Then $e(3a+2) = 3q-3$, so that $2e+3 = 3(q-ae)$ and $b := q-ae =
\frac{2e+3}{3} \in \mathbb{N}$; in particular, $\gcd(b,e) = \gcd(q,e) = 1$. Since $3 \mid e$ this implies $\gcd(3,b) = 1$.
Since $b \equiv 3b = 2e+3 \equiv 1 \pmod{2}$ we even have $\gcd(6,b) = 1$. Since $b = \frac{2e}{3} + 1 > 1$ this implies
$b \geq 5$ and $e \geq 6$. Thus $b = \frac{2e+3}{3} < e$. Hence we are in case (iii) of Proposition~\ref{prop_large_m_relative_to_e}, with $b$ instead
of $q$. Hence $m(q,e) = m(b,e) = \frac{e}{3}$. 

(iv) Let $q \equiv 2 \pmod{e}$ where $e=3$. Then $m(q,e) = m(2,3) = 2 \geq \frac{e}{3}$. 

The other cases are similar.
\end{proof}

Next we investigate the situation in the case where $\ord_e(q) = 2$. 
(The case $\ord_e(q) = 1$, i.e. $q \equiv 1 \pmod{e}$,
is part of \cite[Example~\refPartI{example_m_trivial}]{BHHK1}.)

\begin{prop}\label{m_eq_e1}
Let $q,e \in \N$ such that $q>1$ and $e \mid q^2-1$, and set
$e_1 := \gcd(e,q-1)$, $e_2 := \gcd(e,q+1), m := m(q,e)$.
If $e_1 \geq e_2$ or both $e$ and $\frac{q^2-1}{e}$ are even then $m=e_1$.
Otherwise $m=2e_1$.
\end{prop}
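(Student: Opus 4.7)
The plan is to parametrize each $k \in \{1, \ldots, z\}$ with $z = (q^2-1)/e$ by its $q$-adic expansion $ke = i_0 + q i_1$, where $0 \le i_0, i_1 \le q-1$, so that $s_q(ke) = i_0 + i_1$ and $m = \min_{k=1,\ldots,z} s_q(ke)$. By \cite[Lemma~\refPartI{prop_m_and_e1}~(i)]{BHHK1} we have $e_1 \mid m$, hence $m \ge e_1$. For the opposite bound I would exhibit the pair $(i_0, i_1) = (e_1, e_1)$: setting $u := (q-1)/e_1$ and $f := e/e_1$, the identity $e_1 = \gcd(e, q-1)$ forces $\gcd(u, f) = 1$, and combining this with $e \mid (q-1)(q+1) = e_1 u (q+1)$ yields $f \mid q+1$ (so in particular $f \mid e_2$). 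Therefore $e \mid e_1(q+1)$ and the pair $(e_1, e_1)$ is admissible, giving $m \le 2 e_1$. Thus $m \in \{e_1, 2 e_1\}$.

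To decide which value occurs, I would look for $(i_0, i_1) = (e_1 - j, j)$ with $0 \le j \le e_1$ realizing $s_q(ke) = e_1$; the condition $e \mid e_1(1 + uj)$ reduces to $uj \equiv -1 \pmod{f}$, which has a unique solution $j^* \in \{0, \ldots, f-1\}$ since $\gcd(u, f) = 1$. Hence $m = e_1$ precisely when $j^* \le e_1$, and $m = 2 e_1$ otherwise.

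In the case $e_1 \ge e_2$ the conclusion is immediate: the chain $f \le e_2 \le e_1$ (using $f \mid e_2$) gives $j^* \le f - 1 < e_1$, so $m = e_1$ unconditionally. The substantive case is $e_1 < e_2$, where I would compute $j^*$ explicitly starting from $e_1 u \equiv -2 \pmod{f}$ (a consequence of $q + 1 = fs$ and $q - 1 = e_1 u$). This congruence forces $\gcd(e_1, f) \in \{1, 2\}$; when $\gcd(e_1, f) = 1$, $f$ is necessarily odd and one obtains $j^* = e_1/2$ for $e_1$ even and $j^* = (e_1 + f)/2$ for $e_1$ odd, while the case $\gcd(e_1, f) = 2$ reduces analogously after cancelling a factor of $2$. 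The inequality $j^* \le e_1$ then becomes a statement about parities and 2-adic valuations, which I would match against ``$e$ and $z = us$ are both even'' by using $e = e_1 f$ together with the observation that $q$ even makes both $e$ and $z$ automatically odd, while $q$ odd forces exactly one of $q \pm 1$ to be divisible by $4$.

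The main obstacle is the sub-case bookkeeping within $e_1 < e_2$: matching the algebraic criterion ``$j^* \le e_1$'' against the parity condition ``$e$ and $z$ are both even'' across all configurations of $\gcd(e_1, f)$ and the parities of $e_1, f, u, s$. Once this equivalence is settled, the conclusion $m = 2 e_1$ in the failure cases is automatic from $e_1 \mid m$, $m > e_1$, and $m \le 2 e_1$.
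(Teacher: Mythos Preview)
Your parametrization is sound and the reduction to the criterion ``$j^*\le e_1$'' is correct: since $q^2\equiv 1\pmod e$, every sum of $e_1$ powers of $q$ is congruent to some $(e_1-j)+qj$, and the congruence $uj\equiv -1\pmod f$ has a unique representative $j^*\in\{0,\dots,f-1\}$, so the criterion captures exactly whether $m=e_1$. Your derivation of $m\in\{e_1,2e_1\}$ is also fine (the paper simply cites this from Part~I). Where your argument remains a proposal rather than a proof is precisely where you say so: the sub-case bookkeeping matching $j^*\le e_1$ to the parity statement is not carried out.

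It may help to observe that this bookkeeping collapses once you compute $e_2$ in terms of $f$. From $\gcd(e_1,e_2)\mid\gcd(q-1,q+1)\mid 2$ one finds: if $e_1$ is odd then $e_2=f$; if $e_1$ is even and $\gcd(e_1,f)=1$ then $e_2=2f$ and $s=(q+1)/f$ is even; if $\gcd(e_1,f)=2$ then $e_2=f$ and both $u,s$ are odd. In the first and third cases $e_1<e_2$ forces $e_1<f$, and your formula gives $j^*=(e_1+f)/2>e_1$; here $e$ or $z=us$ is odd. In the second case $e_1<e_2=2f$ gives $e_1/2<f$, so $j^*=e_1/2\le e_1$; here $e=e_1f$ is even and $z=us$ is even. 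This finishes your argument.

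The paper takes a different route: it invokes \cite[Lemma~\refPartI{prop_m_and_e1}~(iii)]{BHHK1} to dispose of the ``both even'' case in one stroke, and for $e_1>e_2$ builds explicit witnesses $a+q(e_1-a)$ (with a separate $2$-adic argument when $e$ is even) rather than solving a congruence. The converse direction in the paper is close in spirit to yours, working with the representation $ke=a+q(e_1-a)$ directly. Your approach has the advantage of a single unifying criterion; the paper's has the advantage of leaning on Part~I to shortcut one case and avoid the parity tree.
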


\begin{proof}
\cite[Lemma~\refPartI{prop_m_and_e1}]{BHHK1} implies that $m \in
\{e_1,2e_1\}$.

Suppose first that both $e$ and $\frac{q^2-1}{e}$ are even. 
Then $q$ is odd, $e_1$ is even, and $l := \gcd(2,e_1,\frac{q^2-1}{e}) = 2$. 
Thus $m \leq \frac{2e_1}{l} = e_1$ by \cite[Lemma~\refPartI{prop_m_and_e1}~(iii)]{BHHK1},
i.e.\ $m = e_1$ by \cite[Lemma~\refPartI{prop_m_and_e1}~(i)]{BHHK1}.

Suppose next that $e_1 = e_2$. Then $e_1 \mid \gcd(q-1,q+1) \mid 2$, i.e. $e_1
= e_2 \in \{1,2\}$.
Thus $e = e_1 \mid q-1$, and $m = e = e_1$ by
\cite[Example~\refPartI{example_m_trivial}]{BHHK1}.

Now suppose that $e_1 > e_2$. If $e$ is odd then
there is $a \in \{0,\ldots,e_2-1\}$ such that $2a \equiv e_1 \pmod{e_2}$.
Since $e=e_1e_2$ one checks easily that $a+q(e_1-a) \equiv 0 \pmod{e}$. Thus
$m \leq e_1$, i.e.\ $m=e_1$.

If $e$ is even then $q$ is odd, and both $e_1$ and $e_2$ are even. Then
$\frac{e_1 \pm e_2}{2} \in \mathbb{N}$.
We claim that $e$ divides $\frac{e_1+e_2}{2} + q\frac{e_1-e_2}{2} = e_1
\frac{q+1}{2} - e_2 \frac{q-1}{2}$.
(Then $m \leq e_1$, i.e. $m = e_1$.)
We write $e_1 = a_1(q-1) + b_1e$ and $e_2 = a_2(q+1) + b_2e$ with
$a_1,a_2,b_1,b_2 \in \mathbb{Z}$.
Then
$$e_1 \frac{q+1}{2} - e_2\frac{q-1}{2} \equiv (a_1-a_2)\frac{q^2-1}{2}
\pmod{e}.$$
By the first part of the proof, we may assume that $\frac{q^2-1}{e}$ is odd.
Then $q^2-1$ and $e$ have equal $2$-parts. Thus the $2$-parts of
$e_1$ and $q-1$ are equal and smaller than the $2$-part of $e$. Hence $a_1$ is
odd.
Similarly, the $2$-parts of $e_2$ and $q+1$ are equal and smaller than the
$2$-part of $e$.
Thus $a_2$ is also odd. However, then $a_1-a_2$ is even, and our
claim follows. This finishes the proof of our first assertion.

In order to prove the second assertion, suppose that
$m = e_1 < e_2$, and let $k \in \{ 1, \ldots, \frac{q^2-1}{e} \}$ such that
$s_q(ke) = m$.
Then $ke = a + q(e_1-a)$ for some $a \in \{0, \ldots,e_1\}$. Hence
$0  \equiv keq \equiv (e_1-a)+qa \equiv e_1 + (q-1)a \pmod{e}$;
in particular, $0 \equiv e_1 +(q-1)a \equiv e_1 - 2a \pmod{e_2}$, and $-e_1
\leq e_1-2a \leq e_1$.
Thus $e_1=2a$ and $e$ are even. Moreover, we have
$0 \equiv 2a + (q-1)a \equiv (q+1)a \pmod{e}.$
Thus $e \mid (q+1)a = (q+1) \frac{e_1}{2}$, so that $2e \mid (q+1)e_1 \mid
q^2-1$. Hence $\frac{q^2-1}{e}$
is even.
\end{proof}

\begin{example}
Table~\ref{table_meq_middle} gives the values $m(q,e)$
for some larger values of $q$ and $e$.
\end{example}

Next we present some infinite series of examples where $m(q,e)$ can be calculated directly.
The following result is illustrated by
Table~\ref{table_meq_small} and Table~\ref{table_meq_middle}.

\begin{prop}\label{m_for_two_power}
Let $e = 2^k$ for some $k \in \N$ with $k \geq 3$. Moreover, let $q \in \N$ be odd. Then
$$m(q,e) = \left\{ \begin{array}{r@{\,,\quad}l}
                    \gcd(e,q-1) & \hbox{if } q \equiv 1 \pmod{4}, \\
                    2 & \hbox{if } q \equiv -1 \pmod{e}, \\
                    4 & \hbox{otherwise}. 
                   \end{array} \right. $$
\end{prop}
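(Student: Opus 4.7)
The plan is to distinguish three cases according to the congruence class of $q$ modulo $4$ and modulo $e$.

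The case $q \equiv -1 \pmod e$ is immediate from Example~\ref{example_m_eq_2}.

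For $q \equiv 1 \pmod 4$, I would set $e_1 := \gcd(e, q-1) = 2^{\min(k, v_2(q-1))}$. The lower bound $m(q,e) \geq e_1$ is \cite[Lemma~\refPartI{prop_m_and_e1}]{BHHK1}, and the subcase $v_2(q-1) \geq k$ (where $e_1 = e$ and $q \equiv 1 \pmod e$) is handled by Example~\refPartI{example_m_trivial}. Otherwise the key observation is that the cyclic subgroup $\langle q \rangle \leq (\Z/e\Z)^\times$ coincides with $\{u : u \equiv 1 \pmod{e_1}\}$: by lifting-the-exponent for the prime $2$ its order is $2^{k - v_2(q-1)}$, matching that of the larger subgroup it is contained in. Consequently $1 - e_1$, which is odd and $\equiv 1 \pmod{e_1}$, equals $q^a$ modulo $e$ for some $a \geq 1$, so the sum $q^a + (e_1 - 1) q^0$ is a sum of $e_1$ powers of $q$ divisible by $e$ and yields $m(q,e) \leq e_1$.

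For the remaining case $q \equiv 3 \pmod 4$ and $q \not\equiv -1 \pmod e$, set $s := v_2(q+1)$, so $2 \leq s < k$. The lower bound $m(q,e) \geq 4$ rests on two observations: $m$ must be even, since any sum of an odd number of odd powers of $q$ is odd and hence not divisible by the even integer $e$; and $m \neq 2$, because $m = 2$ would force $-1 \in \langle q \rangle$, but the decomposition $(\Z/e\Z)^\times \cong \langle -1 \rangle \times \langle 5 \rangle$ together with $q \in -\langle 5 \rangle$ forces this only when $q \equiv -1 \pmod e$, which is excluded.

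The upper bound $m \leq 4$ is the main obstacle. The idea is to exhibit an explicit witness of the form $q^0 + q^0 + q^1 + q^a \equiv 0 \pmod e$, equivalently to find $a$ with $q^a \equiv -2-q \pmod e$. Two facts suffice: first, $-2 - q \equiv q \pmod{2^{s+1}}$, since $(-2-q) - q = -2(1+q)$ has $2$-adic valuation at least $s+1$; second, $\langle q \rangle$ coincides with the subgroup $\{u \in (\Z/e\Z)^\times : u \equiv 1 \text{ or } q \pmod{2^{s+1}}\}$, as both are subgroups containing $q$ and have order $2^{k-s}$ by LTE. Together these yield $-2 - q \in \langle q \rangle$, producing the required $a$.
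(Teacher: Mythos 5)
Your proof is correct and follows essentially the same strategy as the paper's: identify $\langle q+e\Z\rangle$ as a congruence-defined subgroup of $(\Z/2^k\Z)^\times$, get the lower bounds from $\gcd(e,q-1)\mid m$ (or parity) together with the criterion $m=2\iff -1\in\langle q+e\Z\rangle$, and exhibit an explicit short sum of powers for the upper bound. The only differences are cosmetic: you compute $\ord_e(q)$ via lifting-the-exponent where the paper uses the congruence $5^{2^t}\equiv 1+2^{t+2}\pmod{2^{t+3}}$ after reducing to $q\equiv\pm 5^{2^r}$, and in the third case your witness $2+q+q^a$ replaces the paper's $1+q^d+2q^c$.
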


\begin{proof}
It is well-known that
$(\Z/e\Z)^\times = \langle -1+e\Z \rangle \times \langle 5+e\Z \rangle$
and $\ord_e(5) = 2^{k-2}$, $\ord_e(-1) = 2$.
Thus
$$\langle 5+e\Z \rangle = \{b+e\Z \in \Z/e\Z: b \equiv 1 \pmod{4}\}.$$

We also recall the well-known formula 
$$5^{2^t} \equiv 1 + 2^{t+2} \pmod{2^{t+3}}\quad \hbox{for} \quad t \in \N_0.$$
(i)
   Let $q \in \N$ such that $q \equiv 1 \pmod{4}$.
   Then $q \equiv 5^{2^ra} \pmod{e}$ where
   $r \in \{0,1,\ldots,k-2\}$
   and $a \in \N$ is odd,
   so that $\ord_e(q) = 2^{k-2-r}$.
   By \cite[Lemma~\refPartI{same_m_for_same_subgroup}]{BHHK1},
   we have $m:= m(q,e) = m(5^{2^ra},e) = m(5^{2^r},e)$,
   and it is easy to see that $e_1:= \gcd(e,q-1) = \gcd(e,5^{2^ra}-1) = \gcd(e,5^{2^r}-1)$.
   Thus we may assume that $q = 5^{2^r}$.
   Then
   \[
      \langle 5^{2^r} + e\Z \rangle =
         \{b+e\Z \in \Z/e\Z: b \equiv 1 \pmod{2^{r+2}} \},
   \]
   and $e_1 = 2^{r+2}$;
   in particular, $2^{r+2} = e_1 \mid m$
   by \cite[Lemma~\refPartI{prop_m_and_e1}]{BHHK1}.
   It remains to show that $m \leq 2^{r+2}$. 

   By the description of $\langle 5^{2^r}+e\Z \rangle$ above,
   there is $c \in \{0,1,\ldots,2^{k-2-r}-1\}$
   such that $(5^{2^r})^c \equiv 1 - 2^{r+2} \pmod{e}$. 
   Then $e = 2^k \mid q^c + 2^{r+2} -1$,
   and we conclude that $m(q,e) \leq s_q(q^c+2^{r+2}-1) \leq 2^{r+2}$.

(ii) 
   Let $q \in \N$ such that $q \equiv -1 \pmod{e}$. Then $m(q,e) = 2$ by \cite[Lemma~\refPartI{same_m_for_same_subgroup}]{BHHK1}
   and Example~\ref{example_m_eq_2}.

(iii) Let $q \in \N$ such that $q \equiv 3 \pmod{4}$
   and $q \not\equiv -1 \pmod{2^k}$.
   Then $e_1 := \gcd(e,q-1) = 2$.
   Thus $m := m(q,e)$ is even by \cite[Lemma~\refPartI{prop_m_and_e1}]{BHHK1}.
   Since $-1+e\Z \notin \langle q+e\Z \rangle$
   \cite[Lemma~\refPartI{lem_m_eq_2}]{BHHK1} implies that $m>2$,
   i.e. $m \geq 4$, and it remains to show that $m \leq 4$. 

As above, we have $q \equiv -5^{2^ra} \pmod{e}$
where $r \in \{ 0, 1, \ldots, k-3 \}$ and $a \in \N$ is odd.
Moreover, we may assume again that $q \equiv -5^{2^r} \pmod{e}$.
The formula above implies that
\begin{eqnarray*}
   \langle q + e\Z \rangle & = & \langle -5^{2^r}+e\Z \rangle \\
   & = & \{ b + e\Z \in \Z/e\Z: b \equiv -1-2^{r+2} \pmod{2^{r+3}} \}
     \cup \\
   &   & \{ b + e\Z \in \Z/e\Z: b \equiv 1 \pmod{2^{r+3}} \},
\end{eqnarray*}
where the first subset contains those powers of $q + e\Z$
where the exponents are odd
and the second subset contains the powers with even exponents.
Thus there are $c, d \in \{ 0, 1, \ldots, 2^{k-2-r}-1 \}$ such that
\[
   q^c \equiv -1-2^{r+2} \pmod{e} \hbox{ and } q^d \equiv 1 + 2^{r+3} \pmod{e}.
\]
Note that $c$ is odd and $d$ is even,
and that
$2 q^c + q^d + 1 \equiv -2 - 2^{r+3} + 1 + 2^{r+3} + 1 \equiv 0 \pmod{e}$
and $m(q,e) \leq s_q(2q^c+q^d+1) \leq 4$.
\end{proof}

Now we turn to the situation where $e$ is a power of an odd prime.

\begin{prop}\label{prop_m_equals_e1}
Let $e = p^k$ where $p$ is an odd prime and $k \in \N$.
Moreover, let $q \in \N$ such that $q \equiv 1 \pmod{p}$.
Then $m(q,e) = \gcd(e,q-1)$.
\end{prop}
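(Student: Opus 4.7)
The plan is to show $m(q,e) = e_1$, where $e_1 := \gcd(e,q-1)$, by establishing the two inequalities separately. The lower bound $m(q,e) \geq e_1$ is immediate from \cite[Lemma~\refPartI{prop_m_and_e1}]{BHHK1}, which states that $e_1$ divides $m(q,e)$; so the remaining task is to exhibit a sum of exactly $e_1$ powers of $q$ that is divisible by $e$.

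Write $q-1 = p^j u$ with $\gcd(u,p)=1$, where $j \geq 1$ by hypothesis. If $j \geq k$ then $e \mid q-1$, so $q \equiv 1 \pmod{e}$ and \cite[Example~\refPartI{example_m_trivial}]{BHHK1} gives $m(q,e) = e = e_1$ directly. From here on I would assume $j < k$, so that $e_1 = p^j$ and the goal becomes to find $p^j$ powers of $q$ summing to $0$ modulo $e$.

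The key step is to identify the cyclic subgroup $\langle q+e\Z \rangle$ inside $(\Z/e\Z)^\times$. Because $p$ is odd and $p \mid q-1$, the standard lifting-the-exponent identity $v_p(q^n-1) = v_p(q-1) + v_p(n)$ gives $\ord_e(q) = p^{k-j}$. The Sylow $p$-subgroup of $(\Z/e\Z)^\times$ has order $p^{k-1}$ and is cyclic, so $\langle q+e\Z \rangle$ is its unique subgroup of order $p^{k-j}$, namely $\{a+e\Z : a \equiv 1 \pmod{p^j}\}$.

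Since $1-p^j \equiv 1 \pmod{p^j}$, the previous identification produces an integer $c \geq 0$ with $q^c \equiv 1-p^j \pmod{e}$. Then the sum $q^c + (p^j-1) \cdot q^0 \equiv (1-p^j)+(p^j-1) = 0 \pmod{e}$ exhibits $p^j = e_1$ powers of $q$ with the required divisibility property, yielding $m(q,e) \leq e_1$ and completing the proof. The only non-routine ingredient is the order computation $\ord_e(q) = p^{k-j}$, which genuinely uses that $p$ is odd; for $p=2$ the analogous identity fails, and correspondingly the answer is more intricate, as reflected in Proposition~\ref{m_for_two_power}.
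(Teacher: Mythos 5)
Your proof is correct and follows essentially the same route as the paper: both identify $\langle q+e\Z\rangle$ as $\{b+e\Z : b \equiv 1 \pmod{p^j}\}$ with $p^j = \gcd(e,q-1)$, pick $c$ with $q^c \equiv 1-p^j \pmod{e}$, and conclude from $q^c + (p^j-1)q^0 \equiv 0 \pmod e$ together with the divisibility $e_1 \mid m(q,e)$. The only cosmetic difference is that you pin down the subgroup via lifting-the-exponent and uniqueness of subgroups of a cyclic group, whereas the paper writes $q$ as a power of the generator $1+p$ and uses the congruence $(1+p)^{p^{t-1}} \equiv 1+p^t \pmod{p^{t+1}}$.
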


\begin{proof}
The hypothesis $q \equiv 1 \pmod{p}$ implies that $q+e\Z$ is a $p$-element
in $(\Z/e\Z)^\times$.
Since $p$ is odd the Sylow $p$-subgroup of $(\Z/e\Z)^\times$ is generated by
$1+p+e\Z$.
Since $\varphi(e) = p^{k-1}(p-1)$
we have $q \equiv (1+p)^{p^ra} \pmod{e}$ where $r \in \{0,1,\ldots,k-1\}$
and $a \in \N \setminus p\N$.
\cite[Lemma~\refPartI{same_m_for_same_subgroup}]{BHHK1} implies that
$m(q,e) = m((1+p)^{p^r},e)$.
Since
\[
   (1+p)^{p^ra} - 1 =
      \left( (1+p)^{p^r}-1 \right)
      \left( (1+p)^{p^r(a-1)} + \ldots + (1+p)^{p^r} + 1 \right)
\]
where the second factor is not divisible by $p$ we also have
\[
   \gcd(e,q-1) = \gcd(e,(1+p)^{p^r}).
\]
Thus we may assume that $q \equiv (1+p)^{p^r} \pmod{e}$
for some $r \in \{0,1,\ldots,k-1\}$.
We recall that
\[
   (1+p)^{p^{t-1}} \equiv
     1 + p^t \pmod{p^{t+1}} \quad \hbox{for} \quad t \in \N.
\]
Thus
\[
   \langle (1+p)^{p^r} + e \Z \rangle =
    \{b + e \Z \in \Z/e\Z: b \equiv 1 \pmod{p^{r+1}} \}.
\]
Since $\ord_e((1+p)^{p^r}) = p^{k-1-r}$ there is
$c \in \{0,1,\ldots,p^{k-1-r} -1 \}$ such that
\[
   \left( (1+p)^{p^r}\right)^c \equiv
   1 - p^{r+1} \pmod{e}, \quad \hbox{i.e.} \quad e \mid q^c+p^{r+1}-1.
\]
Thus $m(q,e) \leq s_q(q^c + p^{r+1} - 1) \leq p^{r+1} = \gcd(e,q-1)$.
Now \cite[Lemma~\refPartI{prop_m_and_e1}]{BHHK1} implies the result.
\end{proof}

\begin{cor}\label{pierpont_prime_cor}
Let $e = p^k$ where $p$ is an odd prime of the form $p = 1 + 2^a 3^b$
for some nonnegative integers $a$, $b$, and $k \in \N$.
Then
\[
   m(q,e) = \left\{
              \begin{array}{rl}
                \gcd(e, q-1), & \hbox{ if $\ord_e(q)$ is a power of $p$}, \\
                           2, & \hbox{ if $\ord_e(q)$ is even}, \\
                           3, & \hbox{ otherwise}.
              \end{array}
            \right. 
\]
\end{cor}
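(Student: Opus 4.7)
The plan is to split into the three cases listed in the statement, exploiting the fact that $(\Z/e\Z)^\times$ is cyclic of order $\varphi(p^k) = p^{k-1}(p-1) = p^{k-1} \cdot 2^a \cdot 3^b$, so every prime divisor of $\ord_e(q)$ lies in $\{p,2,3\}$. This makes the three cases exhaustive and disjoint: either $\ord_e(q)$ is a pure $p$-power, or it is even, or it is odd with $3 \mid \ord_e(q)$; the last case furthermore forces $p \neq 3$ (since $p = 3$ would absorb the factor $3$ back into the $p$-part).

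In the first case, $q+e\Z$ lies in the Sylow $p$-subgroup of $(\Z/e\Z)^\times$, which forces $q \equiv 1 \pmod p$, and Proposition~\ref{prop_m_equals_e1} immediately yields $m(q,e)=\gcd(e,q-1)$. In the second case, the cyclic group $\langle q+e\Z\rangle$ contains the unique involution $-1+e\Z$ of $(\Z/e\Z)^\times$, so $q^j \equiv -1 \pmod e$ for some $j$. Hence $q^j+1$ is a sum of two powers of $q$ divisible by $e$, giving $m(q,e) \le 2$, and $m(q,e) \ge 2$ because $e>1$. (Alternatively, combine Example~\ref{example_m_eq_2} applied to $q^j$ with the fact that $m$ depends only on the generated subgroup.)

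The main work is in the third case. Since $(\Z/e\Z)^\times$ is cyclic, it contains a unique subgroup of order $3$, and this subgroup is contained in $\langle q+e\Z\rangle$ because $3 \mid \ord_e(q)$. Pick $\omega \in \{1,\ldots,e-1\}$ of multiplicative order $3$ with $\omega \equiv q^j \pmod e$ for some $j$. The goal is to show that
\[
q^{2j} + q^j + 1 \equiv \omega^2 + \omega + 1 \equiv 0 \pmod{p^k},
\]
which will give $m(q,e) \le 3$; combined with $m(q,e) > 2$ (since $\ord_e(q)$ being odd rules out $-1 \in \langle q+e\Z\rangle$, hence any sum of two powers being divisible by $e$) and $m(q,e) > 1$ (since $e > 1$), this forces $m(q,e) = 3$.

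The one step requiring care, and where the Pierpont hypothesis really enters through the prime $3$, is the congruence $\omega^2+\omega+1 \equiv 0 \pmod{p^k}$. I would deduce this from the factorization $\omega^3 - 1 = (\omega-1)(\omega^2+\omega+1) \equiv 0 \pmod{p^k}$ by showing that $\omega-1$ is a unit modulo $p^k$. If instead $p \mid \omega-1$, then $\omega+e\Z$ would lie in the kernel of the reduction $(\Z/p^k\Z)^\times \to (\Z/p\Z)^\times$, i.e.\ in the Sylow $p$-subgroup; but $\omega+e\Z$ has order $3$, and $p \neq 3$ in this case, so its order in a $p$-group must be $1$, contradicting $\omega \not\equiv 1 \pmod{p^k}$. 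With $\omega-1$ a unit, the factorization forces the quadratic factor to vanish modulo $p^k$, completing the proof.
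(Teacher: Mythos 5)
Your proof is correct and follows the same case division as the paper's: Proposition~\ref{prop_m_equals_e1} handles the case where $\ord_e(q)$ is a power of $p$, even order gives $m=2$, and in the remaining case $3\mid\ord_e(q)$ forces $m=3$. The only difference is that where the paper cites lemmas from Part~I (that $m(q,e)=2$ if and only if $\ord_e(q)$ is even, and that $3\mid\ord_e(q)$ with the relevant coprimality gives $m(q,e)\le 3$), you prove these facts directly; in particular your argument that $\omega-1$ is a unit modulo $p^k$ (using $p\neq 3$ in this case) is precisely the content of the cited bound $e\mid q^{2j}+q^j+1$.
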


\begin{proof}
We have $m(q,e) = 2$ if and only if $\ord_e(q)$ is even,
by \cite[Lemma~\refPartI{lem_m_eq_2}]{BHHK1}
and \cite[Remark~\refPartI{rem_m_eq_2_cyclic_case}]{BHHK1} (cf. Table 1),
and Proposition~\ref{prop_m_equals_e1} yields $m(q,e) = \gcd(e, q-1)$
if $q + e \Z$ is a $p$-element in $(\Z/e\Z)^{\times}$.
In the remaining cases, $\ord_e(q)$ is divisible by $3$,
thus \cite[Lemma~\refPartI{lem_m_bounded_by_p}]{BHHK1} yields $m(q,e) \leq 3$,
and equality holds because $m(q,e) \not= 2$.
\end{proof}

\begin{rem}\label{pierpont_prime_remark}
(i)
   Primes of the form $1 + 2^a 3^b$ as above are called
   Pierpont primes, see~\cite{Stewart}.
   It is conjectured that there are infinitely many Pierpont primes.

(ii)
   If $e = p^k$ where $k \in \N$ and $p$ is a Fermat prime
   (e.g. $p \in \{3,5,17\}$) then only the first two cases
   in Corollary~\ref{pierpont_prime_cor} occur.
\end{rem}

\begin{lemma}\label{lemma_hensel_lifting}
 Let $e = p^k$ where $p$ is an odd prime and $k \in \N$. If $\ord_e(q) = \frac{\varphi(e)}{d}$ for a divisor $d$ of $p-1$ then $m(q,e) = m(q,p)$. 
\end{lemma}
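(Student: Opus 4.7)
The plan is to prove the two inequalities $m(q,e) \leq m(q,p)$ and $m(q,e) \geq m(q,p)$ separately. The latter is immediate: if $e = p^k$ divides a sum of $t$ powers of $q$, then so does $p$, hence $m(q,p) \leq m(q,e)$.

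For the reverse inequality, the key structural observation is that the hypothesis forces $\langle q + e\Z\rangle$ to contain the full Sylow $p$-subgroup $P_0$ of $(\Z/e\Z)^\times$. Indeed, since $p$ is odd, $(\Z/e\Z)^\times$ is cyclic of order $\varphi(e) = p^{k-1}(p-1)$, and $P_0$ is its unique subgroup of order $p^{k-1}$, consisting precisely of the residues $\equiv 1 \pmod{p}$. By hypothesis, $|\langle q+e\Z\rangle| = p^{k-1}(p-1)/d$ with $\gcd(d,p) = 1$, so this order is divisible by $p^{k-1}$; in a cyclic group this forces $P_0 \subseteq \langle q+e\Z\rangle$. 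Consequently, for every $v \in \Z$ there exists $b \in \Z$ with $q^b \equiv 1 + pv \pmod{e}$.

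Now set $m := m(q,p)$ and choose integers $a_1, \ldots, a_m$ with $S := q^{a_1} + \cdots + q^{a_m} \equiv 0 \pmod{p}$. The strategy is to modify a single exponent: replace $q^{a_1}$ by $q^{a_1 + b_1}$ so that the resulting sum $S'$ is divisible by $p^k$. Writing $q^{b_1} \equiv 1 + pv \pmod{e}$, one computes $S' \equiv S + p v q^{a_1} \pmod{p^k}$, so the requirement $p^k \mid S'$ becomes $v q^{a_1} \equiv -S/p \pmod{p^{k-1}}$, which is uniquely solvable for $v \in \Z/p^{k-1}\Z$ because $\gcd(q,p) = 1$. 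A corresponding $b_1$ then exists by the structural observation, yielding a sum of $m$ powers of $q$ divisible by $e$, hence $m(q,e) \leq m$. The main obstacle is the structural claim $P_0 \subseteq \langle q+e\Z\rangle$; once this is secured, the rest is a direct Hensel-type lift, and the hypothesis $d \mid p-1$ is used precisely to guarantee that the $p$-part of the order of $q$ modulo $e$ is as large as possible.
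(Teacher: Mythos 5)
Your proof is correct, and it follows the same overall strategy as the paper's: prove $m(q,p) \leq m(q,e)$ trivially, and obtain the reverse inequality by lifting a minimal vanishing sum of powers of $q$ from modulo $p$ to modulo $p^k$. The difference lies in how the lift is executed. The paper identifies $\langle q+e\Z\rangle$ with the set of $d$-th powers of units modulo $e$, rewrites the relation $q^{a_1}+\cdots+q^{a_m}\equiv 0 \pmod{p}$ as $x_1^d+\cdots+x_m^d\equiv 0\pmod{p}$, and then invokes Hensel's Lemma (the hypothesis $d\mid p-1$ guaranteeing $p\nmid d$, so the derivative $d\,x_1^{d-1}$ is a unit). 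You instead extract from the hypothesis only the consequence that $p^{k-1}$ divides $\ord_e(q)$, so that $\langle q+e\Z\rangle$ contains the kernel $1+p\Z/p^k\Z$ of reduction modulo $p$, and then carry out the lift by hand: multiply one summand by a suitable $q^{b_1}\equiv 1+pv$ and solve a linear congruence for $v$. Your version is self-contained (no appeal to Hensel's Lemma or to the $d$-th power description of $\langle q+e\Z\rangle$) and makes transparent that the only thing the hypothesis is really used for is $p^{k-1}\mid\ord_e(q)$; the paper's version is shorter on the page because the nontrivial step is delegated to a standard reference. Both modify exactly one summand of the relation.
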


\begin{proof}
 Since $p$ is an odd prime and $k \in \N$, $(\Z/e\Z)^\times$ is cyclic of order $\varphi(e) = p^{k-1}(p-1)$. 
 If $q \in \N$ satisfies $\ord_e(q) = \frac{\varphi(e)}{d}$ for a divisor $d$ of $p-1$ then $\langle q + e\Z \rangle = \{x^d + e\Z: x \in \Z \setminus p\Z\}$. 
 Moreover, $\ord_p(q) = \frac{p-1}{d}$ and $\langle q + p\Z = \{x^d + p\Z: x \in \Z \setminus p\Z \}$. 
 Let $m := m(q,p)$. Then there are $x_1,\ldots,x_m \in \Z \setminus p\Z$ such that $x_1^d + \ldots + x_m^d \equiv 0 \pmod{p}$.
 Thus, by Hensel's Lemma (see II.2.2 in \cite{Serre}, for example), there are $y_1,\ldots,y_m \in \Z$ such that $y_1^d + \ldots + y_m^d \equiv 0 \pmod{e}$ and $y_i \equiv x_i \pmod{p}$
 for $i=1,\ldots,m$; in particular, $y_1,\ldots,y_m \notin p\Z$. This shows that $m(q,e) \leq m = m(q,p)$. 
 Since $m(q,p) \leq m(q,e)$ by \cite[Lemma~\refPartI{same_m_for_same_subgroup}]{BHHK1}, the result follows.
\end{proof}
 
\begin{cor}\label{cor_squares_mod_prime_powers}
 Let $e = p^k$ where $p$ is an odd prime and $k \in \N$. If $\ord_e(q) = \frac{\varphi(e)}{2}$ then
 $$ m(q,e) = \left\{
              \begin{array}{rl}
                           2, & \hbox{ if $p \equiv 1 \pmod{4}$}, \\
                           3, & \hbox{ if $p \equiv 3 \pmod{4}$.} \\
              \end{array}
            \right. $$
\end{cor}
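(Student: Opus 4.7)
The plan is to reduce the question to a statement about $m(q,p)$ by means of Lemma~\ref{lemma_hensel_lifting}. Since $p$ is odd, $d := 2$ is a divisor of $p-1$, and the hypothesis $\ord_e(q) = \varphi(e)/2$ is precisely the hypothesis of that lemma with $d = 2$. This yields $m(q,e) = m(q,p)$ at once. Moreover, the inclusion $\langle q + e\Z \rangle \subseteq (\Z/e\Z)^\times$ has index $2$, and the reduction map $(\Z/e\Z)^\times \to (\Z/p\Z)^\times$ sends it to a subgroup of index $2$, namely the group $Q$ of nonzero squares modulo~$p$.

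Next I would split into the two cases according to the residue of $p$ modulo~$4$. If $p \equiv 1 \pmod{4}$, then $-1$ is a nonzero square, so $-1 \in Q = \langle q + p\Z \rangle$, and therefore $1 + q^c \equiv 0 \pmod{p}$ for some $c$. This gives $m(q,p) \leq 2$; the inequality $m(q,p) \geq 2$ is trivial because $\gcd(q,p) = 1$ and $p > 1$. Hence $m(q,p) = 2$ in this case.

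If $p \equiv 3 \pmod{4}$, then $-1 \notin Q$, so no relation $q^a + q^b \equiv 0 \pmod{p}$ can hold, forcing $m(q,p) \geq 3$. For the matching upper bound I would invoke the classical counting argument: the sets $\{x^2 : x \in \F_p\}$ and $\{-1 - y^2 : y \in \F_p\}$ each have cardinality $(p+1)/2$, so their union cannot fit in $\F_p$, and they must therefore meet. This produces $x, y \in \F_p$ with $x^2 + y^2 \equiv -1 \pmod{p}$, and the assumption $p \equiv 3 \pmod{4}$ forces both $x$ and $y$ to be nonzero (otherwise $-1$ would itself be a square). Then $x^2, y^2 \in Q = \langle q + p\Z \rangle$, so $x^2 \equiv q^a$ and $y^2 \equiv q^b \pmod{p}$ for suitable $a, b$, yielding $1 + q^a + q^b \equiv 0 \pmod{p}$ and hence $m(q,p) \leq 3$.

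The only step requiring any ingenuity is the sum-of-two-squares observation in the case $p \equiv 3 \pmod 4$; the Hensel-lifting reduction and the analysis of $-1$ as a square are routine once Lemma~\ref{lemma_hensel_lifting} is in place.
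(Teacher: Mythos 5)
Your proof is correct and follows essentially the same route as the paper: reduce to $k=1$ via Lemma~\ref{lemma_hensel_lifting}, identify $\langle q+p\Z\rangle$ with the nonzero squares modulo $p$, and use the representation of $-1$ as a sum of two squares to get $m(q,p)\le 3$, with the case distinction governed by whether $-1$ is a square. The only difference is that you inline elementary proofs of two facts the paper simply cites (the characterization of $m=2$ via evenness of $\ord_p(q)$ from Part~I, and the sum-of-two-squares fact from Serre), and you make explicit the small but necessary point that $x$ and $y$ are nonzero when $p\equiv 3\pmod 4$.
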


\begin{proof}
 By the lemma above and its proof, we have $m(q,e) = m(q,p)$ and $\ord_p(q) = \frac{p-1}{2}$. Thus we may assume that $k=1$, i.e. $e=p$. 
 Since $(\Z/p\Z)^\times$ is cyclic of order $\varphi(p) = p-1$, $\langle q + p\Z\rangle$ consists of the squares in $(\Z/p\Z)^\times$. 
 It is well-known that there are $x_1,x_2 \in \Z$ such that $x_1^2 + x_2^2 \equiv -1 \pmod{p}$ (cf. IV.1.7 in 
 \cite{Serre}, for example), i.e. $x_1^2 + x_2^2 + 1^2 \equiv 0 \pmod{p}$. Hence $m(q,p) \leq 3$. Moreover, by \cite[Remark~\refPartI{rem_m_eq_2_cyclic_case}]{BHHK1}, we have
 $m(q,p) = 2$ if and only if $\ord_p(q)$ is even, i.e. if and only if $p \equiv 1 \pmod{4}$.
\end{proof}

\begin{rem}
 Let $e$ be a prime, and let $q \in \N$ such that $\ord_e(q) = \frac{e-1}{k}$ where $k \in \N$ divides $e-1$ and $e > (k-1)^4$. 
 Then, by the main theorem in \cite{Small}, every element in $\Z/e\Z$ (in particular, $-1+e\Z$) is a sum of two $k$-th powers. 
 Since $\langle q+e\Z \rangle$ is the set of all $k$-th powers in $(\Z/e\Z)^\times$, this implies that $0+e\Z$ is a sum of at most three elements in $\langle q + e\Z \rangle$. 
 Thus $m(q,e) \leq 3$.
 
 For example, if $k = 4$ then $m(q,e) \leq 3$ for every prime $e > 81$. 
 Explicit computations show that, for $k = 4$, the only cases where $m(q,e) > 3$ are as follows:
 \begin{itemize}
  \item $e=5$, $q \equiv 1 \pmod{5}$, $m(q,e)=5$;
  \item $e = 29$, $\langle q + 29\Z \rangle = \langle 7 + 29\Z \rangle$, $m(q,e) = 4$.
 \end{itemize}

 \end{rem}

The smallest (odd) prime that is not a Pierpont prime is $p=11$. Here we get the following.

\begin{prop}\label{prop_e_11}
 Let $e = 11^k$ for some $k \in \N$. Then
 \[
   m(q,e) = \left\{
              \begin{array}{rl}
                \gcd(e, q-1), & \hbox{ if $\ord_e(q)$ is a power of $11$}, \\
                           2, & \hbox{ if $\ord_e(q)$ is even}, \\
                           3, & \hbox{ if $\ord_e(q) = 5 \cdot 11^{k-1}$,} \\
                           5, & \hbox{ otherwise}.
              \end{array}
            \right. 
   \]
\end{prop}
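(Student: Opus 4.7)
The plan is to split by $\ord_e(q)$: the cyclic group $(\Z/e\Z)^\times$ has order $\varphi(e) = 10 \cdot 11^{k-1}$, so $\ord_e(q) = d \cdot 11^j$ with $d \mid 10$ and $0 \leq j \leq k-1$. Three of the four branches of the proposition follow directly from earlier results. If $d = 1$, then $q \equiv 1 \pmod{11}$ (since $\ord_{11}(q) \mid \gcd(11^j, 10) = 1$), and Proposition~\ref{prop_m_equals_e1} gives $m(q,e) = \gcd(e, q-1)$. If $d$ is even, then $-1 + e\Z$ lies in $\langle q + e\Z \rangle$, and \cite[Lemma~\refPartI{lem_m_eq_2}]{BHHK1} together with \cite[Remark~\refPartI{rem_m_eq_2_cyclic_case}]{BHHK1} yields $m(q,e) = 2$. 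If $d = 5$ and $j = k-1$, then $\ord_e(q) = \varphi(e)/2$, and Corollary~\ref{cor_squares_mod_prime_powers} yields $m(q,e) = 3$ since $11 \equiv 3 \pmod 4$.

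The remaining case, $d = 5$ with $0 \leq j \leq k-2$, requires a separate argument and is the heart of the proof. For the upper bound $m(q,e) \leq 5$, I would take $\zeta := q^{11^j}$, which has order $5$ in $(\Z/e\Z)^\times$; because $5 \nmid 11^{k-1}$, $\zeta$ does not lie in the Sylow-$11$ subgroup, so $\zeta \not\equiv 1 \pmod{11}$ and $\gcd(\zeta - 1, e) = 1$. Factoring $\zeta^5 - 1 \equiv 0 \pmod e$ then yields $1 + \zeta + \zeta^2 + \zeta^3 + \zeta^4 \equiv 0 \pmod e$, a $5$-term relation among elements of $\langle q + e\Z \rangle$, and hence $m(q,e) \leq 5$.

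For the lower bound, $m \neq 1$ is trivial and $m \neq 2$ follows from $\ord_e(q)$ being odd. The key remaining step is to exclude $m \in \{3,4\}$. I would use the internal direct product decomposition $\langle q + e\Z\rangle = \mu_5(\Z/e\Z) \times C$, where $C$ is the unique subgroup of order $11^j$ in the Sylow-$11$ of $(\Z/e\Z)^\times$. The classical congruence $(1+11)^{11^{k-1-j}} \equiv 1 + 11^{k-j} \pmod{11^{k-j+1}}$ shows $C \subseteq \{y + e\Z : y \equiv 1 \pmod{11^{k-j}}\}$. Decomposing each $x_i = \mu_i \omega_i$ with $\mu_i$ a fifth root of unity and $\omega_i \in C$ therefore gives $x_i \equiv \mu_i \pmod{11^{k-j}}$, so any relation $\sum_{i=1}^t x_i \equiv 0 \pmod e$ with $t \in \{3, 4\}$ forces $\sum_{i=1}^t \mu_i \equiv 0 \pmod{11^{k-j}}$; since $j \leq k - 2$, in particular $\sum \mu_i \equiv 0 \pmod{121}$.

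The hard part will be the final finite verification: checking that no $3$- or $4$-term sum, with repetition, of $\mu_5(\Z/121\Z) = \{1, 3, 9, 27, 81\}$ (noting $3^5 = 243 \equiv 1 \pmod{121}$) is divisible by $121$. Only sums whose reductions modulo $11$ already vanish can be candidates, which narrows the enumeration to a handful of cases; for each one the $11$-adic valuation turns out to equal exactly $1$, giving the desired contradiction and hence $m(q,e) \geq 5$.
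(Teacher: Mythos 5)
Your proposal is correct and follows essentially the same route as the paper: the first three cases are dispatched by the same earlier results, and the remaining case $\ord_e(q)=5\cdot 11^{j}$ with $j\le k-2$ is reduced to a finite verification modulo $11^2=121$ --- the paper does this by citing the monotonicity of $m(q,e)$ under divisors and subgroups together with the computed value $m(4^{11},11^2)=5$, while you make the same reduction explicit via the decomposition $\langle q+e\Z\rangle=\mu_5\times C$. The finite check you defer does come out as claimed: every $3$- or $4$-term sum from $\{1,3,9,27,81\}$ that vanishes modulo $11$ has $11$-adic valuation exactly $1$.
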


\begin{proof}
 Note first that $(\Z/e\Z)^\times$ is cyclic of order $2 \cdot 5 \cdot 11^{k-1}$ and generated by $2+e\Z$. 
 
 If $\ord_e(q)$ is a power of $11$ then $m(q,e) = \gcd(e,q-1)$ by Proposition~\ref{prop_m_equals_e1}, and if $\ord_e(q)$ is even then $m(q,e) = 2$ by \cite[Lemma~\refPartI{lem_m_eq_2}]{BHHK1}.
 
 In all other cases, we have $\ord_e(q) = 5 \cdot 11^{l}$ for some $l<k$. Then $\gcd(11,q^{11^l}-1) = \gcd(11,q-1) = 1$, and therefore $\gcd(e,q^{11^l}-1) = 1$. Thus $m(q,e) \leq 5$ by
 \cite[Lemma~\refPartI{lem_m_bounded_by_p}]{BHHK1}, and $m(q,e) > 2$ by \cite[Lemma~\refPartI{lem_m_eq_2}]{BHHK1}. 
 
 If $\ord_e(q) = 5 \cdot 11^{k-1} = \varphi(e)/2$ then the result follows from Lemma~\ref{cor_squares_mod_prime_powers}. 
 
 Thus we may assume that $k \geq 2$. Then \cite[Lemma~\refPartI{same_m_for_same_subgroup}]{BHHK1} and explicit computations show: $m(4^{11},e) \geq m(4^{11},11^2) = 5$, i.e. $m(4^{11},e) = 5$. 
 Hence $m(4^{11^n},e) \geq m(4^{11},e) = 5$ for $n \in \N$, again by \cite[Lemma~\refPartI{same_m_for_same_subgroup}]{BHHK1}, and the result follows. 
\end{proof}

Our next result is similar to Proposition~\ref{prop_m_equals_e1}.

\begin{prop}\label{m_eq_e1_again}
 Let $e = 2p^k$ where $p$ is an odd prime and $k \in \N$.
 Moreover, let $1<q \in \N$ such that $\ord_e(q)$ is a power of $p$.      
 Then $m(q,e) = \gcd(e,q-1)$.
\end{prop}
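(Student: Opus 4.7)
The plan is to adapt the proof of Proposition~\ref{prop_m_equals_e1} to the case $e = 2p^k$. First, the reduction map $(\Z/e\Z)^\times \to (\Z/p^k\Z)^\times$ is an isomorphism, so $(\Z/e\Z)^\times$ is cyclic of order $p^{k-1}(p-1)$. Since $\ord_e(q)$ is a power of $p$ and $(\Z/p\Z)^\times$ is cyclic of order $p-1$, reduction modulo $p$ forces $q \equiv 1 \pmod{p}$; moreover $\gcd(q,e)=1$ forces $q \equiv 1 \pmod{2}$, so $q \equiv 1 \pmod{2p}$. Setting $s := \min\{k, v_p(q-1)\} \geq 1$, we obtain $e_1 := \gcd(e,q-1) = 2 p^s$. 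By \cite[Lemma~\refPartI{prop_m_and_e1}]{BHHK1} we already have $e_1 \mid m(q,e)$, so it suffices to exhibit a sum of at most $2p^s$ powers of $q$ which is divisible by $e$.

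Next, the classical congruence $(1+p)^{p^{t-1}} \equiv 1 + p^t \pmod{p^{t+1}}$ used in Proposition~\ref{prop_m_equals_e1} shows that the cyclic subgroup $\langle q + p^k\Z \rangle$ of $(\Z/p^k\Z)^\times$ is precisely $\{x + p^k\Z : x \equiv 1 \pmod{p^s}\}$, of order $p^{k-s}$. In particular $1 - 2 p^s \equiv 1 \pmod{p^s}$ lies in this subgroup, so there exists $c \in \N_0$ with $q^c \equiv 1 - 2 p^s \pmod{p^k}$. Since both $q^c$ and $1 - 2 p^s$ are odd, the Chinese Remainder Theorem upgrades this to $q^c \equiv 1 - 2 p^s \pmod{e}$, whence
\[
   q^c + (2 p^s - 1) \cdot q^0 \equiv 0 \pmod{e}.
\]
This is a sum of exactly $2 p^s = e_1$ powers of $q$, yielding $m(q,e) \leq e_1$ and hence the equality. (The degenerate case $s = k$, where $q \equiv 1 \pmod{e}$ and $e_1 = e$, can be handled separately by taking $c = 0$ and using $e$ copies of $q^0$, or by appealing directly to \cite[Example~\refPartI{example_m_trivial}]{BHHK1}.)

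The key twist compared to Proposition~\ref{prop_m_equals_e1} is to use $1 - 2 p^s$ in place of $1 - p^{r+1}$: the extra factor of $2$ guarantees oddness, which is precisely what is needed to lift a congruence mod $p^k$ to one mod $e = 2p^k$ via CRT, while simultaneously absorbing the $2$-part of $e_1$ so that the bound $m(q,e) \leq 2p^s$ still matches $e_1$. This is not really an obstacle but rather the point that must be identified correctly; once it is, the argument runs in parallel with the odd-prime-power case.
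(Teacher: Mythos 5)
Your proof is correct. The paper omits its own proof of this proposition, but your argument is exactly the adaptation of Proposition~\ref{prop_m_equals_e1} that the surrounding text signals: the reduction to $q \equiv 1 \pmod{2p}$, the identification $e_1 = \gcd(e,q-1) = 2p^s$, the lower bound $e_1 \mid m(q,e)$ from \cite[Lemma~\refPartI{prop_m_and_e1}]{BHHK1}, and the replacement of $1-p^{r+1}$ by the odd number $1-2p^s$ (so that the congruence modulo $p^k$ lifts to one modulo $e=2p^k$ while the resulting sum $q^c + (2p^s-1)\cdot q^0$ still has exactly $e_1$ terms) are all sound, as is your separate treatment of the degenerate case $q \equiv 1 \pmod{e}$.
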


\begin{proof}
Omitted.
\end{proof}

\begin{rem}
Suppose that $e = 2p^k$ where $k \in \N$ and $p$ is an odd Pierpont prime, and write $p = 1 + 2^a3^b$ where $a,b \in \N_0$. Then $\varphi(e) = p^{k-1}2^a3^b$.
Let $q \in \N$ such that $\gcd(q,e) = 1$. If ${\ord}_e(q)$ is even then $m(q,e) = 2$ by \cite[Remark~\refPartI{rem_m_eq_2_cyclic_case}]{BHHK1}. If 
$\ord_e(q)$ is a power of $p$ then $m(q,e) = \gcd(e,q-1)$ by Proposition~\ref{m_eq_e1_again}. It remains to deal with the case where $\ord_e(q)$ is odd and 
divisible by $3$. Then $e_1 := \gcd(e,q-1) = 2$, and $m(q,e) \leq e_1m(q,\frac{e}{e_1}) = 2m(q,p^k) \leq 6$ by Corollary~\ref{pierpont_prime_cor}. Since 
$2 = e_1 \mid m(q,e)$ by \cite[Lemma~\refPartI{prop_m_and_e1}]{BHHK1} we conclude that $m(q,e) \in \{4,6\}$ in this case. 
Moreover, it is easy to check that $m(q,e) = 6$ whenever $\ord_e(q) = 3$ and $e > 14$. 
\end{rem}

\begin{example}
Table~\ref{table_meq_large} displays some more values $m(q,e)$.
Here we list only one generator $q+e\mathbb{Z}$ for
each cyclic subgroup of $(\mathbb{Z}/e\mathbb{Z})^\times$.
\end{example}

The following result may also be of interest;
it is related to \cite[Lemma~\refPartI{prop_m_and_e1}~(iv)]{BHHK1}.
Here we denote by $\Phi_n \in \Q[X]$ the $n$-th cyclotomic polynomial.

\begin{prop}\label{prop_e_divides_Phi_n_and_m_smaller_than_n}
Let $n$ be a prime number. Then there are only finitely many $e \in \N$ such that $e \mid \Phi_n(q)$ (in particular, $\ord_e(q) \mid n$)
and  $m(q,e) < n$ for some $q \in \N$. 
\end{prop}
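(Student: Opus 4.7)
The plan is to separate the two possible values of $\ord_e(q)$ and, for the nontrivial case, use a resultant argument over a finite family of auxiliary polynomials.

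Since $n$ is prime, $\ord_e(q) \mid n$ forces $\ord_e(q) \in \{1,n\}$. If $\ord_e(q) = 1$, i.e. $q \equiv 1 \pmod{e}$, then by \cite[Example~\refPartI{example_m_trivial}]{BHHK1} we have $m(q,e) = e$, so the assumption $m(q,e) < n$ gives $e < n$, yielding only finitely many $e$. From now on I would assume $\ord_e(q) = n$, so that $q^n \equiv 1 \pmod{e}$.

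Write $m(q,e) = t < n$ and choose a representation $e \mid q^{a_1} + \cdots + q^{a_t}$. Reducing the exponents modulo $n$ (which is legal because $q^n \equiv 1 \pmod{e}$) and grouping, I obtain a polynomial $P(X) = \sum_{i=0}^{n-1} c_i X^i \in \Z[X]$ with $c_i \geq 0$, $\sum c_i = t$, and $e \mid P(q)$. The key observation is that the set of such $P$ is finite: $\deg P < n$ and the coefficient sum is strictly less than $n$, so the coefficients themselves are all $< n$. Thus there are only finitely many candidate polynomials $P$, independent of $q$ and $e$.

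Next I would argue that for each such $P$ the cyclotomic polynomial $\Phi_n$ does not divide $P$ in $\Q[X]$. Indeed, if it did, then since $\Phi_n$ has degree $n-1$, we would have $P = c\, \Phi_n$ for some constant $c$; since $P$ has nonnegative integer coefficients and $\Phi_n(X) = 1+X+\cdots+X^{n-1}$ has all coefficients equal to $1$, $c$ must be a positive integer, forcing $\sum c_i = cn \geq n$, contradicting $\sum c_i = t < n$. Because $\Phi_n$ is irreducible in $\Q[X]$, this implies $\gcd(\Phi_n, P) = 1$ in $\Q[X]$, so the resultant $R_P := \mathrm{Res}(\Phi_n, P) \in \Z$ is nonzero.

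Finally, by the standard Bezout property of the resultant there exist $A_P, B_P \in \Z[X]$ with $A_P \Phi_n + B_P P = R_P$. Evaluating at $q$ and using $e \mid \Phi_n(q)$ together with $e \mid P(q)$ gives $e \mid R_P$, hence $e \leq |R_P|$. Taking the maximum over the finitely many admissible $P$ produces a bound on $e$ that depends only on $n$, which combined with the first case proves the proposition. The only point that requires any care is ensuring that the reduction modulo $q^n - 1$ really is available, which is where the hypothesis $\ord_e(q) = n$ (forced by $e \mid \Phi_n(q)$ and primality of $n$) enters decisively.
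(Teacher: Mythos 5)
Your proof is correct and follows essentially the same route as the paper's: reduce the exponents modulo $n$ to get finitely many candidate polynomials of small coefficient sum, show each is coprime to the irreducible polynomial $\Phi_n$, and use a B\'ezout identity over $\Z[X]$ (the paper clears denominators by a fixed integer $d$ where you invoke the resultant) to bound $e$. The only cosmetic differences are that the paper normalizes the exponents so that the class $n-1$ is missing, making $\deg g \leq n-2 < \deg \Phi_n$ and coprimality immediate, whereas you allow degree $n-1$ and rule out $\Phi_n \mid P$ via the coefficient sum; your initial case split on $\ord_e(q)$ is unnecessary since $e \mid \Phi_n(q)$ already gives $q^n \equiv 1 \pmod{e}$.
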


\begin{proof}
We fix a prime number $n$ and an integer $m \in \{1,\ldots,n-1\}$.
Suppose that $q,e \in \N$ satisfy $e \mid \Phi_n(q)$ and $m(q,e) = m$.
Then there are $i_1,\ldots,i_m \in \N_0$ such that 
$$(\ast) \quad\quad\quad q^{i_1} + \ldots + q^{i_m} \equiv 0 \pmod{e}.$$
Since $q^n \equiv 1 \pmod{e}$ we may assume that $i_1,\ldots,i_m \in \{0,\ldots,n-1\}$. 
Since $m<n$ there exists $j \in \{0,\ldots,n-1\} \setminus \{i_1,\ldots,i_m\}$. 
Since we can multiply $(\ast)$ by $q^{n-1-j}$ we may assume that $j = n-1$, i.e.\ $i_1,\ldots,i_m \in \{0,\ldots,n-2\}$. 
(Of course, we may then also assume that $0 = i_1 \leq \ldots \leq i_m \leq n-2$.)
Thus there are only finitely many possibilities for the $m$-tuple $(i_1,\ldots,i_m)$. 
Consider the polynomial $g(X) := X^{i_1} + \ldots + X^{i_m} \in \Q[X]$. 
Since $\Phi_n(X)$ is irreducible of degree $n-1$ there are $a(X), b(X) \in \Q[X]$ such that $a(X)\Phi_n(X) + b(X)g(X) = 1$.
We fix $d \in \N$ such that $da(X), db(X) \in \Z[X]$.
Then $e \mid da(q)\Phi_n(q) + db(q)g(q) = d$.
Hence there are only finitely many possibilities for $e$, as claimed.
\end{proof}

\begin{example}\label{example_Phi5}
(i)
Let $q, e \in \N$ such that $e \mid \Phi_5(q)$. 
Carrying out the calculations in the proof of
Proposition~\ref{prop_e_divides_Phi_n_and_m_smaller_than_n}
for $1 \leq m \leq 4$, we obtain the following values $(d, m)$,
where we choose the minimal possible $d$:
$(1, m)$ and $(m, m)$ for all $m$, $(2, 4)$,
$(11, 3)$, $(11, 4)$, and $(61, 4)$.
Since divisors of $\Phi_d(q)$ are odd for odd $d$
(and are not divisible by $3$ if additionally $d$ is not divisible by $3$),
we get $e = 1$ if $m(q, e) = 1$,
$e = 11$ if $m(q, e) = 3$, and
$e \in \{ 11, 61 \}$ if $m(q, e) = 4$.
(And Proposition~\ref{prop_e_11} or Table~\ref{table_meq_small} shows
that we cannot have $m(q,e) = 4$
in case $e = 11$.)
We will need this result later on.

(ii)
Let $q, e \in \N$ such that $1 < e \mid \Phi_7(q)$, and let $m = m(q,e)$.
We proceed as in (i), and discard $2$-parts and $3$-parts of the values
for $d$.
Moreover, we can discard those candidates $d$ with the property that
$7$ does not divide $\varphi( d )$,
since the relevant divisors $e$ of $d$ must satisfy $\ord_e(q) = 7$
for some prime residue $q$ modulo $e$;
this criterion excludes $(d, m) \in \{ (13, 5), (41, 6) \}$.
We are left with the following list.
\[
   \begin{array}{ll}
     m = 3: & e \in \{ 43 \} \\
     m = 4: & e \in \{ 29, 71, 547 \} \\
     m = 5: & e \in \{ 29, 43, 113, 197, 421, 463, 3277 \} \\
     m = 6: & e \in \{ 29, 43, 71, 113, 197, 211, 379, 449, 463,
                       757, 2689, 3053, 13021 \}
   \end{array}
\]
(Again, computing $m(q,e)$ shows that
$e = 29$ can occur only for $m = 4$ (see Table~\ref{table_meq_small}),
$e = 43$ can occur only for $m = 3$ (see Table~\ref{table_meq_middle}),
$e = 71$ can occur only for $m = 4$ (see Table~\ref{table_meq_large}), and
$e \in \{ 113, 197, 463 \}$ can occur only for $m = 5$.)
\end{example}

Let again $q,e \in \N$ such that $\gcd(q,e) = 1$.
Moreover, let $n \in \N$ such that $e \mid q^n-1$ (e.g. $n = \ord_e(q)$),
and set $z := \frac{q^n-1}{e}$.
In order to avoid trivialities, we also suppose that $q>1$ and $1 < e < q^n-1$.
Then \cite[Proposition~\refPartI{m_and_digit_sums}]{BHHK1} implies that
$$ m(q,e) = \min \{s_q(ke): k=1,\ldots,z\}
          = \min \{s_q(ke): k=1,\ldots,z-1\}.$$
Our next aim is to derive another description of $m(q,e)$.
For this we introduce some more notation.
For $x \in \Z$, we define $\overline{x} \in \Z$ by
$\overline{x} \equiv x \pmod{z}$ and $0 \leq \overline{x} < z$.

\begin{prop}\label{prop_coeffs_z}
Let $k \in \N$ such that $k < z$.
Then $ke$ has the $q$-adic expansion
\[
   ke = \sum_{i=1}^n
     \frac{\overline{k q^{n-i}} q - \overline{k q^{n-i+1}}}{z} q^{i-1}
      = \sum_{i=1}^n
     \left\lfloor \frac{\overline{k q^{n-i}} q}{z} \right\rfloor q^{i-1}.
\]
Thus
$$s_q(ke) = \frac{q-1}{z} \sum_{i=1}^n \overline{kq^i}
= \frac{q-1}{z} \frac{n}{\mathrm{ord}_z(q)} \sum_{i=1}^{\mathrm{ord}_z(q)} \overline{kq^i},$$
and
$$m(q,e) = \frac{q-1}{z} \frac{n}{\mathrm{ord}_z(q)}\min \left\{ \sum_{i=1}^{\mathrm{ord}_z(q)} \overline{kq^i}: 1 \leq k < z \right\}.$$
\end{prop}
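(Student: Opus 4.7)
The plan is to set $k_i := \overline{k q^{n-i}}$ for $i = 0, 1, \ldots, n$ and exhibit the coefficients $c_i := (q k_i - k_{i-1})/z$ as the $q$-adic digits of $ke$. Since $\gcd(q,z) = 1$ and $q^n \equiv 1 \pmod{z}$, one first observes $k_0 = \overline{k q^n} = \overline{k} = k = k_n$ (using $0 < k < z$). From $k_{i-1} = \overline{q k_i}$ we get $q k_i - k_{i-1} = c_i z$ for some integer $c_i \geq 0$, and because $0 \leq k_i < z$ forces $0 \leq q k_i < qz$, while $k_{i-1} \geq 0$, one obtains $c_i \leq q k_i / z < q$. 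Hence each $c_i$ is a valid $q$-adic digit, and $c_i = \lfloor q k_i / z \rfloor$.

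Next I would verify $\sum_{i=1}^n c_i q^{i-1} = ke$ by a telescoping computation:
\[
   \sum_{i=1}^n c_i q^{i-1}
   = \frac{1}{z} \sum_{i=1}^n (q^i k_i - q^{i-1} k_{i-1})
   = \frac{q^n k_n - k_0}{z}
   = \frac{(q^n-1) k}{z} = ke,
\]
where the boundary terms collapse thanks to $k_0 = k_n = k$. Since the $c_i$ lie in $\{0, \ldots, q-1\}$, this is the $q$-adic expansion of $ke$, giving both displayed formulas (upon recognising $\overline{k q^{n-i+1}} = k_{i-1}$).

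For the digit sum, I sum the $c_i$ and again telescope: expanding
\[
   s_q(ke) = \sum_{i=1}^n \frac{q k_i - k_{i-1}}{z}
           = \frac{1}{z}\Bigl( (q-1)\sum_{i=1}^n k_i + k_n - k_0 \Bigr)
           = \frac{q-1}{z} \sum_{i=1}^n k_i,
\]
and reindexing $j = n-i$ together with the equality $\overline{k q^0} = \overline{k q^n} = k$ turns this into $\frac{q-1}{z} \sum_{i=1}^n \overline{k q^i}$. The sequence $i \mapsto \overline{k q^i}$ is periodic of period $\ord_z(q)$, and since $\ord_z(q) \mid n$ (because $q^n \equiv 1 \pmod z$), the sum over $n$ consecutive terms equals $\frac{n}{\ord_z(q)}$ times the sum over one period, giving the claimed formula for $s_q(ke)$.

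Finally, the formula for $m(q,e)$ follows by taking the minimum over $k$. Here one uses the description $m(q,e) = \min\{ s_q(ke) : 1 \leq k < z \}$ already recorded just before the proposition, together with the closed form for $s_q(ke)$ that we just derived. The main obstacle is purely notational, namely keeping track of the two indexings $\overline{k q^{n-i}}$ and $\overline{k q^i}$; once the telescoping identity $k_0 = k_n$ is in place, the computation is routine.
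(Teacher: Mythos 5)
Your proof is correct and follows essentially the same route as the paper's: define $c_i = (q\,\overline{kq^{n-i}} - \overline{kq^{n-i+1}})/z$, check integrality, nonnegativity and the bound $c_i < q$, telescope to recover $ke$, and telescope again (using $k_0 = k_n$) for the digit sum. No gaps.
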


\begin{proof}
Set $c_{k,i} = \overline{k q^{n-i}}$, for $0 \leq i \leq n$,
and denote the coefficient of $q^{i-1}$ in the above summation by $a_{k,i}$.
Then $z a_{k,i} = c_{k,i} q - c_{k,i-1}$ for $i=1,\ldots,n$,
$c_{k,0} = c_{k,n} = k$, and
\[
   z \sum_{i=1}^n a_{k,i} q^{i-1} =
       \sum_{i=1}^n c_{k,i} q^i - \sum_{i=1}^n c_{k,i-1} q^{i-1} =
       c_{k,n} q^n - c_{k,0} = k \left( q^n - 1 \right) = z k e
\]
holds, as claimed.

The $a_{k,i}$ are integers because
$\overline{k q^{n-i}} q - \overline{k q^{n-i+1}} \equiv 0 \pmod{z}$.

The $a_{k,i}$ are nonnegative because
$\overline{k q^{n-i}} q \geq
 \overline{\overline{k q^{n-i}} {q}} =
 \overline{k q^{n-i+1}}$.

We have $a_{k,i} < q$ because
$\overline{k q^{n-i}} q - \overline{k q^{n-i+1}} < z q$.
Thus the $q$-adic expansion of $ke$ has the desired form. Hence
\[
   s_q(ke) = \sum_{i=1}^n a_{k,i} = \frac{q-1}{z} \sum_{i=1}^n c_{k,i}
           = \frac{q-1}{z} \sum_{i=1}^n \overline{k q^i},
\]
and $m(q,e)$ is the minimum of these values,
for the admissible values of $k$.
\end{proof}

\begin{rem}\label{rem_calculating_expansions}
(i) A natural way to derive the statement of Proposition~\ref{prop_coeffs_z}
is as follows.
Dividing the obvious $q$-adic expansion of $q^n-1$ by $z$, we get
\[
   \frac{q^n-1}{z} = \frac{q-1}{z} q^{n-1} + \frac{q-1}{z} q^{n-2} + \cdots +
                     \frac{q-1}{z} q^0.
\]
If $q-1$ is not divisible by $z$ then these coefficients aren't integers,
and we adjust them iteratively:
Replacing $(q-1)/z$ by $(q - \overline{q})/z$ in the coefficient
of $q^{n-1}$ yields an integer, and can be compensated in the summation
by choosing $(\overline{q} q - 1)/z$ as the coefficient of $q^{n-2}$.
Next we replace this coefficient by
$(\overline{q} q - \overline{q^2})/z$ and adjust the coefficient of
$q^{n-3}$ accordingly.
Repeating this process,
we get $(\overline{q^{n-1}} q - 1)/z$ as the coefficient of $q^0$,
which is already an integer because $\ord_z(q)$ divides $n$.

(ii) Note that the cyclic subgroup $H := \langle q+z\mathbb{Z} \rangle$ of the multiplicative group
$G := (\mathbb{Z}/z\mathbb{Z})^\times$
acts on the additive group $\mathbb{Z}/z\mathbb{Z}$ by multiplication, and that
$$\sum_{i=1}^{\mathrm{ord}_z(q)} \overline{kq^i} = |H_k| \sum_{x \in B} \overline{x}$$
where $B$ is the $H$-orbit of $k+z\mathbb{Z}$ and
$$H_k := \{h+z\mathbb{Z} \in H: kh \equiv k \pmod{z} \}$$
is the stabilizer of $k+z\mathbb{Z}$ in $H$.

(iii) Now suppose, in addition, that $-1 + z\mathbb{Z} \in H = \langle q+z\mathbb{Z} \rangle$. Then $B = -B$ and $\overline{-x} =
z - \overline{x}$ for $x \in B$. Thus
$$2 \sum_{x \in B} \overline{x} = \sum_{x \in B} \overline{x} + \sum_{x \in B} \overline{-x} =
\sum_{x \in B} \overline{x} + \sum_{x \in B} z - \overline{x} = |B| z,$$
and $\sum_{x \in B} \overline{x} = |H:H_k| \frac{z}{2}$. Hence Proposition~\ref{prop_coeffs_z} implies that
$$s_q(ke) = n \frac{q-1}{2}.$$
Since this expression is independent of $k$ we conclude that
$$m(q,e) = n \frac{q-1}{2}$$
in this case.

(iv) The second expression for the $q$-adic expansion of $k e$
that is stated in Proposition~\ref{prop_coeffs_z}
can be used to compute the coefficients for $i = n, n-1, \ldots, 1$,
without computing the number $k e$.
Note that in typical examples (see Remark~\ref{ref_database_examples}),
$q$ and $z$ are small numbers, whereas $e$ can be quite large.
\end{rem}

\begin{example}\label{example_s_q_equal}
(i) If $q \equiv 1 \pmod{z} $ then $ke = \sum_{i=1}^n \frac{k(q-1)}{z} q^{i-1}$ for $k = 1, \ldots, z-1$, and $s_q(ke) = \frac{kn(q-1)}{z}$ for these $k$. 
Hence $m(q,e) = ne\frac{q-1}{q^n-1}$, as in Corollary~\ref{cor_m_functional_equation}.

(ii) If $q \equiv -1 \pmod{z}$, if $n=2$ and $z > 2$ then
$$ke = \frac{(z-k)q-k}{z} + \frac{kq - (z-k)}{z} q,$$
so that $s_q(ke) = q-1$ for $k=1,\ldots,z-1$, and $m(q,e) = q-1$.

(iii) Suppose that $(\Z/z\Z)^\times$ is cyclic and generated by $q+z\Z$.
Then $s_q(ke) = n \frac{q-1}{2}$
for $k=1,\ldots,z-1$, and $m(q,e) = n \frac{q-1}{2}$.

(iv) Suppose that $z = p^a$ where $p$ is a prime with $p \equiv 1 \pmod{4}$,
 and $a \in \N$.
Moreover, suppose that $\ord_z(q) = \frac{\varphi(z)}{2} = p^{a-1} \frac{p-1}{2}$.
Then $(\Z/z\Z)^\times$ is cyclic, and $-1+z\Z \in \langle q+z\Z \rangle$.
Thus Remark~\ref{rem_calculating_expansions} implies that
$$m(q,e) = s_q(ke) = n \frac{q-1}{2} \quad \hbox{for} \quad k=1,\ldots,z-1.$$
\end{example}

\begin{prop}\label{prop_large_order_of_q_modulo_prime}
Suppose that $z$ is a prime with $z \equiv -1 \pmod{4}$,
and that $\ord_z(q) = \frac{z-1}{2}$.
Then $|\{ s_q(ke); 1 \leq k \leq z-1 \}| = 2$.
\end{prop}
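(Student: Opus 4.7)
The plan is to apply Proposition~\ref{prop_coeffs_z}, combined with the observation that the hypothesis on $\ord_z(q)$ pins down the subgroup $H := \langle q + z\Z \rangle \leq (\Z/z\Z)^\times$ to be the (unique) index-$2$ subgroup, namely the group of quadratic residues modulo $z$. Since $z \equiv -1 \pmod{4}$, quadratic reciprocity (or the elementary fact $(-1)^{(z-1)/2} = -1$) gives $-1 + z\Z \notin H$.

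For $1 \leq k \leq z-1$, we have $k + z\Z \in (\Z/z\Z)^\times$, and its $H$-orbit is either $H$ (if $k$ is a quadratic residue) or $-H = (\Z/z\Z)^\times \setminus H$ (otherwise); in particular the stabilizer is trivial, so the orbit has size $\ord_z(q) = (z-1)/2$. Setting $n := \ord_z(q)$ and applying Proposition~\ref{prop_coeffs_z} gives
\[
   s_q(ke) = \frac{q-1}{z} \sum_{i=1}^{n} \overline{k q^i}
           = \frac{q-1}{z} \sum_{x \in B_k} \overline{x},
\]
where $B_k$ denotes the $H$-orbit of $k + z\Z$. Hence $s_q(ke)$ takes at most two distinct values as $k$ varies, namely $\frac{q-1}{z} S_1$ with $S_1 := \sum_{h \in H} \overline{h}$ (the QR case) and $\frac{q-1}{z} S_2$ with $S_2 := \sum_{h \in -H} \overline{h} = \sum_{h \in H}(z-\overline{h}) = z|H| - S_1 = \frac{z(z-1)}{2} - S_1$ (the NR case).

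It remains to show $S_1 \neq S_2$, which would force exactly two distinct values. The key observation is a parity argument: $S_1 = S_2$ would force $2 S_1 = z(z-1)/2$, i.e., $S_1 = z(z-1)/4$. But $z \equiv -1 \pmod{4}$ means $(z-1)/2$ is odd, so $z(z-1)/2$ is an odd integer, and hence $z(z-1)/4$ is not an integer, while $S_1 \in \Z$. This contradiction shows $S_1 \neq S_2$, completing the argument.

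The only step that requires any care is the identification of the two orbit sums, and the parity argument at the end; both are short, and the hypothesis $z \equiv -1 \pmod{4}$ is used exactly twice (to ensure $-1 \notin H$ so that the two orbits are genuinely $H$ and $-H$, and to ensure that $z(z-1)/4$ is not an integer). I do not anticipate any real obstacle.
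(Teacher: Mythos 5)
Your proof is correct, and it diverges from the paper's at the decisive step. Both arguments begin the same way: apply Proposition~\ref{prop_coeffs_z}, observe that $\langle q+z\Z\rangle$ is the index-$2$ subgroup $H$ of quadratic residues, that $-1+z\Z\notin H$ because $z\equiv -1\pmod{4}$, and hence that $s_q(ke)$ is a fixed constant times either $S_1=\sum_{h\in H}\overline{h}$ or $S_2=\sum_{h\in -H}\overline{h}$. (Your specialization to $n=\ord_z(q)$ is harmless, since the omitted factor $n/\ord_z(q)$ is independent of $k$.) To show $S_1\neq S_2$, the paper invokes Dirichlet's class number formula, i.e.\ the nontrivial inequality $\sum_{x\in G_+}\overline{x}<\sum_{x\in G_-}\overline{x}$ for $z\equiv -1\pmod 4$, whereas you use only the parity observation that $S_1+S_2=z(z-1)/2$ is odd when $z\equiv -1\pmod 4$, so the two integer sums cannot be equal. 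Your argument is strictly more elementary and is complete for the statement as written. What the paper's heavier tool buys is the extra information of \emph{which} sum is smaller (the residue sum), which is exploited later, in Proposition~\ref{lem_large_order_of_q_modulo_prime} and the remark following it, to identify $m[q,n,z]$ with $(q-1)Q/z$ where $Q$ is the sum of the quadratic residues; your parity argument does not recover that refinement, but it is not needed here.
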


\begin{proof}
Let $k \in \{1,\ldots,z-1\}$. Then Proposition~\ref{prop_coeffs_z} implies:
$$s_q(ke) = \frac{q-1}{z} \frac{2n}{z-1} \sum_{i=1}^{\ord_z(q)} \overline{kq^i}.$$
For $\epsilon \in \{ \pm 1 \}$,
we set $G_\epsilon := \{x+z\Z \in (\Z/z\Z)^\times: \left( \frac{x}{z} \right) =
\epsilon \}$ where $\left( \frac{x}{z} \right)$ denotes the Legendre symbol.
Then
$$\{kq^i + z\Z : i = 1,\ldots,\ord_z(q)\} \in \{G_+, G_-\}.$$
Moreover, by a result of Dirichlet (cf. \cite[Chap.~6, equ.~(19)]{Davenport}),
we have
$$\sum_{x+z\Z \in G_+} \overline{x} < \sum_{x + z\Z \in G_-} \overline{x}.$$
The result follows.
\end{proof}


\section{Lower bounds}

Let $q,n,e \in \N$ such that $q>1$ and $e \mid q^n-1$. We set $z := \frac{q^n-1}{e}$. Moreover, let $F$ be a field, and let $A = A(q,n,e)$ be the 
$F$-algebra of \cite{BHHK1}, Section 3. We denote by $J := {\mathrm J}(A)$ the Jacobson radical of $A$.
In this section we establish certain lower bounds for $\LL(A)$. In certain cases, these lower bounds coincide with
the upper bound established in \cite[Theorem~\refPartI{thm_upper_and_lower_bounds_I}]{BHHK1}.

\begin{prop}\label{prop_n1_plus_n2}
 Let $n_1,n_2 \in \N$ such that $e \mid q^{n_1}-1$ and $e \mid q^{n_2}-1$. Then
 $$\LL(A(q,n_1+n_2,e)) \geq \LL(A(q,n_1,e)) + \LL(A(q,n_2,e)) -1.$$
 Thus $\LL(A(q,rn,e)) \geq r \cdot \LL(A(q,n,e)) - r + 1$, for $r \in \N$.
\end{prop}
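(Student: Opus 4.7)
The plan is to construct commuting copies of $A(q,n_1,e)$ and $A(q,n_2,e)$ inside $A := A(q,n_1+n_2,e)$ and exploit the socle description of these local symmetric algebras. Write $A = F[x_1,\ldots,x_{n_1+n_2}]/I$ with $I$ generated by the $x_j^q$, and let $A_1$ be the subalgebra generated by monomials in $x_1,\ldots,x_{n_1}$ that lie in $A$, and $A_2$ the subalgebra generated by monomials in $x_{n_1+1},\ldots,x_{n_1+n_2}$ that lie in $A$. The first step is to verify that $A_i$ is isomorphic to $A(q,n_i,e)$: for $A_1$ this is immediate from the construction; for $A_2$, a monomial $x_{n_1+1}^{b_1}\cdots x_{n_1+n_2}^{b_{n_2}}$ with $b_1+qb_2+\cdots+q^{n_2-1}b_{n_2} \equiv 0 \pmod{e}$ contributes exponent sum $q^{n_1}(b_1+qb_2+\cdots+q^{n_2-1}b_{n_2})$, which is $\equiv 0 \pmod{e}$ because $q^{n_1} \equiv 1 \pmod{e}$.

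Next, since $A_1$ and $A_2$ are generated by monomials in disjoint sets of variables in the commutative algebra $A$, their elements commute, and the natural multiplication map yields an algebra homomorphism $\mu : A_1 \otimes_F A_2 \longrightarrow A$. Because each $A_i$ is local (with maximal ideal $\J(A_i)$ spanned by the non-constant basis monomials), the embedding $A_i \hookrightarrow A$ sends $\J(A_i)$ into $\J(A)$. Consequently, for any $a,b \geq 0$ one has
\[
   \mu\bigl(\J(A_1)^a \otimes \J(A_2)^b\bigr) \subseteq \J(A)^{a+b}.
\]

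Now set $l_i := \LL(A(q,n_i,e))$. The key point is that each $A_i$ is split, local and symmetric, so it is self-injective with simple socle; the nonzero monomial $b_{z_i} := x_1^{q-1}\cdots x_{n_i}^{q-1}$ (suitably shifted in the case of $A_2$) lies in $\mathrm{soc}(A_i)$, whence $\mathrm{soc}(A_i) = F\cdot b_{z_i}$. Since $\J(A_i)^{l_i-1}$ is a nonzero subspace of $\mathrm{soc}(A_i)$, we get $\J(A_i)^{l_i-1} = F\cdot b_{z_i}$. Their product in $A$ equals $x_1^{q-1}\cdots x_{n_1+n_2}^{q-1}$, which is the nonzero socle generator $b_z$ of $A$ (with $z = (q^{n_1+n_2}-1)/e$). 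By the displayed inclusion, this element lies in $\J(A)^{l_1+l_2-2}$, so $\J(A)^{l_1+l_2-2} \neq 0$ and hence $\LL(A) \geq l_1+l_2-1$, which is the first assertion. The second assertion follows by induction on $r$ from the first, applied to $n_1 = (r-1)n$ and $n_2 = n$.

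The main obstacle is really just the verification that the subalgebras $A_1, A_2$ are well-defined and isomorphic to the appropriate $A(q,n_i,e)$ — everything else is formal once one knows that local symmetric algebras have simple socle coinciding with the top power of the Jacobson radical. No further subtlety should arise, since the socle generators of $A_1$ and $A_2$ multiply in $A$ to a monomial of top degree in each variable, which is manifestly nonzero modulo $I$.
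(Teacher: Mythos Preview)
Your proof is correct and follows essentially the same approach as the paper: embed $A(q,n_1,e)$ and $A(q,n_2,e)$ as subalgebras of $A(q,n_1+n_2,e)$ using disjoint blocks of variables, verify the second embedding via $q^{n_1}\equiv 1\pmod e$, and then multiply elements from $\J(A_1)^{l_1-1}$ and $\J(A_2)^{l_2-1}$ to get a nonzero element in $\J(A)^{l_1+l_2-2}$. The paper phrases the last step slightly more concretely (``let $y_j$ be a nonzero product of $t_j$ basis elements'') while you invoke the socle description of a split local symmetric algebra to identify $\J(A_i)^{l_i-1}$ with $F\cdot b_{z_i}$; these amount to the same thing, and your version makes the nonvanishing of the product $y_1y_2=b_z$ a little more transparent. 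The tensor-product map $\mu$ is not strictly needed but does no harm.
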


\begin{proof}
Note first that $q^{n_1 + n_2} = q^{n_1}q^{n_2} \equiv 1 \cdot 1 \equiv 1 \pmod{e}$. Then observe that the $F$-algebra 
$F[x_1,\ldots,x_{n_1},x_{n_1+1}, \ldots, x_{n_1+n_2}]$ contains the subalgebras $F[x_1,\ldots,x_{n_1}]$ and $F[x_{n_1+1},\ldots,x_{n_1+n_2}]$. We consider
$A(q,n_1,e)$ as a subalgebra of $F[x_1,\ldots,x_{n_1}]$, as usual, and $A(q,n_2,e)$ as a subalgebra of $F[x_{n_1+1},\ldots,x_{n_1+n_2}]$, via a shift of
indices. Then $A(q,n_1,e)$ and $A(q,n_2,e)$ become subalgebras of $A(q,n_1+n_2,e)$; this is obvious for $A(q,n_1,e)$, and if a monomial $x_{n_1+1}^{i_1}
\ldots x_{n_1+n_2}^{i_{n_2}}$ satisfies 
$$i_1 + qi_2 + \ldots + q^{n_2-1}i_{n_2} \equiv 0 \pmod{e}$$ 
then also $q^{n_1}i_1 + q^{n_1+1}i_2 + \ldots + q^{n_1+n_2-1}i_{n_2} \equiv 0 \pmod{e}$.

For $j=1,2$, let $t_j := \LL(A(q,n_j,e))-1$, and let $y_j$ be a nonzero product of $t_j$ basis elements in $\J(A(q,n_j,e))$. Then $y_1y_2$
is a nonzero product of $t_1t_2$ basis elements in $\J(A(q,n_1+n_2,e))$. Thus $\LL(A(q,n_1+n_2,e)) > t_1+t_2$
which implies the first inequality in Proposition~\ref{prop_n1_plus_n2}.
The second inequality follows by induction on $r$.
\end{proof}

We obtain the following consequence.

\begin{cor}\label{cor_n1_plus_n2}
 For $i=1,2$, let $n_i \in \N$ such that $e \mid q^{n_i}-1$. Moreover, suppose that $\LL(A(q,n_i,e)) = \lfloor n_i \frac{q-1}{m} \rfloor +1$
 for $i=1,2$ where $m = m(q,e)$. If $m \mid n_1(q-1)$ then $\LL(A(q,n_1+n_2,e)) =  \lfloor (n_1+n_2) \frac{q-1}{m} \rfloor + 1$.
\end{cor}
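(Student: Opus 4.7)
The plan is to combine the lower bound from Proposition~\ref{prop_n1_plus_n2} with the upper bound~(1) stated in the introduction, using the divisibility hypothesis $m \mid n_1(q-1)$ to make the two floor expressions match.

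First I would apply Proposition~\ref{prop_n1_plus_n2} together with the two hypotheses on $\LL(A(q,n_i,e))$ to obtain
\[
  \LL(A(q,n_1+n_2,e)) \geq
     \left\lfloor n_1 \frac{q-1}{m} \right\rfloor
   + \left\lfloor n_2 \frac{q-1}{m} \right\rfloor + 1.
\]
Next I would exploit the assumption $m \mid n_1(q-1)$, which says that $n_1 \frac{q-1}{m}$ is an integer. Thus $\lfloor n_1 \frac{q-1}{m} \rfloor = n_1 \frac{q-1}{m}$, and the elementary identity $\lfloor a + b \rfloor = a + \lfloor b \rfloor$ (valid whenever $a \in \Z$) yields
\[
  \left\lfloor n_1 \frac{q-1}{m} \right\rfloor
  + \left\lfloor n_2 \frac{q-1}{m} \right\rfloor
  = \left\lfloor (n_1+n_2) \frac{q-1}{m} \right\rfloor.
\]
Combining the last two displays gives the lower bound
$\LL(A(q,n_1+n_2,e)) \geq \lfloor (n_1+n_2) \frac{q-1}{m} \rfloor + 1$.

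Finally, I would invoke the upper bound~(1) from the introduction, which applies because $e \mid q^{n_1+n_2}-1$ (since $e$ divides each of $q^{n_1}-1$ and $q^{n_2}-1$). It gives $\LL(A(q,n_1+n_2,e)) \leq \lfloor (n_1+n_2) \frac{q-1}{m} \rfloor + 1$, and the desired equality follows. There is no real obstacle here: the entire argument is the observation that the floor function is additive across integer summands, which is exactly what the hypothesis $m \mid n_1(q-1)$ guarantees.
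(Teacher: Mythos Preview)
Your proof is correct and is essentially identical to the paper's: both apply Proposition~\ref{prop_n1_plus_n2} for the lower bound, use the hypothesis $m \mid n_1(q-1)$ to merge the two floor terms, and then close the gap via the general upper bound (1) (which is \cite[Theorem~\refPartI{thm_upper_and_lower_bounds_I}~(i)]{BHHK1}).
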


\begin{proof}
By Proposition~\ref{prop_n1_plus_n2},
the hypotheses of Corollary~\ref{cor_n1_plus_n2} imply:
\begin{eqnarray*}
 \LL(A(q,n_1+n_2,e)) &\geq &\LL(A(q,n_1,e)) + \LL(A(q,n_2,e)) - 1 \cr
                           &= & \left\lfloor n_1 \frac{q-1}{m} \right\rfloor + \left\lfloor n_2 \frac{q-1}{m} \right\rfloor + 1 \cr
                           &= &\left\lfloor n_1 \frac{q-1}{m} + n_2 \frac{q-1}{m} \right\rfloor +1 = \left\lfloor (n_1+n_2) \frac{q-1}{m} \right\rfloor +1.
\end{eqnarray*}
Thus the result follows from
\cite[Theorem~\refPartI{thm_upper_and_lower_bounds_I}~(i)]{BHHK1}.
\end{proof}

\begin{rem}\label{rem_ll_for_rn_from_ll_for_n}
Suppose that $m := m(q,e) \mid n(q-1)$ and $e$ divides $q^n-1$.
Then Corollary~\ref{cor_n1_plus_n2} implies, by induction:
 If $\LL(A(q,n,e)) = \lfloor n \frac{q-1}{m} \rfloor + 1$ then $\LL(A(q,rn,e)) = \lfloor rn \frac{q-1}{m} \rfloor +1$, for $r \in \N$.
\end{rem}

These results lead to the following reduction:

\begin{prop}\label{prop_reduction_ll_for_n}
 Let $q,e \in \N$ such that $q>1$ and $\gcd(q,e) = 1$. Moreover, let $m := m(q,e)$, and let $N \in \N$ such that $\ord_e(q) \mid N$ and
 $m \mid N(q-1)$. If $\LL(A(q,n,e)) = \lfloor  n \frac{q-1}{m} \rfloor + 1$ for all $n \in \N$ with $\ord_e(q) \mid n \leq N$ then
 $\LL(A(q,n,e)) = \lfloor n \frac{q-1}{m} \rfloor +1$ for all $n \in \N$ with $e \mid q^n-1$. 
\end{prop}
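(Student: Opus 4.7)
The plan is to argue by strong induction on $n$, using Corollary~\ref{cor_n1_plus_n2} as the main engine. Let $d := \mathrm{ord}_e(q)$, so that the condition $e \mid q^n-1$ is equivalent to $d \mid n$. By hypothesis, $d \mid N$, and we already have equality in the Loewy length formula for every multiple of $d$ not exceeding $N$. I would prove the statement for all multiples of $d$ by induction, splitting off a summand of size $N$ at each step.

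For the base case, if $n \leq N$ is a multiple of $d$, the conclusion is immediate from the assumption.

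For the inductive step, suppose $n > N$ with $d \mid n$, and that the conclusion holds for every smaller multiple of $d$. Set $n_1 := N$ and $n_2 := n - N$; then $d \mid n_2$ and $1 \leq n_2 < n$, so $e \mid q^{n_2}-1$. By the hypothesis on $N$, $\LL(A(q,N,e)) = \lfloor N \frac{q-1}{m} \rfloor + 1$, and by the inductive hypothesis (applied to $n_2$, whether $n_2 \leq N$ or $n_2 > N$), $\LL(A(q,n_2,e)) = \lfloor n_2 \frac{q-1}{m} \rfloor + 1$. Since we are given $m \mid N(q-1) = n_1(q-1)$, Corollary~\ref{cor_n1_plus_n2} applies and gives
\[
   \LL(A(q,n,e)) = \LL(A(q,n_1+n_2,e)) = \left\lfloor (n_1+n_2) \frac{q-1}{m} \right\rfloor + 1 = \left\lfloor n \frac{q-1}{m} \right\rfloor + 1,
\]
completing the induction.

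There is no real obstacle here; the proposition is essentially a repackaging of Corollary~\ref{cor_n1_plus_n2}, whose hypotheses are precisely what allow us to glue a block of length $N$ onto any shorter admissible length without losing the equality in the upper bound. The only point to be careful about is ensuring that the divisibility condition $m \mid n_1(q-1)$ of the corollary is met at every inductive step, which is automatic from choosing $n_1 = N$ together with the standing assumption $m \mid N(q-1)$.
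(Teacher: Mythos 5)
Your proof is correct and follows essentially the same route as the paper: both arguments reduce everything to Corollary~\ref{cor_n1_plus_n2} with the block of length $N$ (whose divisibility hypothesis $m \mid N(q-1)$ is given) glued onto a shorter admissible length. The paper writes $n = aN + r$ and invokes Remark~\ref{rem_ll_for_rn_from_ll_for_n} for the $aN$ part, which is just your induction packaged differently.
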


\begin{proof}
Suppose that $\LL(A(q,n,e)) = \lfloor n \frac{q-1}{m} \rfloor + 1$ for all $n \in \N$ with $\ord_e(q) \mid n \leq N$.
Moreover, let $n \in \N$ with $e \mid q^n-1$ and $n > N$. Then there are $a,r \in \N$ such that $n = aN+r$ and $1 \leq r \leq N$. Thus 
$1 \equiv q^n \equiv (q^N)^aq^r \equiv q^r \pmod{e}$.
By our assumption and Remark~\ref{rem_ll_for_rn_from_ll_for_n},
this implies that $\LL(A(q,r,e)) = \lfloor r \frac{q-1}{m}
\rfloor + 1$ and $\LL(A(q,aN,e)) = \lfloor aN \frac{q-1}{m} \rfloor + 1$. Hence $\LL(A(q,n,e)) = \lfloor (aN+r) \frac{q-1}{m} \rfloor + 1
= \lfloor n \frac{q-1}{m}\rfloor + 1$,
by Corollary~\ref{cor_n1_plus_n2}.
\end{proof}

It is easy to see that one may take $N := \frac{m}{d} \ord_e(q)$ where $d := \gcd(m,(q-1)\ord_e(q))$, so that $N \mid \frac{m}{e_1} \ord_e(q)$ where $e_1 := \gcd(e,q-1)$.

\begin{prop}\label{prop_factorization_into_k_monomials}
Let $q, Q, e \in\N$ such that $\gcd(q,e) = 1 = \gcd(Q,e)$, and suppose that $q + e \Z$ and $Q + e \Z$ generate the same subgroup
of $(\Z/e\Z)^{\times}$.
Let $n$ be a multiple of $\ord_e(q)$ and $1 \leq a < \min\{ q, Q \}$.
Then $0 \not= (x_1 x_2 \cdots x_n)^a \in \J(A(q,n,e))$
is a product of $k$ monomials in $\J(A(q,n,e))$
if and only if 
$0 \not= (x_1 x_2 \cdots x_n)^a \in \J(A(Q,n,e))$
is a product of $k$ monomials in $\J(A(Q,n,e))$.
\end{prop}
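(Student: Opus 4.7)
The plan is to transport factorizations between $A(Q,n,e)$ and $A(q,n,e)$ via a suitable permutation of the variable indices. Since $q + e\Z$ and $Q + e\Z$ generate the same subgroup $H$ of $(\Z/e\Z)^\times$, we have $o := \ord_e(q) = \ord_e(Q) = |H|$, and by hypothesis $n$ is a multiple of $o$; write $n = so$. The multisets $\{q^{l-1} + e\Z : 1 \leq l \leq n\}$ and $\{Q^{l-1} + e\Z : 1 \leq l \leq n\}$ each contain every element of $H$ with multiplicity $s$, so there exists $\sigma \in S_n$ with
\[
 q^{\sigma(l)-1} \equiv Q^{l-1} \pmod{e}, \quad l = 1, \ldots, n.
\]

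I would then reformulate factorizations in exponent-vector language. A basis monomial in $\J(A(q,n,e))$ is $x_1^{i_1} \cdots x_n^{i_n}$ with $(i_1, \ldots, i_n) \in \N_0^n \setminus \{0\}$, $i_l \leq q-1$, and $\sum_l i_l q^{l-1} \equiv 0 \pmod{e}$, and analogously for $Q$. Writing $(x_1 \cdots x_n)^a$ as a product of $k$ basis monomials in $\J(A(q,n,e))$ therefore amounts to decomposing the constant exponent vector $(a, \ldots, a)$ as a sum of $k$ nonzero vectors in $\N_0^n$, each satisfying the $q$-congruence and the upper bound $i_l \leq q-1$. The crucial point is that the hypothesis $a < \min\{q, Q\}$ forces every entry $i_{r,l}$ in any such decomposition to satisfy $i_{r,l} \leq a < \min\{q, Q\}$, so the upper-bound constraint is vacuous in both algebras and only the congruence condition remains.

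Given a valid $Q$-decomposition $(a, \ldots, a) = \sum_{r=1}^k (i_{r,1}, \ldots, i_{r,n})$, I would set $i'_{r,m} := i_{r, \sigma^{-1}(m)}$ and compute
\[
 \sum_{m=1}^n i'_{r,m}\, q^{m-1} = \sum_{l=1}^n i_{r,l}\, q^{\sigma(l)-1} \equiv \sum_{l=1}^n i_{r,l}\, Q^{l-1} \equiv 0 \pmod{e},
\]
so each permuted vector satisfies the $q$-congruence. The sum $\sum_r (i'_{r,1}, \ldots, i'_{r,n})$ still equals $(a, \ldots, a)$ because this vector is invariant under coordinate permutations, and each permuted vector is nonzero iff the original is. The construction is reversible (the inverse uses $\sigma^{-1}$ in place of $\sigma$), which delivers both directions of the ``if and only if''. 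Taking $k=1$ in the same argument also shows that $(x_1 \cdots x_n)^a$ belongs to $\J(A(q,n,e))$ and is nonzero iff the corresponding statement holds in $A(Q,n,e)$, so the two hypotheses are consistent.

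The main point requiring care, rather than a serious obstacle, is the role of the bound $a < \min\{q, Q\}$: it is precisely what makes the two algebra-dependent upper bounds $i_l \leq q-1$ and $i_l \leq Q-1$ simultaneously vacuous, so the permutation $\sigma$ suffices to carry valid decompositions back and forth without any additional adjustment.
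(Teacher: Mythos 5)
Your proposal is correct and follows essentially the same route as the paper: both construct a permutation of the indices matching the powers of $q$ and $Q$ modulo $e$ (which exists because the two elements generate the same subgroup and $n$ is a multiple of the common order) and transport factorizations by permuting exponent vectors. Your explicit remark that $a < \min\{q,Q\}$ makes the upper bounds $i_l \leq q-1$ and $i_l \leq Q-1$ vacuous is a point the paper leaves implicit, but the argument is the same.
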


\begin{proof}
There is a permutation $\pi$ of $\{ 0, 1, \ldots, n-1 \}$
such that $q^t \equiv Q^{\pi(t)} \pmod{e}$ for $t = 0, 1, \ldots, n-1$.
For any choice of exponents $a_0, a_1, \ldots, a_{n-1}$
in $\{ 0, 1, \ldots, a \}$,
we have $x_1^{a_0} x_2^{a_1} \cdots x_n^{a_{n-1}} \in \J(A(Q,n,e))$
if and only if 
$0 \equiv \sum_{i=0}^{n-1} a_i Q^i \equiv \sum_{i=0}^{n-1} a_{\pi(i)} q^i \pmod{e}$,
which happens if and only if
$x_1^{a_{\pi(0)}} x_2^{a_{\pi(1)}} \cdots x_n^{a_{\pi(n-1)}} \in \J(A(q,n,e))$.
Given a factorization of $(x_1 x_2 \cdots x_n)^a$ into $k$ factors
in $\J(A(Q,n,e))$,
permuting the exponents of each factor with $\pi$ yields a factorization
into $k$ factors in $\J(A(q,n,e))$.
The same argument works in the other direction.
\end{proof}

\begin{rem}\label{rem_Q_congruent_q_mod_m}
Suppose that we are in the situation of
Proposition~\ref{prop_factorization_into_k_monomials},
that is, $q + e \Z$ and $Q + e \Z$ generate the same subgroup
of $(\Z/e\Z)^{\times}$.
If $Q = q + m a$, with $m = m(q,e) = m(Q,e)$ and $a \in \N$,
then we have
\[
   \left\lfloor \frac{n (Q-1)}{m} \right\rfloor =
   \left\lfloor \frac{n (q-1)}{m} \right\rfloor + n a.
\]
Setting $t = \LL( A(q,n,e) ) - 1$,
we have $0 \neq (x_1 x_2 \cdots x_n)^{q-1} \in \J( A(q,n,e) )^t$.
By Proposition~\ref{prop_factorization_into_k_monomials}, this implies
$$ 0 \neq (y_1 \ldots y_n)^{Q-1} = (y_1 \ldots y_n)^{q-1}(y_1\ldots y_n)^{ma} \in \J(A(Q,n,e))^t \J(A(Q,n,e))^{na}$$
where we consider $A(Q,n,e)$ as a subalgebra of $F[y_1,\ldots,y_n] = F[Y_1, \ldots, Y_n]/(Y_1^q,\ldots,Y_n^q)$.
Thus $\LL( A(Q,n,e) ) \geq t + 1 + n a = \LL( A(q,n,e) ) + n a$.
This implies:
If the upper bound from
\cite[Theorem~\refPartI{thm_upper_and_lower_bounds_I}]{BHHK1}
is attained for $A(q,n,e)$ then it is also attained for $A(Q,n,e)$.
\end{rem}

\begin{prop}\label{prop_ll_for_shifted_q}
Let $q,n,e,m$ be as usual, and let $l := \lcm(e,m)$. Then 
$$\LL(A(q+l,n,e)) \geq \frac{nl}{m} + \LL(A(q,n,e)).$$
Thus, if $\LL(A(q,n,e)) = \lfloor n \frac{q-1}{m} \rfloor + 1$ then $\LL(A(q+l,n,e)) = \lfloor n \frac{q+l-1}{m} \rfloor + 1$.
\end{prop}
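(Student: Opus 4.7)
The plan is to deduce both statements directly from Remark~\ref{rem_Q_congruent_q_mod_m} applied with $Q := q+l$.

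First, I would verify the hypotheses of Remark~\ref{rem_Q_congruent_q_mod_m}. Since $e \mid l = \lcm(e,m)$, we have $Q = q+l \equiv q \pmod{e}$, so $Q + e\Z$ and $q + e\Z$ are literally the same element of $(\Z/e\Z)^\times$, and therefore generate the same cyclic subgroup. By \cite[Lemma~\refPartI{same_m_for_same_subgroup}]{BHHK1}, this implies $m(Q,e) = m(q,e) = m$. Moreover, $e \mid q^n-1$ combined with $Q \equiv q \pmod e$ yields $e \mid Q^n-1$, so $A(Q,n,e)$ is defined. Since $m \mid l$, we may write $l = ma$ with $a = l/m \in \N$, and then $Q = q + ma$, exactly matching the setup of the remark.

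Now applying Remark~\ref{rem_Q_congruent_q_mod_m} directly yields
\[
   \LL(A(Q,n,e)) \geq \LL(A(q,n,e)) + na = \LL(A(q,n,e)) + \frac{nl}{m},
\]
which is the first inequality.

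For the second statement, assume $\LL(A(q,n,e)) = \lfloor n(q-1)/m \rfloor + 1$. Since $m \mid l$, the number $nl/m$ is an integer and can be moved into the floor, giving
\[
   \LL(A(Q,n,e)) \geq \frac{nl}{m} + \left\lfloor n \frac{q-1}{m} \right\rfloor + 1
   = \left\lfloor n \frac{q+l-1}{m} \right\rfloor + 1.
\]
On the other hand, because $m(Q,e) = m$ and $e \mid Q^n-1$, the upper bound in \cite[Theorem~\refPartI{thm_upper_and_lower_bounds_I}]{BHHK1} gives $\LL(A(Q,n,e)) \leq \lfloor n(Q-1)/m \rfloor + 1$, so equality holds. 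There is no real obstacle here: the content of the proposition is essentially a packaging of Remark~\ref{rem_Q_congruent_q_mod_m} together with the observation that $m$ and the congruence class of $q$ mod $e$ are both preserved under shifting by $l$, and the only thing to check is that the shift $l$ is divisible by $m$ so that the inequality combines cleanly with the upper bound.
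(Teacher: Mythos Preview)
Your proof is correct. The paper's own proof carries out the same decomposition idea directly rather than citing Remark~\ref{rem_Q_congruent_q_mod_m}: it writes $(x_1\cdots x_n)^{q+l-1} = (x_1\cdots x_n)^{q-1}(x_1\cdots x_n)^{l}$, decomposes $(x_1\cdots x_n)^{l}$ into $nl/m$ cyclic shifts of a degree-$m$ monomial, and transfers a length-$t$ factorization of $(y_1\cdots y_n)^{q-1}$ from $A(q,n,e)$ to $A(q+l,n,e)$ using only $q+l\equiv q\pmod{e}$. Your route simply recognizes that this is exactly the content of Remark~\ref{rem_Q_congruent_q_mod_m} in the special case where $Q+e\Z=q+e\Z$ (so the permutation $\pi$ of Proposition~\ref{prop_factorization_into_k_monomials} is the identity), which is a perfectly clean way to package the argument.
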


\begin{proof}
We write the nonzero monomial $x_1^{q+l-1} \ldots x_n^{q+l-1} \in A(q+l,n,e)$ as a product of the two monomials $x_1^{q-1} \ldots x_n^{q-1}$ and
$x_1^l \ldots x_n^l$ in $A(q+l,n,e)$. Let $x_1^{i_1} \ldots x_n^{i_n}$ be a monomial of degree $m := m(q,e) = m(q+l,e)$. The product of the $n$ cyclic shifts of
$x_1^{i_1} \ldots x_n^{i_n}$ is $x_1^m \ldots x_n^m$, so that $x_1^l \ldots x_n^l = (x_1^m \ldots x_n^m)^{l/m}$ is a product of $\frac{nl}{m}$ monomials in 
$\J(A(q+l,n,e))$. 

In order to distinguish between $A(q,n,e)$ and $A(q+l,n,e)$ we consider $A(q,n,e)$ as a subalgebra of $F[y_1,\ldots,y_n] = F[Y_1, \ldots, Y_n]/(Y_1^q,\ldots,Y_n^q)$.
The nonzero monomial $y_1^{q-1} \ldots y_n^{q-1} \in A(q,n,e)$ can be written as a product of $t := \LL(A(q,n,e)) -1$ monomials in $\J(A(q,n,e))$.
Thus the monomial $x_1^{q-1} \ldots x_n^{q-1} \in A(q+l,n,e)$ can be written as a product of the corresponding $t$ monomials in $\J(A(q+l,n,e))$; note that
$$j_1 + (q+l)j_2 + \ldots + (q+l)^{n-1}j_n \equiv j_1+ qj_2 + \ldots + q^{n-1}j_n \pmod{e}$$
for $j_1,\ldots,j_n \in \Z$. Thus $x_1^{q+l-1} \ldots x_n^{q+l-1}$ can be written as a product of $\frac{nl}{m} + t$ monomials in $\J(A(q+l,n,e))$. 
Hence
$$\LL(A(q+l,n,e)) \geq \frac{nl}{m} + t + 1 = \frac{nl}{m} + \LL(A(q,n,e)).$$
If $\LL(A(q,n,e)) = \lfloor n \frac{q-1}{m} \rfloor + 1$ then this implies 
$$\LL(A(q+l,n,e)) \geq \frac{nl}{m} + \left\lfloor n \frac{q-1}{m} \right\rfloor + 1 = \left\lfloor \frac{nl}{m} + n \frac{q-1}{m} \right\rfloor + 1 = \left\lfloor n \frac{q+l-1}{m} \right\rfloor + 1,$$
and the result follows from
\cite[Theorem~\refPartI{thm_upper_and_lower_bounds_I}]{BHHK1}.
\end{proof}

\begin{rem}\label{rem_reduction_for_fixed_e_to_finitely_many_q}
Let $n,e,Q \in \N$ such that $Q \not\equiv 1 \equiv Q^n \pmod{e}$, and let $m := m(Q,e)$.
Then, by Proposition~\ref{prop_ll_for_shifted_q},
we have $\LL(A(q,n,e)) =
\lfloor n \frac{q-1}{m} \rfloor + 1$ for all $q \in \N$ with $q \equiv Q \pmod{e}$ if and only if $\LL(A(q,n,e)) = \lfloor n \frac{q-1}{m} \rfloor + 1$
for all $q \in \N$ with $q \equiv Q \pmod{e}$ and $q \leq \lcm(e,m)$.
\end{rem}

\begin{prop}
Let $q,n,e$ be as usual, and let $f \mid e$.
Then $A(q,n,e)$ can be viewed as a subalgebra of $A(q,n,f)$;
in particular, $\LL(A(q,n,e)) \leq \LL(A(q,n,f))$. 
\end{prop}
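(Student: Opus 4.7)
The plan is to verify the subalgebra inclusion directly from the monomial basis description, and then deduce the Loewy length inequality by comparing powers of the radicals.

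First I would note that the hypothesis $e \mid q^n - 1$ together with $f \mid e$ forces $f \mid q^n-1$, so $A(q,n,f)$ is defined. Both algebras live inside the common ambient algebra $F[x_1,\ldots,x_n] = F[X_1,\ldots,X_n]/(X_1^q,\ldots,X_n^q)$. The explicit monomial basis of $A(q,n,e)$ recalled in the introduction consists of those monomials $x_1^{i_1}\cdots x_n^{i_n}$ with $0 \leq i_j < q$ and
\[
 i_1 + q i_2 + \cdots + q^{n-1} i_n \equiv 0 \pmod{e}.
\]
Since $f \mid e$, any such tuple $(i_1,\ldots,i_n)$ also satisfies the congruence modulo $f$, so every basis monomial of $A(q,n,e)$ is a basis monomial of $A(q,n,f)$. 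Hence $A(q,n,e) \subseteq A(q,n,f)$ as $F$-subalgebras (sharing the same identity element $1 = x_1^0 \cdots x_n^0$).

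Next I would use that both algebras are local, with Jacobson radical spanned by the basis monomials different from $1$ (the maximal ideal generated by the images of the $x_j$ inside each algebra). In particular, every element of $A(q,n,e)$ with zero constant term lies in $\J(A(q,n,f))$, so
\[
 \J(A(q,n,e)) \subseteq \J(A(q,n,f)).
\]
Multiplying, $\J(A(q,n,e))^k \subseteq \J(A(q,n,f))^k$ for every $k \in \N$. If $l = \LL(A(q,n,f))$ then $\J(A(q,n,f))^l = 0$, hence $\J(A(q,n,e))^l = 0$ as well, giving $\LL(A(q,n,e)) \leq l = \LL(A(q,n,f))$.

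There is no real obstacle here; the only point that deserves a sentence of justification is the identification of the Jacobson radical with the span of nonconstant basis monomials, which follows from locality together with the fact that each $x_j$ is nilpotent and $1$ is the only basis element of degree $0$. Once that is in place, both the subalgebra statement and the Loewy length inequality are immediate from the monomial basis.
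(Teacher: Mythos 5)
Your proposal is correct and follows essentially the same route as the paper: the congruence modulo $e$ implies the congruence modulo $f$, giving the subalgebra inclusion, whence $\J(A(q,n,e)) \subseteq \J(A(q,n,f))$ and the Loewy length inequality. The extra sentence justifying that the radical is spanned by the nonconstant basis monomials is a fine (if implicit in the paper) addition.
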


\begin{proof}
 By definition, $A(q,n,e)$ is the $F$-subalgebra of $F[x_1,\ldots,x_n]$ generated by all monomials $x_1^{i_1} \ldots x_n^{i_n}$ such that $i_1 + q i_2 + \ldots + q^{n-1}i_n \equiv 0 \pmod{e}$.
 (Recall that $x_1^q = \ldots = x_n^q = 0$.) Similarly, $A(q,n,f)$ is the $F$-subalgebra of $F[x_1,\ldots,x_n]$ generated by all monomials $x_1^{i_1} \ldots x_n^{i_n}$ such that $i_1 + qi_2 + \ldots +
 q^{n-1}i_n \equiv 0 \pmod{f}$. Thus $A(q,n,e) \subseteq A(q,n,f)$ and $\J(A(q,n,e)) \subseteq \J(A(q,n,f))$; in particular, $\LL(A(q,n,e)) \leq \LL(A(q,n,f))$.
\end{proof}

\begin{prop}
Let $q,n,e$ be as usual, and let $k \in \N$. Then
\[
   \LL(A(q,kn,e)) \leq \LL(A(q^k,n,e)).
\]
\end{prop}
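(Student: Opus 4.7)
The plan is to exploit a natural $F$-linear bijection between the two algebras, induced by $q$-adic expansion, that is not an algebra homomorphism but is \emph{multiplicative on nonzero products}.

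Recall that $A(q,kn,e)$ has $F$-basis the monomials $x_1^{i_1}\cdots x_{kn}^{i_{kn}}$ with $0\leq i_j<q$ and $\sum_{j=1}^{kn} i_j q^{j-1}\equiv 0\pmod e$, while $A(q^k,n,e)$ has $F$-basis the monomials $y_1^{j_1}\cdots y_n^{j_n}$ with $0\leq j_l<q^k$ and $\sum_{l=1}^n j_l (q^k)^{l-1}\equiv 0\pmod e$. Given $(i_1,\ldots,i_{kn})$, define $j_l:=\sum_{t=0}^{k-1} q^t\, i_{(l-1)k+t+1}$ for $l=1,\ldots,n$; this is just the $q$-adic expansion of $j_l<q^k$. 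A direct computation shows $\sum_j i_j q^{j-1}=\sum_l j_l q^{k(l-1)}$, so this sets up a bijection between the two indexing sets, and hence an $F$-linear isomorphism $\psi\colon A(q,kn,e)\to A(q^k,n,e)$ that sends basis monomials to basis monomials and nonconstant monomials to nonconstant monomials.

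The key step is to show: if $a=x_1^{i_1}\cdots x_{kn}^{i_{kn}}$ and $a'=x_1^{i'_1}\cdots x_{kn}^{i'_{kn}}$ are basis monomials of $A(q,kn,e)$ with $aa'\neq 0$, then $\psi(a)\psi(a')=\psi(aa')$ (in particular, $\psi(a)\psi(a')\neq 0$ in $A(q^k,n,e)$). Indeed, $aa'\neq 0$ means $i_j+i'_j<q$ for all $j$, so in the $q$-adic addition $j_l+j'_l=\sum_{t=0}^{k-1} q^t(i_{(l-1)k+t+1}+i'_{(l-1)k+t+1})$ no digit carries occur and hence $j_l+j'_l\leq q^k-1<q^k$ for every $l$. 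Thus $\psi(a)\psi(a')=\prod_l y_l^{j_l+j'_l}$, which coincides with $\psi(aa')$ by the very definition of $\psi$ and the linearity of the digit assignment.

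By iterating this observation, whenever nonconstant basis monomials $a_1,\ldots,a_r$ in $A(q,kn,e)$ satisfy $a_1\cdots a_r\neq 0$, the images $\psi(a_1),\ldots,\psi(a_r)$ lie in $\J(A(q^k,n,e))$ and their product equals $\psi(a_1\cdots a_r)\neq 0$. Since $\J(A(q,kn,e))$ is spanned by nonconstant basis monomials, setting $L:=\LL(A(q,kn,e))$ and picking $a_1,\ldots,a_{L-1}$ witnessing $\J(A(q,kn,e))^{L-1}\neq 0$ yields $\J(A(q^k,n,e))^{L-1}\neq 0$, whence $\LL(A(q^k,n,e))\geq L$. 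The only delicate point is the no-carry argument: one must resist the temptation to promote $\psi$ to a ring map (it is not one — in the opposite direction a carry can create a new nonzero product), but the asymmetric fact that \emph{absence of overflow} in $A(q,kn,e)$ forces absence of overflow in $A(q^k,n,e)$ is exactly what drives the inequality in the claimed direction.
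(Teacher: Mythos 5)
Your proof is correct and follows essentially the same route as the paper: both arguments rest on the observation that grouping $k$ consecutive $q$-adic digits into one $q^k$-adic digit identifies the two bases, and that the absence of digit overflow in $A(q,kn,e)$ forces the absence of overflow in $A(q^k,n,e)$, so a nonzero product of $L-1$ radical monomials transports to one in $A(q^k,n,e)$. The paper phrases this via the $q$- and $Q$-adic expansions of the residues $re$ rather than via your map $\psi$, but the content is identical.
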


\begin{proof}
 Note first that both $A(q,kn,e)$ and $A(q^k,n,e)$ have dimension $z+1$ where
 $$z := (q^{kn}-1)/e.$$ 
 We set $Q := q^k$ and denote the standard bases of $A(q,kn,e)$ and $A(Q,n,e)$ by $b_0,b_1, \ldots, b_z$ and $B_0,
 B_1,\ldots,B_z$, respectively. Now let $r,s \in \{0,1,\ldots,z\}$, and consider the $Q$-adic expansions 
 $$re = \sum_{t=1}^n Q^{t-1}I_t \quad \hbox{and} \quad se = \sum_{t=1}^n Q^{t-1}J_t$$
 ($I_t,J_t \in \{0,1,\ldots,Q-1\}$ for $t=1,\ldots,n$). Then $I_t$ and $J_t$ have $q$-adic expansions
 $$I_t = \sum_{u=1}^k q^{u-1}i_{tu} \quad \hbox{and} \quad J_t = \sum_{u=1}^k q^{u-1}j_{tu}$$
 ($i_{tu},j_{tu} \in \{0,1,\ldots,q-1\}$ for $t=1,\ldots,n$ and $u=1,\ldots,k$). Thus the $q$-adic expansions of $re$ and $se$ are 
 $$re = \sum_{t=1}^n \sum_{u=1}^k q^{u-1+k(t-1)}i_{tu} \quad \hbox{and} \quad se = \sum_{t=1}^n \sum_{u=1}^k q^{u-1 + k(t-1)} j_{tu}.$$
 If $b_rb_s \neq 0$ then $i_{tu} + j_{tu} < q$ for all $t,u$.
But then also $I_t + J_t < Q$ for all $t$, i.e. $B_r B_s \neq 0$.
Similarly,
$b_{r_1} \cdots b_{r_v} \neq 0$ implies $B_{r_1} \cdots B_{r_v} \neq 0$.
This shows that $\LL(A(q,kn,e)) \leq \LL(A(Q,n,e))$.
\end{proof}

\begin{rem}
In general, $A(q,kn,e)$ and $A(q^k,n,e)$ are not isomorphic.
Computational experiments show that
if they have the same Loewy vector and if $z \leq 10\,000$ holds
then mapping the basis vectors $b_i$ to the corresponding basis vectors $B_i$
defines an isomorphism.
\end{rem}

Next we improve a little on
\cite[Theorem~\refPartI{thm_upper_and_lower_bounds_I}~(ii)]{BHHK1}.

\begin{prop}\label{prop_improved_lower_bound}
 Let $q,n,e$ be as usual, and let $\nu := \ord_e(q)$. If $m := m(q,e) \nmid q-1$ then $\LL(A(q,n,e)) \geq n \lfloor \frac{q-1}{m} \rfloor +
 \frac{n}{\nu} +1$.
\end{prop}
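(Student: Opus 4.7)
The plan is to establish the bound first in the special case $n = \nu := \ord_e(q)$ and then to bootstrap to general $n$ using Proposition~\ref{prop_n1_plus_n2}. Write $q-1 = am + r$ with $a := \lfloor (q-1)/m \rfloor$; the hypothesis $m \nmid q-1$ forces $1 \leq r < m$. Once I have shown $\LL(A(q,\nu,e)) \geq \nu a + 2$, applying Proposition~\ref{prop_n1_plus_n2} with multiplicity $n/\nu \in \N$ (an integer because $\nu = \ord_e(q)$ divides $n$) yields
\[
   \LL(A(q,n,e)) \geq \frac{n}{\nu} \LL(A(q,\nu,e)) - \frac{n}{\nu} + 1 \geq na + \frac{n}{\nu} + 1,
\]
which is the required inequality.

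For the special case $n = \nu$, I want to exhibit a factorization of the nonzero element $(x_1 \cdots x_\nu)^{q-1}$ into $\nu a + 1$ monomials lying in the Jacobson radical $\J := \J(A(q,\nu,e))$. Fix a minimum-weight monomial $M = x_1^{i_1} \cdots x_\nu^{i_\nu} \in \J$, so that $i_1 + \cdots + i_\nu = m$ and $i_1 + q i_2 + \cdots + q^{\nu-1} i_\nu \equiv 0 \pmod e$. The $\nu$ cyclic shifts of $M$ all lie in $\J$, and multiplying them together gives $(x_1 \cdots x_\nu)^m$; the $a$-fold iteration then presents $(x_1 \cdots x_\nu)^{am}$ as a product of $\nu a$ radical monomials. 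Multiplying by the single remaining factor $(x_1 \cdots x_\nu)^r$ reconstructs $(x_1 \cdots x_\nu)^{q-1}$, so the proof reduces to checking that $(x_1 \cdots x_\nu)^r$ itself lies in $\J$.

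This is the crux of the argument. Setting $S := 1 + q + q^2 + \cdots + q^{\nu-1}$, the exponent combination of $(x_1 \cdots x_\nu)^r$ is exactly $rS$, so the task is to verify $e \mid rS$. Two divisibilities are immediate: $e \mid (q-1) S = q^\nu - 1$ by the definition of $\nu$, and $e \mid mS$ because $(x_1 \cdots x_\nu)^m$, being a product of elements of $A(q,\nu,e)$, must itself lie in $A(q,\nu,e)$, forcing its exponent combination $mS$ to be a multiple of $e$. Subtracting gives $e \mid (q-1) S - a m S = r S$, so $(x_1 \cdots x_\nu)^r \in A(q,\nu,e)$; since $r \geq 1$ it lies in $\J$. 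Recognising this divisibility $e \mid rS$ is the only real obstacle; once it is in place, the factorization delivers $\LL(A(q,\nu,e)) \geq \nu a + 2$, and the reduction via Proposition~\ref{prop_n1_plus_n2} completes the proof.
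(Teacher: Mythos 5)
Your argument is correct, and the reduction step is exactly the paper's: both proofs write $n = \nu r$ and feed a lower bound for $\LL(A(q,\nu,e))$ into Proposition~\ref{prop_n1_plus_n2} to get $n\lfloor\frac{q-1}{m}\rfloor + \frac{n}{\nu} + 1$. Where you diverge is the base case $n=\nu$: the paper simply quotes \cite[Theorem~\refPartI{thm_upper_and_lower_bounds_I}~(ii)]{BHHK1} for $\LL(A(q,\nu,e)) \geq \nu\lfloor\frac{q-1}{m}\rfloor + 2$, whereas you re-derive that inequality from scratch via the factorization $(x_1\cdots x_\nu)^{q-1} = (x_1\cdots x_\nu)^{am}\,(x_1\cdots x_\nu)^{r}$, using the $\nu a$ cyclic shifts of a degree-$m$ monomial together with the observation that $e \mid rS$ for $S = 1+q+\cdots+q^{\nu-1}$, so that the leftover factor $(x_1\cdots x_\nu)^r$ is itself in the radical. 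This divisibility $e \mid rS$ is the genuinely nice extra ingredient in your write-up; the paper's proof is shorter because it outsources exactly this point to Part~I, while yours is self-contained within the toolkit of this paper (cyclic shifts plus the basis description of $A(q,\nu,e)$).

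One small polish: your justification of $e \mid mS$ (``the nonzero monomial $(x_1\cdots x_\nu)^m$ lies in the algebra, so its exponent combination $mS$ is $\equiv 0 \bmod e$'') tacitly assumes $m \leq q-1$; if $m > q-1$ that monomial is zero and has no exponent combination. This costs nothing, since in that case $a = 0$, $r = q-1$, and $e \mid rS = q^\nu - 1$ is immediate; alternatively, $e \mid mS$ holds unconditionally because by definition of $m$ there are exponents with $e \mid q^{a_1}+\cdots+q^{a_m}$, and multiplying by $S$ and using $q^jS \equiv S \pmod{e}$ (as $e \mid q^\nu-1$) gives $e \mid mS$. With that remark added, your proof is complete.
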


\begin{proof}
Let $r \in \N$ such that $n = \nu r$.
Then Proposition~\ref{prop_n1_plus_n2} and
\cite[Theorem~\refPartI{thm_upper_and_lower_bounds_I}~(ii)]{BHHK1} imply:
\begin{eqnarray*}
\LL(A(q,n,e)) &\geq &r \cdot \LL(A(q,\nu,e))-r+1 \geq r \left(\nu \left\lfloor \frac{q-1}{m} \right\rfloor + 2 \right)-r+1 \cr
&= &n \left\lfloor \frac{q-1}{m} \right\rfloor +r+1.
\end{eqnarray*}
\end{proof}

\begin{rem}\label{rem_upper_bound_equals_lower_bound}
If $m = m(q,e) \nmid q-1$ then,
by \cite[Theorem~\refPartI{thm_upper_and_lower_bounds_I}]{BHHK1}
and Proposition~\ref{prop_improved_lower_bound}, we have
 $$n \left\lfloor \frac{q-1}{m} \right\rfloor + \frac{n}{\nu} + 1 \leq \LL(A(q,n,e)) \leq \left\lfloor n \frac{q-1}{m} \right\rfloor + 1.$$
 Let $a,r \in \N_0$ such that $q-1 = am+r$ and $0 \leq r < m$. It is routine to check that the upper and lower bound for $\LL(A(q,n,e))$ 
 coincide if and only if $r < \frac{m}{\nu} + \frac{m}{n}$. Thus in this case we have $\LL(A(q,n,e)) = \lfloor n \frac{q-1}{m} \rfloor + 1$.
\end{rem}

Here is an example where $(x_1 x_2 \cdots x_n)^{q-1}$ can be
decomposed into $\lfloor n (q-1) / m\rfloor$ monomials in $\J( A(q,n,e) )$
but where no factor of degree $m$ can occur.
(There are not very many such examples of dimension up to $10\,000$,
and this is the example of smallest dimension.)

\begin{example}
Let $q = 55$ and $e = (q^n-1) / 123 = 680\,763\,722\,688$.
Then $n = \ord_e(q) = 8$ and $m = m(q,e) = 126$.

We have $(x_1 x_2 \cdots x_8)^{18} \in \J( A(q,n,e) )$,
which implies that $\LL( A(q,n,e) ) \geq 4$ holds.
Since this is equal to the upper bound $\lfloor n (q-1) / m \rfloor + 1$,
we have equality.

The monomials in $\J( A(q,n,e) )$ have the degrees (and multiplicities)
126 (8), 144 (1), 198 (32), 216 (40), 234 (32), 288 (1), 306 (8),
and 432 (1).
Thus monomials of degree larger than $144$ cannot occur
in a decomposition of $(x_1 x_2 \cdots x_8)^{q-1}$ into three factors.
Hence monomials of degree 126 cannot occur in such a decomposition, either.

This means that no monomial of degree $m$ can occur in a decomposition
of $(x_1 x_2 \cdots x_8)^{q-1}$ into three factors.
\end{example}

\begin{prop}\label{decomp_into_1}
Let $q,q',e,n \in \N$ such that $q>1$, $e \mid q^n-1$ and $q' \equiv q \pmod{m}$ where $m := m(q,e)$. 
If $\lfloor n (q'-1) / m \rfloor = 1$ then $\LL( A(q,n,e) ) = \lfloor n (q-1) / m \rfloor + 1$.
\end{prop}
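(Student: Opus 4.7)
The plan is to complement the upper bound $\LL(A(q,n,e)) \leq \lfloor n(q-1)/m \rfloor + 1$ from \cite[Theorem~\refPartI{thm_upper_and_lower_bounds_I}]{BHHK1} with a matching lower bound obtained from an explicit factorization of the ``diagonal'' monomial $y := x_1 x_2 \cdots x_n$ raised to the power $q-1$. First I would observe that $q' \leq q$: since $q > 1$ forces $z = (q^n-1)/e \geq 1$ and hence $\LL(A(q,n,e)) \geq 2$, we have $\lfloor n(q-1)/m \rfloor \geq 1$, so if $q' > q$ then $q' = q + km$ with $k \geq 1$ would give $\lfloor n(q'-1)/m \rfloor \geq 1 + nk \geq 2$, contradicting the hypothesis. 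Writing $q = q' + cm$ with $c \in \N_0$ we obtain $\lfloor n(q-1)/m \rfloor = \lfloor n(q'-1)/m \rfloor + nc = nc+1$, so what remains is to show $\LL(A(q,n,e)) \geq nc + 2$.

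The strategy is to decompose $y^{q-1} = y^{q'-1} \cdot y^{cm}$ and express each factor as a product of elements of $\J := \J(A(q,n,e))$. For $y^{cm}$, by definition of $m = m(q,e)$ one can pick a monomial $x_1^{i_1} \cdots x_n^{i_n}$ of degree $m$ belonging to $\J$, and $y^m = x_1^m \cdots x_n^m$ is the product of the $n$ cyclic shifts of this monomial; all the shifts lie in $\J$ because $e \mid q^n-1$ makes the defining congruence invariant under multiplication of the exponent tuple by $q$ modulo $e$. Thus $y^{cm}$ is a product of $nc$ elements of $\J$.

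The key step, and the main obstacle, is to show that $y^{q'-1}$ itself lies in $\J$. Being nonzero and non-constant is immediate from $1 \leq q'-1 \leq q-1 < q$, so the essential point is membership in $A(q,n,e)$, which amounts to the divisibility $e \mid (q'-1) \cdot S$ where $S := 1 + q + q^2 + \cdots + q^{n-1}$. The crux is the auxiliary claim $e \mid mS$: if $q^{j_1} + q^{j_2} + \cdots + q^{j_m} \equiv 0 \pmod{e}$ is a minimal relation with exponents in $\{0,1,\ldots,n-1\}$, then multiplying by $q^t$ and reducing exponents modulo $n$ (using $q^n \equiv 1 \pmod{e}$) yields a relation for each $t$; the sum of all $n$ cyclic shifts equals $m \cdot (1 + q + \cdots + q^{n-1}) = mS$, which is therefore divisible by $e$. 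Given this, $(q'-1)S = (q-1)S - cmS = (q^n-1) - cmS \equiv 0 \pmod{e}$, establishing the membership.

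Combining the two decompositions, $y^{q-1}$ is a product of $nc+1$ elements of $\J$, hence $\J^{nc+1} \neq 0$ and $\LL(A(q,n,e)) \geq nc + 2 = \lfloor n(q-1)/m \rfloor + 1$, which together with the upper bound from Part~I yields the desired equality.
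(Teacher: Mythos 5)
Your proof is correct and follows essentially the same route as the paper: write $q = q' + cm$, split $(x_1\cdots x_n)^{q-1}$ as $(x_1\cdots x_n)^{cm}\cdot(x_1\cdots x_n)^{q'-1}$, decompose the first factor into $cn$ cyclic shifts of a degree-$m$ monomial, and note the second factor lies in $\J(A(q,n,e))$. The only difference is that you explicitly verify $(x_1\cdots x_n)^{q'-1}\in\J$ via the auxiliary divisibility $e \mid m(1+q+\cdots+q^{n-1})$, a detail the paper's proof states without justification.
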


\begin{proof}
Let $q = q' + a m$ for some nonnegative integer $a$.
We have to show that $(x_1 x_2 \cdots x_n)^{q-1} \in J = \J(A(q,n,e))$
is a product of
$\lfloor n (q-1) / m \rfloor = a n + 1$ monomials in $J$.
This follows from the facts that
$(x_1 x_2 \cdots x_n)^{am}$ is a product of $a n$ monomials in $J$
(take $a$ times the $n$ cyclic shifts of a monomial of degree $m$)
and that $(x_1 x_2 \cdots x_n)^{q'-1} \in J$.
\end{proof}

Our next result is similar to Remark~\ref{rem_ll_for_rn_from_ll_for_n}.

\begin{lemma}\label{lemma_multiples_of_n}
 Let $q,n,e \in \N$ such that $q>1$, $e>1$ and $e \mid q^n-1$. 
 Moreover, let $m = m(q,e)$, and suppose that $\LL(A(q,n,e)) = \lfloor n \frac{q-1}{m} \rfloor + 1$ and $n(q-1) \equiv 1 \pmod{m}$.
 Then $\LL(A(q,kn,e)) = \lfloor kn \frac{q-1}{m} \rfloor + 1$ for $k=1,\ldots,m-1$.
\end{lemma}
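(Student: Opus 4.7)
The plan is to squeeze $\LL(A(q,kn,e))$ between matching lower and upper bounds. Setting $a := \lfloor n(q-1)/m \rfloor$, the hypothesis $\LL(A(q,n,e)) = a+1$ together with the iterated version of Proposition~\ref{prop_n1_plus_n2} (the ``$r$-fold'' inequality stated there) gives
\[
   \LL(A(q,kn,e)) \;\geq\; k \cdot \LL(A(q,n,e)) - k + 1 \;=\; k(a+1) - k + 1 \;=\; ka+1
\]
for every $k \in \N$. On the other hand, the upper bound~(1), i.e.\ \cite[Theorem~\refPartI{thm_upper_and_lower_bounds_I}]{BHHK1}, yields
\[
   \LL(A(q,kn,e)) \;\leq\; \left\lfloor kn\frac{q-1}{m} \right\rfloor + 1.
\]

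The point is now purely arithmetical. The extra hypothesis $n(q-1) \equiv 1 \pmod{m}$ means we can write $n(q-1) = am + 1$ with $a \in \N_0$, so that $a = \lfloor n(q-1)/m \rfloor$ exactly. Multiplying by $k \in \{1,\ldots,m-1\}$ gives $kn(q-1) = kam + k$ with remainder $k$ satisfying $0 \le k \le m-1 < m$, whence
\[
   \left\lfloor kn\frac{q-1}{m} \right\rfloor \;=\; ka.
\]
Thus the upper bound also equals $ka+1$, and combining with the lower bound forces $\LL(A(q,kn,e)) = ka + 1 = \lfloor kn(q-1)/m \rfloor + 1$, as desired.

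There is no real obstacle here: once the congruence condition is turned into the equation $n(q-1) = am+1$, everything reduces to the observation that the floor function is linear on the relevant range $k \leq m-1$, so the lower bound obtained by iterating Proposition~\ref{prop_n1_plus_n2} happens to already match the general upper bound from Part~I. The only thing to be careful about is ensuring $k < m$ throughout, since for $k = m$ the remainder $k \bmod m$ jumps and the argument breaks (which is why the statement stops at $k = m-1$).
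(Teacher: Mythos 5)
Your proof is correct and follows essentially the same route as the paper: the paper writes out by hand the decomposition of $(x_1\cdots x_{kn})^{q-1}$ into $ka$ monomials from $k$ copies of the decomposition for $n$ variables, which is exactly what the iterated inequality of Proposition~\ref{prop_n1_plus_n2} that you cite encapsulates, and then both arguments match this lower bound against the upper bound from Part~I using the same floor computation from $n(q-1)=am+1$.
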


\begin{proof}
 Let $a \in \N_0$ such that $n(q-1) = am+1$, and let $k \in \{1,\ldots,m-1\}$. 
 Then $\lfloor n \frac{q-1}{m} \rfloor = \lfloor \frac{am+1}{m} \rfloor = a$ and $\lfloor kn \frac{q-1}{m} \rfloor = ka$. 
 Since $\LL(A(q,n,e)) = a+1$, the monomial $(x_1 \ldots x_n)^{q-1}$ can be written as a product of $a$ monomials in $\J(A(q,n,e))$.
 Thus the monomial $(x_1 \ldots x_{kn})^{q-1}$ can be written as a product of $ka$ monomials in $\J(A(q,kn,e))$. The result follows.
\end{proof}


\section{The case that $e$ is a prime power}\label{section_prime_power}

As before, let $q,n,e \in \N$ such that $q>1$ and $e \mid q^n-1$. We set $A := A(q,n,e)$, $J := \J (A)$ and $m := m(q,e)$.
In this section, we will give several conditions which imply that $\LL (A) = \lfloor n \frac{q-1}{m} \rfloor + 1$. 

\begin{lemma}\label{power_sum}
    If $m = m(q,e)$ divides $n / r$, for some divisor $r$ of $n$,
    and $e$ divides $\sum_{i=0}^{m-1} q^{n i/(m r)}$ then
    $\LL(A) = \lfloor n \frac{q-1}{m} \rfloor + 1$.
\end{lemma}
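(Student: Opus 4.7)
The plan is to match the upper bound $\lfloor n(q-1)/m \rfloor + 1$ from \cite[Theorem~\refPartI{thm_upper_and_lower_bounds_I}]{BHHK1} by exhibiting $(x_1 x_2 \cdots x_n)^{q-1}$ as a product of $n(q-1)/m$ monomials in $J$; note that $m$ divides $n$, so $n(q-1)/m$ is an integer and equals $\lfloor n(q-1)/m \rfloor$.

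Set $s := n/(mr)$, so $n = msr$. The hypothesis $e \mid \sum_{i=0}^{m-1} q^{is}$ is precisely the statement that the monomial
\[
   u := x_1 \, x_{s+1} \, x_{2s+1} \cdots x_{(m-1)s+1}
\]
of degree $m$ lies in $A$, since its weighted exponent sum is $\sum_{i=0}^{m-1} q^{is}$; having positive degree, $u \in J$. For each $k \in \Z$, the cyclic shift
\[
   v_k := x_{k+1} \, x_{s+k+1} \cdots x_{(m-1)s+k+1}
\]
(indices read modulo $n$) also lies in $J$, because its weighted exponent sum is $q^k \sum_{i=0}^{m-1} q^{is}$, still divisible by $e$.

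Next I would select a family of $rs = n/m$ such shifts whose product is $x_1 x_2 \cdots x_n$. Take the shifts $v_{jms+l}$ for $j \in \{0,1,\ldots,r-1\}$ and $l \in \{0,1,\ldots,s-1\}$. The positions used by $v_{jms+l}$ are $\{ jms + is + l + 1 : 0 \leq i \leq m-1 \}$, all lying in $\{1,\ldots,n\}$ (no wrap-around), and the map $(j,i,l) \mapsto jms + is + l + 1$ is a bijection from $\{0,\ldots,r-1\} \times \{0,\ldots,m-1\} \times \{0,\ldots,s-1\}$ onto $\{1,2,\ldots,n\}$ by uniqueness of the mixed-radix expansion of $p-1$ in the bases $(r,m,s)$. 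Hence
\[
   \prod_{j=0}^{r-1} \prod_{l=0}^{s-1} v_{jms+l} = x_1 x_2 \cdots x_n.
\]

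Raising to the $(q-1)$-th power displays $(x_1 x_2 \cdots x_n)^{q-1}$ as a product of $(q-1) \cdot n/m$ factors in $J$, and this element is nonzero in $A$ since every $x_i$ appears with exponent $q-1 < q$. Therefore $\LL(A) \geq n(q-1)/m + 1$, which combined with the upper bound yields the asserted equality.

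The only step requiring care is the combinatorial verification that the chosen $n/m$ shifts tile $\{1,\ldots,n\}$ exactly once; this is the mixed-radix bijection above and is routine, so I expect no serious obstacle.
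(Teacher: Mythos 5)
Your proof is correct and follows essentially the same route as the paper: both exhibit the degree-$m$ monomial $x_1x_{s+1}\cdots x_{(m-1)s+1}$ and cover $\{1,\ldots,n\}$ by exactly the shifts $v_{jms+l}$ (the paper packages this as $s$ consecutive shifts followed by $r$ block-shifts of length $n/r$, while you verify the same tiling via a mixed-radix bijection). The only blemish is the typo ``expansion of $p-1$'' where you mean the index $t \in \{0,\ldots,n-1\}$.
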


\begin{proof}
    Let $a = n/(m r)$.
    We have $x_1 x_{a+1} x_{2a+1} \cdots x_{(m-1)a+1} \in \J(A)$ of degree $m$.
    The product of $a$ cyclic shifts of this monomial yields
    $x_1 x_2 \cdots x_{ma} = x_1 x_2 \cdots x_{n/r}$,
    and $r$ cyclic shifts by $n/r$ positions of this product yield
    $x_1 x_2 \cdots x_n$.
    Thus we have found a decomposition of $(x_1 x_2 \cdots x_n)^{q-1}$
    into $a r (q-1) = n (q-1) / m$ factors in $\J(A)$.
    Hence
    \[
       \LL(A) \geq \frac{n (q-1)}{m} + 1
              = \left\lfloor \frac{n (q-1)}{m} \right\rfloor + 1
              \geq  \LL(A).
    \]
\end{proof}

We apply this in the proof of our next result.

\begin{prop}\label{ll_for_special_small_m}
If $e$ divides $q^k+1$ or $q^{2k}+q^k+1$ for some $k \in \N$ then 
$$\LL(A(q,n,e)) = \left\lfloor n \frac{q-1}{m} \right\rfloor + 1.$$
\end{prop}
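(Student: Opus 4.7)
The plan is to reduce to a single substantive case and close it via Lemma~\ref{power_sum}. The hypothesis $e \mid q^k+1$ gives $m := m(q,e) \leq 2$, so either $e = 1$ (trivial with $m = 1$) or $m = 2$, in which case the result is \cite[Lemma~\refPartI{lem_m_eq_2}]{BHHK1}. The hypothesis $e \mid q^{2k}+q^k+1$ gives $m \leq 3$; the cases $m \leq 2$ are handled as above, and the case $m = 3$ with $e = 3$ forces $q \equiv 1 \pmod 3$, so $m = 3$ divides $q-1$ and \cite[Theorem~\refPartI{thm_upper_and_lower_bounds_I}]{BHHK1} applies. This leaves the single substantive case: $e \mid q^{2k}+q^k+1$ with $m = 3$ and $e \geq 4$.

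In this case I would apply Lemma~\ref{power_sum} with $r := n/t$, where $t := \ord_e(q)$. The first step is to establish $3 \mid t$: multiplying $1 + q^k + q^{2k} \equiv 0 \pmod e$ by $q^k-1$ gives $q^{3k} \equiv 1 \pmod e$, while $q^k \not\equiv 1 \pmod e$ (otherwise $e \mid 3$, contradicting $e \geq 4$), so $q^k$ has order exactly $3$ in $(\Z/e\Z)^\times$ and hence $3$ divides $t$. It follows that $r \mid n$, $mr = 3n/t$ divides $n$, and $m = 3$ divides $n/r = t$. The remaining condition $e \mid 1 + q^{t/3} + q^{2t/3}$ is the conceptual heart: both $q^{t/3}$ and $q^k$ are elements of order $3$ in the cyclic group $\langle q+e\Z\rangle$, which has a unique subgroup of that order, so the two unordered pairs $\{q^{t/3},q^{2t/3}\}$ and $\{q^k,q^{2k}\}$ coincide modulo $e$. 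Summing with $1$ then yields $1 + q^{t/3} + q^{2t/3} \equiv 1 + q^k + q^{2k} \equiv 0 \pmod e$, and Lemma~\ref{power_sum} gives $\LL(A(q,n,e)) = \lfloor n(q-1)/m \rfloor + 1$.

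The main obstacle is the choice of witness in Lemma~\ref{power_sum}. A naive attempt setting $n/(mr) = k$ would demand $mk \mid n$, which is not guaranteed (we only know $t \mid n$); replacing $k$ by the intrinsic exponent $t/m$ removes this difficulty, but then one must verify that $q^{t/m}$ satisfies the same polynomial relation modulo $e$ that the hypothesis provides for $q^k$. The uniqueness of the order-$m$ subgroup in a cyclic group supplies this, and is the only non-routine ingredient; everything else is bookkeeping of the boundary cases $e \in \{1,2,3\}$ and routine verification of the divisibility hypotheses of Lemma~\ref{power_sum}.
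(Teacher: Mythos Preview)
Your argument is correct and follows the paper's strategy: dispose of $m \leq 2$ via the elementary lemma from Part~I, then for $m = 3$ invoke Lemma~\ref{power_sum}. The difference lies in how that lemma is applied. The paper simply sets $n = 3k$ and takes $r = 1$, so that $n/(mr) = k$ and the required divisibility $e \mid \sum_{i=0}^{m-1} q^{ni/(mr)} = 1 + q^k + q^{2k}$ is literally the hypothesis; it does not explicitly address an arbitrary admissible $n$. You instead keep the ambient $n$, take $r = n/\ord_e(q)$, and add the step of showing $e \mid 1 + q^{t/3} + q^{2t/3}$ via uniqueness of the order-$3$ subgroup of the cyclic group $\langle q + e\Z\rangle$---together with an explicit treatment of the boundary case $e = 3$. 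Your version is therefore more complete for general $n$, while the paper's is shorter and leaves that passage to the reader.
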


\begin{proof}
Suppose first that $e \mid q^k+1$ for some $k \in \N$. Then also $e \mid q^{2k}-1$, and $2 \geq m(q,e)$.
Hence the result follows from \cite[Lemma~\refPartI{lem_LL_elementary}~(iv)]{BHHK1}.

Now suppose that $e \mid q^{2k}+q^k+1$ for some $k \in \N$,
and set $n = 3 k$.
Then $e \mid q^n-1$ and $m(q,e) \leq 3$.
By \cite[Lemma~\refPartI{lem_LL_elementary}~(iv)]{BHHK1},
we may assume that $m(q,e) = 3$.
Since $m$ divides $n$ and $e$ divides
$1 + q^k + q^{2k} = \sum_{i=0}^{m-1} q^{n i/m}$,
we can apply Lemma~\ref{power_sum}.
\end{proof}

Now we come to the proof of the main result of this section.

\begin{thm}\label{thm_ll_for_prime_powers}
We have
$\LL( A(q,n,e) ) = \left\lfloor \frac{n (q-1)}{m(q,e)} \right\rfloor + 1$
in each of the following cases.
\begin{itemize}
\item[(i)]
    $e = p^k$ for an odd prime $p$ and $k \in \N$,
    and $q \equiv 1 \pmod{p}$, or \\
    $e = 2 p^k$ for an odd prime $p$ and $k \in \N$,
    and $q \equiv 1 \pmod{2p}$;
\item[(ii)]
    $e$ is a power of an odd Pierpont prime
    (cf.~Remark~\ref{pierpont_prime_remark});
\item[(iii)]
    $e$ is a power of $2$.
\end{itemize}
\end{thm}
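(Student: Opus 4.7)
The plan is to separate into the three cases and in each one reduce to one of the established equality criteria: the condition ``$m \mid q-1$'' via \cite[Theorem~\refPartI{thm_upper_and_lower_bounds_I}]{BHHK1}, the condition ``$m = 2$'' via \cite[Lemma~\refPartI{lem_m_eq_2}]{BHHK1}, or the power-sum criterion of Proposition~\ref{ll_for_special_small_m}. The only situation requiring genuinely new work is the case $m = 4$ inside case~(iii), for which I will build an explicit decomposition of $(x_1 x_2 \cdots x_n)^{q-1}$ using cyclic shifts of a single degree-$4$ monomial.

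For case~(i), Propositions~\ref{prop_m_equals_e1} and~\ref{m_eq_e1_again} yield $m = \gcd(e, q-1)$, which divides $q-1$, so \cite[Theorem~\refPartI{thm_upper_and_lower_bounds_I}]{BHHK1} applies directly. For case~(ii), Corollary~\ref{pierpont_prime_cor} shows $m \in \{\gcd(e, q-1), 2, 3\}$, and the first two values reduce to the previous criteria. The value $m = 3$ can occur only when $p \neq 3$ is a non-Fermat Pierpont prime and $3 \mid \ord_e(q)$; writing $\ord_e(q) = 3 n_1$, I would factor $q^{3 n_1} - 1 = (q^{n_1} - 1)(q^{2 n_1} + q^{n_1} + 1)$ and use that the gcd of the two factors divides $3$, hence is coprime to $p$. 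Since $e \nmid q^{n_1} - 1$, comparing $p$-parts forces $e \mid q^{2 n_1} + q^{n_1} + 1$, and then Proposition~\ref{ll_for_special_small_m} finishes the job.

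For case~(iii), Proposition~\ref{m_for_two_power} gives $m \in \{\gcd(e, q-1), 2, 4\}$; only $m = 4$ is new, occurring when $q \equiv 3 \pmod 4$ and $q \not\equiv -1 \pmod e$. The proof of Proposition~\ref{m_for_two_power} produces $c, d \in \{0, \ldots, \ord_e(q)-1\}$ with $c$ odd, $d$ even, and $e \mid 2 q^c + q^d + 1$; correspondingly, $\mu := x_1 x_{c+1}^2 x_{d+1}$ (or $\mu := x_1^2 x_{c+1}^2$ in the degenerate case $d = 0$) is a degree-$4$ element of $\J(A(q,n,e))$ for $n := \ord_e(q)$. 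My plan is to choose a subset $S \subset \Z/n\Z$ of cardinality $n/2$ so that $\prod_{j \in S} \sigma^j(\mu) = (x_1 x_2 \cdots x_n)^2$ in $A(q,n,e)$, where $\sigma$ denotes the cyclic shift of variables, and to observe that the full product $\prod_{j=0}^{n-1} \sigma^j(\mu)$ equals $(x_1 x_2 \cdots x_n)^4$ whenever $q \geq 5$. Writing $q - 1 = 4 a + 2$, combining these two factorizations expresses $(x_1 x_2 \cdots x_n)^{q-1}$ as a product of $a n + n/2 = \lfloor n(q-1)/4 \rfloor$ monomials in $\J$, which matches the upper bound from \cite[Theorem~\refPartI{thm_upper_and_lower_bounds_I}]{BHHK1}; for $q = 3$ we have $a = 0$, so only the $(x_1 \cdots x_n)^2$-factorization is needed.

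The hard part will be constructing $S$: for every $k \in \Z/n\Z$ the weighted count of shifts of $\mu$ whose support contains $x_k$ must equal $2$. Translating this into a $\{0,1\}$-valued indicator $\chi$ on $\Z/n\Z$, I need $\chi$ to satisfy both the $d$-periodicity $\chi(k) = \chi(k-d)$ and the $c$-anti-periodicity $\chi(k-c) = 1 - \chi(k)$. Since $n$ is a power of $2$, $c$ odd gives $\gcd(c,n) = 1$ while $d$ even makes $d c^{-1}$ even modulo $n$, which are precisely the compatibility conditions needed for a valid $\chi$ with exactly $n/2$ ones to exist. Once equality has been established for $n = \ord_e(q)$, Corollary~\ref{cor_n1_plus_n2} and Remark~\ref{rem_ll_for_rn_from_ll_for_n} propagate it to every $n$ with $e \mid q^n-1$, since the required divisibility $m \mid n(q-1)$ reduces in each case to a trivial parity check.
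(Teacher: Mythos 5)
Your proposal is correct and takes essentially the same route as the paper's proof: case~(i) via $m=\gcd(e,q-1)$ and Part~I, case~(ii) via Corollary~\ref{pierpont_prime_cor} together with the factorization $q^{3t}-1=(q^t-1)(q^{2t}+q^t+1)$ feeding into Proposition~\ref{ll_for_special_small_m}, and case~(iii) via the relation $e \mid 2q^c+q^d+1$ with $c$ odd and $d$ even. Your indicator-function argument for choosing the subset $S$ simply re-derives the paper's explicit choice, namely the $n/2$ cyclic shifts of $x_1x_{d+1}x_{c+1}^2$ by even amounts.
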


\begin{proof}
\ 

\begin{itemize}
\item[(i)]
    Proposition~\ref{prop_m_equals_e1} yields that $m(q,e) = \gcd(e,q-1)$
    in this case.
    Now apply \cite[Theorem~\refPartI{thm_upper_and_lower_bounds_I}~(iii)]{BHHK1}.
\item[(ii)]
    Let $p$ be the prime that divides $e$.
    Apply part~(i) in the cases where $q \equiv 1 \pmod{p}$,
    and Corollary~\ref{pierpont_prime_cor} and
    \cite[Lemma~\refPartI{lem_LL_elementary}]{BHHK1} in the cases
    where $\ord_e(q)$ is even.

    In the remaining cases, $\ord_e(q) = 3^c p^l$ with $c > 0$.
    Set $t:= \ord_e(q)/3$ and note that $e$ divides
    $q^{3t}-1 = (q^t-1) (1 + q^t + q^{2t})$.
    Since $\ord_p(q) = 3^c$ holds, $p$ does not divide $q^t-1$,
    thus $e$ divides $1 + q^t + q^{2t}$,
    and Proposition~\ref{ll_for_special_small_m} can be applied.
\item[(iii)]
    We follow the proof of Proposition~\ref{m_for_two_power}.
    Apply \cite[Theorem~\refPartI{thm_upper_and_lower_bounds_I}~(iii)]{BHHK1}
    if $q \equiv 1 \pmod{4}$,
    and \cite[Lemma~\refPartI{lem_LL_elementary}]{BHHK1}
    if $q \equiv -1 \pmod{e}$.

    In all other cases, we have $m(q,e) = 4$, $e = 2^k$ for some $k \in \N$,
    and $q \equiv -5^{2^r a} \pmod{e}$ for some $r \in \{ 0, 1, \ldots, k-2 \}$
    and some odd $a \in \N$.
    Let $Q \equiv -5^{2^r} \pmod{e}$.
    The proof of Proposition~\ref{m_for_two_power}~(iii) shows that
    $e$ divides $2Q^c + Q^d + 1$,
    for suitable $c, d \in \{ 0, 1, \ldots, \ord_e(q)-1 \}$,
    where $c$ is odd and $d$ is even.
    Since $\langle q + e \Z \rangle = \langle Q + e \Z \rangle$,
    there is an odd $t \in \Z$ with $q^t \equiv Q \pmod{e}$,
    which implies that $e$ divides $2q^{ct} + q^{dt} + 1$.
    Note that $ct$ is odd and $dt$ is even.
    Thus the $n/2$ cyclic shifts of $x_1 x_{dt+1} x_{ct+1}^2$
    by an even number of positions yield a decomposition of
    $(x_1 x_2 \cdots x_n)^2$ into monomials of degree $m(q,e)$
    in $\J( A(q,n,e) )$.
\end{itemize}
\end{proof}


\section{The case that $e$ divides $(q^n-1)/(q-1)$}\label{section_polynomials}

We keep the notation of the preceding sections.

\begin{prop}\label{prop_m_geq_n_minus_1}
Let $e$ be a divisor of $\frac{q^n-1}{q-1}$ (so that $m = m(q,e) \leq n$
by \cite[Lemma~\refPartI{prop_m_and_e1}]{BHHK1}).
Then $\LL(A) = \lfloor n \frac{q-1}{m} \rfloor + 1$ holds
in the following cases.

(i) $n-m \mid \lfloor (n-m) \frac{q-1}{m} \rfloor$,

(ii) $m \geq n-1 $,

(iii) $m = n-2$ and $q \equiv a \pmod{m}$ for some $a \in \{1,\ldots, \lfloor \frac{m+1}{2} \rfloor \}$.
\end{prop}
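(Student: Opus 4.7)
\textit{Proof plan.} In every case, \cite[Theorem~\refPartI{thm_upper_and_lower_bounds_I}]{BHHK1} supplies the upper bound $\LL(A) \leq N + 1$, where $N := \lfloor n(q-1)/m \rfloor$, so it suffices to exhibit $(x_1 x_2 \cdots x_n)^{q-1}$ as a product of exactly $N$ monomials in $\J(A)$; this forces $\J(A)^N \neq 0$, and hence $\LL(A) \geq N + 1$. Two building blocks are immediately available under the hypothesis $e \mid (q^n-1)/(q-1)$. First, the congruence $1 + q + \cdots + q^{n-1} \equiv 0 \pmod e$ places the degree-$n$ monomial $x_1 x_2 \cdots x_n$ into $\J(A)$. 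Second, by the definition of $m$, there is a monomial $y = x_1^{i_1} \cdots x_n^{i_n} \in \J(A)$ of total degree $m$; since $\ord_e(q) \mid n$, the cyclic shift $\sigma\colon x_j \mapsto x_{j+1 \bmod n}$ preserves $A$, and a short computation gives
\[
   \prod_{k=0}^{n-1} \sigma^k(y) \;=\; (x_1 x_2 \cdots x_n)^m.
\]

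Case (i) is the main construction. Set $s := n - m \geq 0$. When $s \geq 1$, the divisibility hypothesis allows us to write $\lfloor s(q-1)/m \rfloor = s c$ for some $c \in \N_0$, and rearranging $s c \leq s(q-1)/m < s c + 1$ yields $q - 1 = m c + r$ with $0 \leq r$ and $s r < m$. A brief calculation then gives $N = n c + r$. The factorization is now direct: take $c$ disjoint ``cycles'' (each being the product of the $n$ cyclic shifts of $y$), together with $r$ copies of $x_1 x_2 \cdots x_n$. This supplies $c n + r = N$ factors in $\J(A)$, and their product equals $(x_1 \cdots x_n)^{m c + r} = (x_1 \cdots x_n)^{q-1}$, which is nonzero because each variable receives total exponent $q - 1 < q$.

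Cases (ii) and (iii) reduce to case (i). If $m = n$ (so $s = 0$), one simply writes $(x_1 \cdots x_n)^{q-1}$ as $q - 1$ copies of $x_1 \cdots x_n \in \J(A)$ (here $N = q - 1$). If $m = n - 1$, the hypothesis of case (i) reads ``$1 \mid \lfloor (q-1)/m \rfloor$'' and is vacuous. For (iii), writing $q - 1 = m c + b$ with $b = a - 1$ and $a \in \{ 1, \ldots, \lfloor (m+1)/2 \rfloor \}$ gives $b \leq \lfloor (m-1)/2 \rfloor$, so $2 b < m$ and $\lfloor 2(q-1)/m \rfloor = 2 c$ is even, which is precisely the divisibility demanded by case (i) with $s = 2$.

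I do not anticipate any serious obstacle; the only items requiring care are verifying the arithmetic identity $N = n c + r$ in case (i) (which is where the divisibility hypothesis is genuinely used) and confirming that cyclic shifts act on $A$, which follows from $q^n \equiv 1 \pmod e$.
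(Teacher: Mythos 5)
Your proposal is correct and follows essentially the same route as the paper: the same decomposition of $(x_1\cdots x_n)^{q-1}$ into $c$ (the paper's $k$) full cycles of the $n$ cyclic shifts of a degree-$m$ monomial together with $r = q-1-mc$ copies of $x_1\cdots x_n$, and the same arithmetic reductions of (ii) and (iii) to (i). The only cosmetic difference is that you make explicit the facts (cyclic shifts preserve $A$, the count $N = nc+r$) that the paper leaves implicit.
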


\begin{proof}
(i) We set $k := \lfloor (n-m) \frac{q-1}{m} \rfloor / (n-m)$ if $n > m$, and $k := 0$ if $n=m$. Then $0 \leq k \leq \frac{q-1}{m}$. Let $x_1^{i_1} \ldots x_n^{i_n}$
 be a monomial of degree $m$ in $A$. The product of the $n$ cyclic shifts of $x_1^{i_1} \ldots x_n^{i_n}$ is $x_1^m \ldots x_n^m$, and the $k$-th power of
 this monomial is $x_1^{km} \ldots x_n^{km}$. We multiply this with the $(q-1-km)$-th power of the monomial $x_1\ldots x_n \in A$ to obtain the nonzero monomial
 $x_1^{q-1} \ldots x_n^{q-1} \in A$, a product of
 $$kn + q - 1 - km = q-1+k(n-m) = \left\lfloor n \frac{q-1}{m} \right\rfloor $$
 monomials in $\J(A)$. Thus $\LL(A) \geq \lfloor n \frac{q-1}{m} \rfloor + 1$,
and the result follows from
\cite[Theorem~\refPartI{thm_upper_and_lower_bounds_I}]{BHHK1}.

(ii) This is an immediate consequence of (i).

(iii) We write $q = a+cm$ for some $c \in \N_0$. Then
$$\left\lfloor (n-m) \frac{q-1}{m} \right\rfloor = \left\lfloor 2 \frac{a-1+cm}{m} \right\rfloor = 2c + \left\lfloor 2 \frac{a-1}{m} \right\rfloor = 2c$$
is divisible by $2 = n-m$.
Thus the result follows again from (i).
\end{proof}

\begin{lemma}\label{lem_ll_for_large_N}
(i)
  Let $q, e, k \in\N$ such that $\gcd(q,e) = 1$ and $k<q$.
Moreover, let $n = \ord_e(q)$, and let $N$ be a multiple of $n$.
Then the monomial $(x_1 x_2 \cdots x_N)^k \in J = \J(A(q,N,e))$ is a product of
$M$ monomials $m_1, m_2, \ldots, m_M \in J$ if and only if
the monomial $(Y_1 Y_2 \cdots Y_n)^{kN/n}$ in the polynomial ring $F[Y_1,\ldots,Y_n]$ is a product of
$M$ monomials $p_1, p_2, \ldots, p_M$ with the property that
$p_i = Y_1^{c_{i,1}} \cdots Y_n^{c_{i,n}}$ such that
$\sum_{j=1}^n c_{i,j} q^{j-1} \equiv 0 \pmod{e}$ holds for $1 \leq i \leq M$.
Furthermore, the factors may be chosen such that the total degrees of $m_i$ and
$p_i$ are equal, for $1 \leq i \leq M$.

(ii)
 Let $q,e \in \N$ such that $q > 1$ and $\gcd(q,e) = 1$. Moreover, let $n := \ord_e(q)$, $m := m(q,e)$ and $m_1 := \gcd(m,q-1)$. 
 If $N$ is a multiple of $\frac{m}{m_1}n$ then $\LL(A(q,N,e)) = N \frac{q-1}{m} + 1$.
\end{lemma}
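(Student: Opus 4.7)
The plan is to reduce factoring $(x_1 \cdots x_N)^k \in A(q,N,e)$ into monomials in $J$ to the corresponding question for $(Y_1 \cdots Y_n)^{kN/n}$ in the polynomial ring $F[Y_1,\ldots,Y_n]$, where the $q$-th power relations are dropped but the weighted-exponent congruence modulo $e$ is kept. For this I introduce the combinatorial \emph{folding} operation on monomials: for $m = x_1^{a_1} \cdots x_N^{a_N}$ set $\phi(m) := Y_1^{b_1} \cdots Y_n^{b_n}$ with $b_\ell := \sum_{t=0}^{s-1} a_{\ell+tn}$, where $s := N/n$. Folding is multiplicative on monomials, preserves total degree, and sends $(x_1 \cdots x_N)^k$ to $(Y_1 \cdots Y_n)^{ks} = (Y_1 \cdots Y_n)^{kN/n}$. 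Since $n = \ord_e(q)$, we have $q^{j-1} \equiv q^{((j-1) \bmod n)} \pmod{e}$, so $\sum_j a_j q^{j-1} \equiv \sum_\ell b_\ell q^{\ell-1} \pmod{e}$ and the $J$-membership condition is preserved in both directions. The $(\Rightarrow)$ half of (i) is then just $p_i := \phi(m_i)$.

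For the $(\Leftarrow)$ direction, given a factorization $p_1 \cdots p_M$ with $p_i = Y_1^{c_{i,1}} \cdots Y_n^{c_{i,n}}$, I would distribute exponents back across the $s$ positions of each color $\ell \in \{1,\ldots,n\}$ independently. Namely, for each $\ell$ choose a nonnegative integer $s \times M$ matrix $D^{(\ell)}$ with column sums $c_{1,\ell}, \ldots, c_{M,\ell}$ and all row sums equal to $k$; such a matrix exists because both marginals have total $ks = kN/n = \sum_i c_{i,\ell}$, and the greedy construction always succeeds for nonnegative integer marginals with equal grand totals. Setting $m_i := \prod_{\ell,t} x_{\ell+tn}^{D^{(\ell)}_{t+1,i}}$ gives $\prod_i m_i = (x_1 \cdots x_N)^k$ from the row-sum condition. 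Each entry is at most $k < q$, so each $m_i$ is a nonzero monomial in $A(q,N,e)$. By construction $\phi(m_i) = p_i$, so $m_i \in J$ by the preservation above, and the total degree of $m_i$ equals $\sum_\ell c_{i,\ell} = \deg p_i$.

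For (ii), I would apply (i) with $k = q - 1$. The hypothesis $(m/m_1) n \mid N$ together with $m_1 \mid q - 1$ gives $m \mid N(q-1)$: writing $N = (m/m_1) n t$ yields $N(q-1)/m = (q-1) n t / m_1 \in \N$. Thus the upper bound from \cite[Theorem~\refPartI{thm_upper_and_lower_bounds_I}]{BHHK1} equals $N(q-1)/m + 1$, and it suffices to factor $(Y_1 \cdots Y_n)^{(q-1)N/n}$ in $F[Y_1,\ldots,Y_n]$ into $N(q-1)/m$ monomials of degree $m$ each satisfying the congruence. Take a monomial $Y_1^{i_1} \cdots Y_n^{i_n}$ of degree $m$ with $\sum_\ell i_\ell q^{\ell-1} \equiv 0 \pmod{e}$ (which exists by the definition of $m$); its $n$ cyclic shifts also satisfy the congruence (a shift multiplies the weighted sum by a unit mod $e$) and have product $(Y_1 \cdots Y_n)^m$. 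Taking $r := (q-1)N/(mn) = (q-1)t/m_1 \in \N$ copies of this $n$-shift construction produces $rn = N(q-1)/m$ factors whose product is $(Y_1 \cdots Y_n)^{rm} = (Y_1 \cdots Y_n)^{(q-1)N/n}$, as required.

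The main obstacle is the $(\Leftarrow)$ direction of part (i) --- the distribution argument: producing the matrices $D^{(\ell)}$ and verifying that the resulting monomials $m_i$ respect both the target product $(x_1 \cdots x_N)^k$ and the $J$-membership congruence. Once (i) is in hand, (ii) follows easily from cyclic shifts of a single minimum-degree congruent monomial.
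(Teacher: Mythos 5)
Your proposal is correct and follows essentially the same route as the paper: the forward direction is the same folding map, and part (ii) is the same cyclic-shift construction of a degree-$m$ factorization of $(Y_1\cdots Y_n)^{N(q-1)/n}$. For the backward direction of (i) the paper distributes the exponents $c_{i,j}$ greedily over the positions $j, j+n,\ldots$ so that running totals stay below $q$; your transportation-matrix formulation (nonnegative integer matrices with row sums $k$ and column sums $c_{i,\ell}$) is just a cleaner justification of why that greedy distribution always succeeds.
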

 
\begin{proof}
(i)
  If the factors $m_i$ are given by $m_i = x_1^{a_{i,1}} \cdots x_N^{a_{i,N}}$
  then define $p_i = Y_1^{c_{i,1}} \cdots Y_n^{c_{i,n}}$ by setting
  $c_{i,j} = \sum_{l=0}^{N/n-1} a_{i, j+ln}$.

  If the factors $p_i$ are given by $p_i = Y_1^{c_{i,1}} \cdots Y_n^{c_{i,n}}$
  then define $m_1, m_2, \ldots, m_M$ inductively:
  For $m_1$, distribute the exponents
  $c_{1,1}$ to $x_1, x_{n+1}, \ldots, x_{N-n+1}$,
  $c_{1,2}$ to $x_2, x_{n+2}, \ldots, x_{N-n+2}$,
  etc., such that all values are less than $q$.
  Then construct the exponent vector of $m_2$
  by distributing $c_{2,1}, \ldots, c_{2,n}$ such that the sum
  of the exponent vectors of $m_1$ and $m_2$ does not exceed $q-1$,
  and continue in this way.

(ii)
  It suffices to show that the monomial
  $(x_1 x_2 \cdots x_N)^{q-1} \in A(q,N,e)$ decomposes into a product of
  $\frac{N (q-1)}{m}$ factors in $\J(A(q,N,e))$.
  By part~(i), it suffices to show that $(Y_1 Y_2 \cdots Y_n)^{N(q-1)/n}$
  decomposes into a product of $\frac{N (q-1)}{m}$ admissible factors.
  Since $m$ divides $N(q-1)/n$,
  such a factorization is given by $N(q-1)/(mn)$ times the $n$ cyclic shifts
  of a monomial of degree $m$ in $A(q,n,e)$.
\end{proof}

Note that Lemma~\ref{lem_ll_for_large_N}~(ii) generalizes part
of \cite[Theorem~\refPartI{thm_upper_and_lower_bounds_I}]{BHHK1}.

\begin{prop}\label{prop_e_divides_Phi_d}
If $e \mid \frac{q^d-1}{q-1}$ for some $d \in \{2,3,4\}$ then 
$\LL(A(q,n,e)) = \left\lfloor n \frac{q-1}{m} \right\rfloor +1$.
\end{prop}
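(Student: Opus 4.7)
The cases $d = 2$ and $d = 3$ follow immediately from Proposition~\ref{ll_for_special_small_m}: taking $k = 1$, the hypotheses give $e \mid q+1$ and $e \mid 1+q+q^2$ respectively. For $d = 4$ we have $e \mid (1+q)(1+q^2) = 1+q+q^2+q^3$, so $m := m(q,e) \leq 4$; set $\nu := \ord_e(q) \in \{1,2,4\}$.

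If $\nu = 1$, then $q \equiv 1 \pmod{e}$ and $e$ divides $\gcd(q-1, 1+q+q^2+q^3) = \gcd(q-1, 4)$, so $m = e$ divides $q-1$ and \cite[Theorem~\refPartI{thm_upper_and_lower_bounds_I}~(iii)]{BHHK1} applies. If $m = 2$, there exists $c$ with $q^c \equiv -1 \pmod{e}$, so $e \mid q^c + 1$ and Proposition~\ref{ll_for_special_small_m} applies. A brief case analysis using $(1+q)(1+q^2) \equiv 0 \pmod{e}$ together with the possible orders rules out $m = 3$ whenever $\nu \geq 2$: the only way to produce a 3-sum divisible by $e$ under these constraints forces $e = 3$, and then $\nu = 1$, a case already handled.

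The main case is $m = 4$ with $\nu \in \{2, 4\}$. Since $\nu \mid 4$ and $m = 4 \mid 4(q-1)$, Proposition~\ref{prop_reduction_ll_for_n} with $N = 4$ reduces the verification to $n \in \{\nu, 2\nu, \ldots, 4\}$. For $n = 4$, Lemma~\ref{power_sum} with $r = 1$ applies: $m = 4$ divides $n/r = 4$, and $e$ divides $\sum_{i=0}^{3} q^{i} = 1+q+q^2+q^3$ by hypothesis. The only remaining case is $n = 2$, which arises when $\nu = 2$; here $q^2 \equiv 1 \pmod{e}$ yields $1+q^2 \equiv 2 \pmod{e}$ and hence $e \mid 2(1+q)$, so $y := x_1^2 x_2^2 \in \J(A(q,2,e))$, and $y^k = x_1^{2k} x_2^{2k}$ is nonzero whenever $2k \leq q-1$, giving $\LL(A(q,2,e)) \geq \lfloor (q-1)/2 \rfloor + 1$, matching the upper bound $\lfloor n(q-1)/m \rfloor + 1$ provided by \cite[Theorem~\refPartI{thm_upper_and_lower_bounds_I}]{BHHK1}. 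The main obstacle is precisely this residual $n = 2$ case, since it is not directly covered by Lemma~\ref{power_sum} and must be settled by the explicit monomial construction above rather than by a general structural result.
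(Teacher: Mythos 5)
Your treatment of $d\in\{2,3\}$ and of the main subcases of $d=4$ (namely $m\le 2$; $m=4$ with $n=4$ via Lemma~\ref{power_sum}; and the residual $n=2$ case via the monomial $x_1^2x_2^2$ of weight $2(1+q)\equiv 0\pmod e$) is correct and uses essentially the same ingredients as the paper. The only organizational difference is that you reduce to $n\le 4$ by Proposition~\ref{prop_reduction_ll_for_n}, whereas the paper splits on $g=\gcd(4,n)$ and treats all admissible $n$ at once in each branch, writing $(x_1\cdots x_n)^{q-1}$ as a product of cyclic shifts of $x_1x_2x_3x_4$ resp.\ $x_1^2x_2^2$. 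Both routes work.

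The one genuine gap is your elimination of $m=3$. You assert that ``the only way to produce a 3-sum divisible by $e$ under these constraints forces $e=3$'', but that is not what the case analysis actually yields. For $\nu=2$ the correct argument is a parity argument, not an identification of $e$: one has $e\mid\gcd(q^2-1,\,q^3+q^2+q+1)\mid 2(q+1)$, and if $e\nmid q+1$ (which is needed for $m\ge 3$) then $e$ is even, hence $q$ is odd and $2\mid\gcd(e,q-1)\mid m$, so $m$ is even and cannot be $3$. For $\nu=4$ one really must enumerate the possible relations $q^{i_1}+q^{i_2}+q^{i_3}\equiv 0\pmod e$ with exponents in $\{0,1,2,3\}$ (up to multiplication by powers of $q$ there are seven of them, corresponding to the monomials $x_1^3$, $x_1^2x_2$, $x_1^2x_3$, $x_1^2x_4$, $x_1x_2x_3$, $x_1x_2x_4$, $x_1x_3x_4$); combining each with $e\mid q^3+q^2+q+1$ gives $e\mid 3$, $e\mid 5$, or $e=1$ --- not only $e=3$ --- and the surviving candidates must then be discarded by observing that $m(q,3)=3$ forces $q\equiv 1\pmod 3$ (hence $\nu=1$) and that $m(q,5)\in\{2,5\}$ is never $3$. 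This enumeration is exactly what the paper carries out; without it, your claim that $m=3$ cannot occur for $\nu\ge 2$ is unsupported, even though it is true.
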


\begin{proof}
If $e \mid q+1$ then
the assertion follows from \cite[Lemma~\refPartI{lem_LL_elementary}]{BHHK1}. 

If $e \mid q^2+q+1$ then the assertion is a special case of Proposition~\ref{ll_for_special_small_m}.

Finally, suppose that $e \mid q^3 + q^2 + q + 1$.
Since $e \mid q^4-1$ and $e \mid q^n-1$,
we also have $e \mid q^g-1$ where $g := \gcd(4,n)$. 

If $g=1$ then $e \mid q-1$, and there is nothing to prove. 

If $g = 2$ then $e \mid \gcd(q^2-1,q^3+q^2+q+1) \mid 2q+2$.
Suppose first that $e$ is odd.
Then $e \mid q+1$, so that $m \leq 2$, and the result follows.
Suppose therefore that $e$ is even. Then $q$ is odd.
Moreover, $e_1 = \gcd(e,q-1)$ and $m$ are even.
Since $m \leq 4$ we may assume that $m=4$.
Then $x_1^2x_2^2 \in J := \J(A(q,n,e))$.
Since $n$ is also even we can write $x_1^{q-1} \ldots x_n^{q-1}$
as a product of $\frac{n}{2} \frac{q-1}{2}$ shifts of the
monomial $x_1^2x_2^2$.
The result follows in this case. 

This leaves the case $g=4$.
If $m=4$ then $x_1x_2x_3x_4 \in J$,
and we can write $x_1^{q-1} \ldots x_n^{q-1}$ as a product of
$x_1x_2x_3x_4$ and its cyclic shifts since $4 \mid n$.
The result follows in this case.

Thus we may assume that $m=3$.
In this case, $J$ contains one of the following monomials:
$$
x_1^3, x_1^2x_2,x_1^2x_3,x_1^2x_4, x_1x_2x_3, x_1x_2x_4, x_1x_3x_4.$$
In the first case, we have $e=3$,
and the result follows from
Table~\ref{table_meq_small} and \cite[Lemma~\refPartI{lem_LL_elementary}~(iv)]{BHHK1}.
In the next three cases, we have
$$
\begin{array}{l}
e \mid \gcd(q^3+q^2+q+1,q+2) \mid 5, \\
e \mid \gcd(q^3+q^2+q+1,q^2+2) \mid 3, \\
e \mid \gcd(q^3+q^2+q+1,q^3+2) \mid 5,
\end{array} $$
and we know that the claimed result holds.
In the last three cases, $e$ divides $1+q+q^2$, $1+q+q^3$ or $1+q^2+q^3$.
Since also $e \mid 1 + q + q^2 + q^3$ we obtain $e=1$,
and the result holds trivially.
\end{proof}

As before, for $d \in \N$,
we denote by $\Phi_d \in \Q[X]$ the $d$-th cyclotomic polynomial.

\begin{rem}\label{rem_e_divides_Phi_d}
Proposition~\ref{ll_for_special_small_m} implies:

If $e \mid \Phi_d(q)$ for some $d \in \{6,9,10\}$
or for a $2$-power $d$
then $\LL(A(q,n,e)) = \lfloor n \frac{q-1}{m} \rfloor +1$. 
\end{rem}

Our next result extends Proposition~\ref{prop_e_divides_Phi_d}
to the case $d = 5$.
First we deal with a special case.

\begin{example}\label{example_11}
Let $e = 11$.
We show the equality
$\LL(A(q,n,e)) = \lfloor n \frac{q-1}{m(q,e)} \rfloor + 1 \quad (\ast)$.

If $q \equiv k \pmod{e}$ with $k \notin \{ 3, 4, 5, 9 \}$
then we are done by \cite[Remark~\refPartI{rem_bound_overview}]{BHHK1} and
\cite[Lemma~\refPartI{lem_LL_elementary}]{BHHK1},
so we can assume $k \in \{ 3, 4, 5, 9 \}$.
Then $m = 3$ and $\ord_e(q) = 5$.
We apply Proposition~\ref{prop_reduction_ll_for_n} with $N = 15$.
Thus it suffices to prove $(\ast)$ for $n \in \{5,10,15\}$.
By Lemma~\ref{lem_ll_for_large_N}~(ii),
$(\ast)$ holds for $n = 15$. Thus we may assume $n \in \{5,10\}$.

By Remark~\ref{rem_reduction_for_fixed_e_to_finitely_many_q},
it suffices to prove $(\ast)$ for $q \leq \lcm(e,m) = 33$.
For two such values of $q$ that differ by a multiple of $m$,
only the smaller one has to be verified,
by Remark~\ref{rem_Q_congruent_q_mod_m}.
Thus we may assume that $q \in \{3,4,5\}$.
If $q = 4$ then $m \mid q-1$, so that $(\ast)$ holds
by \cite[Theorem~\refPartI{thm_upper_and_lower_bounds_I}~(iii)]{BHHK1}.

Suppose that $n=5$. If $q=5$ then $(\ast)$ holds by Proposition~\ref{decomp_into_1}.
If $q=3$ then $x_1^2\ldots x_5^2$ is the product of the 3 monomials $x_1^2x_3$, $x_2^2x_4$ and $x_3x_4x_5^2$ in $\J(A(3,5,11))$. 
Thus $(\ast)$ holds in this case.
Hence we may now assume that $n = 10$ and $q \in \{3,5\}$.

If $q = 3$ then $x_1^2 \ldots x_{10}^2$ is the product of the 6 monomials
$x_1^2x_3$, $x_2^2x_4$, $x_3x_4x_5^2$, $x_6^2x_8$,
$x_7^2x_9$, $x_8x_9x_{10}^2$ in $\J(A(3,10,11))$.
Thus $\LL(A(3,10,11)) > 6 = \lfloor 10 \frac{2}{3} \rfloor$.

If $q = 5$ then $x_1^3 \ldots x_{10}^3$ is the product of the 10 cyclic shifts
of a monomial of degree $m = 3$ in $J := \J(A(5,10,11))$,
and $x_1 \ldots x_{10}$ is the product of the 3 monomials
$x_1x_5x_6$, $x_3x_7x_8$ and $x_2x_4x_9x_{10}$ in $J$.
Thus $x_1^4 \ldots x_{10}^4$ is a product of 13 monomials in $J$.
Hence $\LL(A(5,10,11)) > 13 = \lfloor 10 \frac{4}{3} \rfloor$.
\end{example}

\begin{prop}\label{prop_e_divides_Phi_5}
If $e$ divides $\Phi_d(q)$ for some $d \in \{1,2,3,4,5\}$ then 
$$\LL(A(q,n,e)) = \left\lfloor n \frac{q-1}{m} \right\rfloor + 1.$$
\end{prop}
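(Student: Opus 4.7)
The plan is to reduce to the new case $d = 5$, since $d \in \{2,3,4\}$ is settled by Proposition~\ref{prop_e_divides_Phi_d} and $d = 1$ forces $e \mid q - 1$, which is covered by \cite[Theorem~\refPartI{thm_upper_and_lower_bounds_I}~(iii)]{BHHK1}. So I would assume $e > 1$ and $e \mid \Phi_5(q) = 1 + q + q^2 + q^3 + q^4$.

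A first reduction: $\ord_e(q)$ divides the prime $5$, so either $\ord_e(q) = 1$ (giving $e \mid q-1$, handled as above) or $\ord_e(q) = 5$. In the latter case, since the order is odd, $m(q,e) \neq 2$ by \cite[Lemma~\refPartI{lem_m_eq_2}]{BHHK1}, and Example~\ref{example_Phi5}(i) then forces $m(q,e) \in \{3, 4, 5\}$, with the subcases $m = 3$ and $m = 4$ occurring only when $e = 11$ and $e = 61$, respectively.

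The subcase $m = 5$ is direct: since $1 + q + q^2 + q^3 + q^4 \equiv 0 \pmod{e}$, the monomial $x_1 x_2 x_3 x_4 x_5$ lies in $\J(A(q,5,e))$, and $(x_1 x_2 x_3 x_4 x_5)^{q-1}$ is therefore a nonzero product of $q-1$ elements of $\J$, realizing the upper bound $\lfloor 5(q-1)/5\rfloor + 1 = q$ for $n = 5$. Proposition~\ref{prop_reduction_ll_for_n}, applied with $N = 5$ (so that $m \mid N(q-1)$ is automatic), then lifts the equality to all admissible $n$. The subcase $(e, m) = (11, 3)$ has already been verified in Example~\ref{example_11}.

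The main obstacle is the remaining subcase $(e, m) = (61, 4)$. Here I would apply Proposition~\ref{prop_reduction_ll_for_n} with $N = 20$, so that $5 \mid N$ and $4 \mid N(q-1)$ hold unconditionally, reducing the problem to verification for $n \in \{5, 10, 15, 20\}$ and a finite list of values of $q$ cut out by Remarks~\ref{rem_reduction_for_fixed_e_to_finitely_many_q} and~\ref{rem_Q_congruent_q_mod_m}. For each such pair $(q, n)$ I would exhibit an explicit factorization of $(x_1 \cdots x_n)^{q-1}$ into $\lfloor n(q-1)/4 \rfloor$ degree-four monomials of $\J(A(q,n,61))$. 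As a concrete building block: when $q \equiv 9 \pmod{61}$, the five cyclic shifts of $x_1^3 x_4$ all lie in $\J(A(q,5,61))$ and multiply to $(x_1 x_2 x_3 x_4 x_5)^4$, yielding a block of five degree-four factors; an analogous degree-four monomial exists in each of the other three nontrivial cosets of $\langle q+61\Z\rangle$, and concatenating such blocks with a leftover factor of $x_1 x_2 x_3 x_4 x_5$ (of degree $5$, which is within the Loewy length budget when $4 \nmid 5(q-1)$) handles all remaining $(q,n)$.
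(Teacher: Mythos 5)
Your reduction is sound and matches the paper's: $d\le 4$ via Proposition~\ref{prop_e_divides_Phi_d}, then $\ord_e(q)=5$, $m\in\{3,4,5\}$ with $m=3\Rightarrow e=11$ (Example~\ref{example_11}) and $m=4\Rightarrow e=61$ by Example~\ref{example_Phi5}(i). Your $m=5$ argument (the degree-$5$ monomial $x_1\cdots x_5\in\J(A(q,5,e))$ gives $\LL\ge q$, then Proposition~\ref{prop_reduction_ll_for_n} with $N=5$) is correct and even slightly more direct than the paper's appeal to Part~I. The reduction of the $e=61$ case to $n\in\{5,10,15,20\}$ and finitely many $q$ is also the paper's route.

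The gap is in what you do with those finitely many pairs for $e=61$. You promise ``an explicit factorization of $(x_1\cdots x_n)^{q-1}$ into $\lfloor n(q-1)/4\rfloor$ degree-four monomials,'' but this is impossible on degree grounds unless $4\mid n(q-1)$, and your patch --- one leftover factor $x_1\cdots x_5$ of degree $5$ --- only balances the count when $n(q-1)\equiv 1\pmod 4$. The genuinely hard subcases are exactly the others: for $q\equiv 3\pmod 4$ and $n=10$ one must write $(x_1\cdots x_{10})^2$ as a product of \emph{five} monomials of degree $4$ in $\J(A(q,10,61))$ (the paper gets this from Lemma~\ref{lem_ll_for_large_N}, by unfolding the five cyclic shifts of a degree-$4$ monomial from $A(q,5,61)$ across ten variables --- a step that needs the ``folding'' device precisely because stacking cyclic shifts directly in $A(q,10,61)$ runs into the bound $x_i^q=0$); and for $q\equiv 0\pmod 4$ and $n\in\{10,15\}$ one must write $(x_1\cdots x_{10})^3$ as a product of \emph{seven} monomials (resp. $(x_1\cdots x_{15})^3$ as eleven), which the paper achieves with the explicit decomposition $x_1^6\cdots x_5^6=x_1^4\cdots x_5^4\cdot(x_1\cdots x_5)^2$ transported via Lemma~\ref{lem_ll_for_large_N}(i). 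None of this is supplied or even correctly budgeted in your sketch, so the $(e,m)=(61,4)$ case --- the crux of the proposition --- remains unproved.
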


\begin{proof}
By Proposition~\ref{prop_e_divides_Phi_d}, we may assume that $d=5$.
Since $e \mid \Phi_5(q) = \frac{q^5-1}{q-1}$,
\cite[Lemma~\refPartI{prop_m_and_e1}]{BHHK1} implies that $m \leq 5$.
By \cite[Lemma~\refPartI{lem_LL_elementary}~(iv)]{BHHK1},
we may also assume that $m \geq 3$, i.e. $m \in \{3,4,5\}$.
Moreover, by
\cite[Theorem~\refPartI{thm_upper_and_lower_bounds_I}]{BHHK1},
we may assume that $q \not\equiv 1 \pmod{m}$.
Furthermore,
by \cite[Remark~\refPartI{rem_bound_overview}]{BHHK1},
we may assume that $q \not\equiv 1 \pmod{e}$. Since $q^5-1 = (q-1) \Phi_5(q) \equiv 0 \pmod{e}$ this implies that $\nu := \ord_e(q) = 5$; in particular,
we have $5 \mid n$. Now we discuss the three possibilities for $m$ separately.

(i) Suppose first that $m=5$. Since $e_1 := \gcd(e,q-1) \mid m = 5$
by \cite[Lemma~\refPartI{prop_m_and_e1}]{BHHK1}, this implies $e_1 \in \{1,5\}$.
Since the assumption $e_1=5$ would lead to
the contradiction $m = 5 = e_1 \mid q-1$ we must have $e_1 = 1$. Then $\LL(A(q,5,e)) = \lfloor 5 \frac{q-1}{m} \rfloor + 1$
by \cite[Theorem~\refPartI{thm_upper_and_lower_bounds_I}~(iii)]{BHHK1}.
But then
Remark~\ref{rem_ll_for_rn_from_ll_for_n} implies that
$\LL(A(q,n,e)) = \lfloor n \frac{q-1}{m} \rfloor + 1$ whenever $5 \mid n \in \N$. 

(ii) Suppose next that $m=3$.
Then $e = 11$ by Example~\ref{example_Phi5}.
By 
Example~\ref{example_11},
the result follows in this case.

(iii) Thus we are left with the case $m=4$.
Then $e = 61$ by Example~\ref{example_Phi5}.
Thus $\ord_e(q) = 5$ implies that $q \equiv a \pmod{61}$ for some $a \in \{9,20,34,58\}$
(cf. Table~\ref{table_meq_large}).
By Remark~\ref{rem_reduction_for_fixed_e_to_finitely_many_q}, we may assume that $q = a + 61k$ for some $k \in \{0,1,2,3\}$. 
By Proposition~\ref{prop_reduction_ll_for_n}, we may also assume that $n \in \{5,10,15,20\}$.

If $n=5$ then Proposition~\ref{prop_m_geq_n_minus_1} implies the result.
If $n = 20$ then the result holds by Remark~\ref{rem_ll_for_rn_from_ll_for_n}.
Thus it remains to prove the result for $A(q,10,61)$ and $A(q,15,61)$.

Our hypothesis $q \not\equiv 1 \pmod{m}$ implies that we can ignore the cases $q \in \{9,81,217,241\}$. 

For $q \in \{34,58,70,142\}$ we have $5(q-1) \equiv 1 \pmod{4}$. In this case the (known) result for $n=5$ implies the result for $n=10$ and $n=15$,
by Proposition~\ref{prop_n1_plus_n2}.

Suppose that $q \in \{95,119,131,203\}$.
If $n=10$ then the result follows from Lemma~\ref{lem_ll_for_large_N},
and Corollary~\ref{cor_n1_plus_n2} implies the result for $n=15$.

It remains to deal with the cases $q \in \{20,156,180,192\}$ and $n \in \{10,15\}$. In these cases we have $q \equiv 0 \pmod{4}$ and write 
$$x_1^{q-1} \cdots x_n^{q-1} = x_1^{q-4} \cdots x_n^{q-4} \cdot x_1^3 \cdots x_n^3.$$
As usual, we can write $x_1^{q-4} \cdots x_n^{q-4}$ as a product of $n \frac{q-4}{4}$ monomials of degree $4$ in $A$. Thus it suffices to write $x_1^3 \cdots x_{10}^3$
as a product of $\lfloor  \frac{30}{4} \rfloor = 7$ monomials in $\J(A)$, and to write $x_1^3 \cdots x_{15}^3 $ as a product of $\lfloor \frac{45}{4} \rfloor = 11$
monomials in $\J(A)$.
Now the proof of Lemma~\ref{lem_ll_for_large_N} shows
that it suffices to write $x_1^6 \cdots x_5^6$ as a product of $7$ monomials in $\J(A(q,5,61))$,
and $x_1^9 \cdots x_5^9$ as a product of $11$ monomials in $\J(A(q,5,61))$.

Now observe that $x_1^6 \cdots x_5^6 = x_1^4 \cdots x_5^4 \cdot x_1^2 \cdots x_5^2$ where $x_1^4 \cdots x_5^4$ is a product of $5$ monomials of degree $4$ in $A(q,5,61)$,
and $x_1^2 \cdots x_5^2$ is a product of $2$ monomials of degree $5$ in $A(q,5,61)$. 

Similarly, we have $x_1^9 \cdots x_5^9 = x_1^8 \cdots x_5^8 \cdot x_1 \cdots x_5$ where $x_1^8 \cdots x_5^8$ is a product of $10$ monomials of degree $m=4$ in $A(q,5,61)$.
This finishes the proof of the proposition. 
\end{proof}

\begin{prop}
Let $n$ be a prime number. Then there are at most finitely many $e \in \N$ such that $e \mid \Phi_n(q)$ and $\LL(A(q,n,e)) \leq \lfloor n 
\frac{q-1}{m} \rfloor$ for some $q \in \N$.
\end{prop}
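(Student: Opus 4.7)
The plan is to combine the two key earlier results: Proposition~\ref{prop_e_divides_Phi_n_and_m_smaller_than_n}, which shows that for prime $n$, the condition $m(q,e) < n$ can only be met by finitely many $e$ (ranging over all $q$), and Proposition~\ref{prop_m_geq_n_minus_1}~(ii), which shows that once $e \mid \frac{q^n-1}{q-1}$ and $m(q,e) \geq n-1$, the upper bound in (1) is attained. Since $n$ is prime we have $\Phi_n(q) = \frac{q^n-1}{q-1}$, so the hypothesis $e \mid \Phi_n(q)$ immediately puts us in the setting of Proposition~\ref{prop_m_geq_n_minus_1}.

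More precisely, I would argue as follows. Let $n$ be a fixed prime, and suppose $q, e \in \N$ with $e \mid \Phi_n(q)$. By \cite[Lemma~\refPartI{prop_m_and_e1}]{BHHK1} applied to the relation $\sum_{i=0}^{n-1} q^i = \Phi_n(q) \equiv 0 \pmod{e}$, we obtain the bound $m := m(q,e) \leq n$. If $m = n$, then $m \geq n-1$, so Proposition~\ref{prop_m_geq_n_minus_1}~(ii) yields
\[
   \LL(A(q,n,e)) = \left\lfloor n \frac{q-1}{m} \right\rfloor + 1,
\]
and in particular the hypothesis $\LL(A(q,n,e)) \leq \lfloor n(q-1)/m \rfloor$ cannot be satisfied. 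Consequently, any $e$ contributing to the set under consideration must admit some $q$ with $m(q,e) < n$.

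Now apply Proposition~\ref{prop_e_divides_Phi_n_and_m_smaller_than_n}: for prime $n$, only finitely many $e$ can satisfy $e \mid \Phi_n(q)$ together with $m(q,e) < n$ for some $q \in \N$. This gives the desired finiteness.

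There is no serious obstacle here: the entire argument is a direct composition of two previously established propositions once one notices that the case $m(q,e) = n$ falls under the range in which the upper bound is already known to be attained. The only minor point to check is that the bound $m(q,e) \leq n$ does indeed come out of \cite[Lemma~\refPartI{prop_m_and_e1}]{BHHK1}, which is the same invocation already used at the beginning of Proposition~\ref{prop_m_geq_n_minus_1}.
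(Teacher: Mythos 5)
Your proposal is correct and follows essentially the same route as the paper: the case $m(q,e) = n$ is excluded by Proposition~\ref{prop_m_geq_n_minus_1} (since then the upper bound is attained), and the remaining case $m(q,e) < n$ is covered by the finiteness statement of Proposition~\ref{prop_e_divides_Phi_n_and_m_smaller_than_n}. The paper's own proof is just a terser version of exactly this composition.
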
 

\begin{proof}
If $q,e \in \N$ satisfy $q > 1$, $e \mid \Phi_n(q)$ and $m := m(q,e) = n$ then $\LL(A(q,n,e)) = \lfloor n \frac{q-1}{m} \rfloor + 1$ by 
Proposition~\ref{prop_m_geq_n_minus_1}.
Thus the result follows
from Proposition~\ref{prop_e_divides_Phi_n_and_m_smaller_than_n}.
\end{proof}

For $d=7$, Example~\ref{example_Phi5} gives a list of pairs $(m,e)$. 

\begin{lemma}\label{lemma_loewy_length_three}
 Let $n$ be even, and let $e$ be a proper divisor of $q^n-1$ and a multiple of $q^{\frac{n}{2}}-1$. Then $s_q(ke) = n \frac{q-1}{2}$ for $k = 1,\ldots, z-1$;
 in particular, we have $m(q,e) = n \frac{q-1}{2}$ and $\LL(A(q,n,e)) = 3 = n \frac{q-1}{m(q,e)} + 1$. 
\end{lemma}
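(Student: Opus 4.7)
The plan is to reduce the claim directly to Remark~\ref{rem_calculating_expansions}~(iii). Since $q^{n/2}-1$ divides $e$, I can write $e = (q^{n/2}-1) f$ for some $f \in \N$. Because $e$ is a proper divisor of $q^n-1 = (q^{n/2}-1)(q^{n/2}+1)$, the number $z := (q^n-1)/e = (q^{n/2}+1)/f$ is an integer that is at least $2$, and in particular $z$ divides $q^{n/2}+1$. Hence $q^{n/2} \equiv -1 \pmod z$, so $-1 + z\Z$ lies in the cyclic subgroup $H = \langle q + z\Z\rangle$ of $(\Z/z\Z)^\times$ (note also that $q^n \equiv 1 \pmod z$, so $\ord_z(q) \mid n$, which is the hypothesis needed to apply Proposition~\ref{prop_coeffs_z} and the subsequent remark).

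The second step is to invoke Remark~\ref{rem_calculating_expansions}~(iii): with $-1 + z\Z \in H$, the computation there gives $s_q(ke) = n(q-1)/2$ for every $k \in \{1,\ldots,z-1\}$. Taking the minimum, via the description $m(q,e) = \min\{s_q(ke) : 1 \leq k \leq z-1\}$ recalled at the start of Section~2, yields $m(q,e) = n(q-1)/2$.

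For the Loewy length, the upper bound from \cite[Theorem~\refPartI{thm_upper_and_lower_bounds_I}]{BHHK1} gives $\LL(A(q,n,e)) \leq \lfloor n(q-1)/m \rfloor + 1 = 3$. For the matching lower bound I would exhibit a nonzero product of two radical elements: writing $b_1 = x_1^{i_1}\cdots x_n^{i_n}$ for the basis vector corresponding to the $q$-adic expansion of $e$, the relation $(z-1)e = q^n-1-e$ and the fact that every digit of $q^n-1$ equals $q-1$ force $b_{z-1} = x_1^{q-1-i_1}\cdots x_n^{q-1-i_n}$. Then $b_1, b_{z-1} \in \J(A(q,n,e))$ and their product is $b_z = x_1^{q-1}\cdots x_n^{q-1} \neq 0$, so $\J(A(q,n,e))^2 \neq 0$ and $\LL(A(q,n,e)) \geq 3$.

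There is no real obstacle; the whole argument is an observation followed by the direct application of Remark~\ref{rem_calculating_expansions}~(iii). The one small thing to double-check is that $z \geq 2$, so that there is at least one $k$ in the admissible range and the claim about $s_q(ke)$ is not vacuous; this is exactly what the word \emph{proper} in the hypothesis buys us.
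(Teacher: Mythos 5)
Your proof is correct, but it reaches the digit-sum identity by a different route than the paper. The paper works directly with the integers: since $e$ is a multiple of $q^{n/2}-1$, every $ke$ has the form $r(q^{n/2}-1)$ with $1 \leq r \leq q^{n/2}$, and if $q^{n/2}-r = \sum_{i=0}^{n/2-1} a_i q^i$ then $r(q^{n/2}-1) = \sum_i a_i q^i + \sum_i (q-1-a_i)q^{n/2+i}$ is the $q$-adic expansion, whose digit sum is visibly $\frac{n}{2}(q-1)$. You instead pass to the quotient modulo $z$: from $z \mid q^{n/2}+1$ you get $-1+z\Z \in \langle q+z\Z\rangle$ and then quote Remark~\ref{rem_calculating_expansions}~(iii). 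Both are short; the paper's version is self-contained and makes the complementary-digits structure of the expansion explicit, while yours buys economy by reusing the general orbit-sum machinery of Proposition~\ref{prop_coeffs_z} (at the cost of the standing assumption $1<e<q^n-1$ there, which excludes the degenerate case $q^{n/2}-1=1$, $e=1$ — trivially checked separately). For the lower bound the paper only says ``since $e \neq q^n-1$ we conclude $\LL = 3$''; your explicit factorization $b_1 b_{z-1} = b_z \neq 0$ supplies exactly the justification that is left implicit there, and is valid since $z \geq 2$ guarantees $b_1, b_{z-1} \in \J(A)$.
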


\begin{proof}
 Let $r \in \{1,\ldots, q^{\frac{n}{2}}\}$, and consider the $q$-adic expansion $\sum_{i=0}^{\frac{n}{2}-1} a_iq^i$ of $q^{\frac{n}{2}} - r$. 
 Then $\sum_{i=0}^{\frac{n}{2}-1} a_iq^i + \sum_{i=0}^{\frac{n}{2}-1} (q-1-a_i)q^{\frac{n}{2} + i}$ is the $q$-adic expansion of $r(q^{\frac{n}{2}}-1)$. 
 Thus $s_q(r(q^{\frac{n}{2}}-1)) = \frac{n}{2}(q-1)$; in particular, $m(q,e) = \frac{n}{2}(q-1)$ and $\LL(A(q,n,e)) \leq \lfloor n \frac{q-1}{m(q,e)} \rfloor + 1 = 3$. 
 Since $e \neq q^n-1$ we conclude that $\LL(A(q,n,e)) = 3$.
\end{proof}


\section{Small values of $e$}

Let $e \in \N$ be fixed.
Then Proposition~\ref{prop_reduction_ll_for_n}
and Remark~\ref{rem_reduction_for_fixed_e_to_finitely_many_q} imply that,
in order to show that 
$$\LL(A(q,n,e)) = \left\lfloor n \frac{q-1}{m} \right\rfloor + 1$$
for all $q,n \in \N$ with $q>1$ and $e \mid q^n-1$ it suffices to check
a finite number of pairs $(q,n)$.

\begin{prop}\label{prop_e_at_most_32}
If $e \leq 32$ then
$\LL(A) = \lfloor n \frac{q-1}{m} \rfloor + 1 \quad (\ast)$.
\end{prop}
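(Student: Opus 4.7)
The plan is to dispose of most values $e \leq 32$ by appealing to results already established, and to handle the residual cases by a reduction to a finite check followed by explicit factorization arguments. The values $e \in \{1,2,4,8,16,32\}$ are covered by Theorem~\ref{thm_ll_for_prime_powers}~(iii), the values $e \in \{3,5,7,9,13,17,19,25,27\}$ by Theorem~\ref{thm_ll_for_prime_powers}~(ii), and $e = 11$ by Example~\ref{example_11}. This leaves the composite values $e \in \{6,10,12,14,15,18,20,21,22,24,26,28,30\}$ together with the non-Pierpont primes $e \in \{23,29,31\}$.

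For each such remaining $e$, the first step is to partition the prime residues modulo $e$ according to the cyclic subgroup of $(\Z/e\Z)^\times$ they generate. By \cite[Lemma~\refPartI{same_m_for_same_subgroup}]{BHHK1}, both $m(q,e)$ and $\ord_e(q)$ depend only on this subgroup, and several kinds of subgroups can be eliminated immediately: the trivial subgroup (where $(\ast)$ is vacuous), those with $\ord_e(q)$ even so that $m = 2$ (by \cite[Lemma~\refPartI{lem_m_eq_2}]{BHHK1}), and those with $m \mid q-1$ (by \cite[Theorem~\refPartI{thm_upper_and_lower_bounds_I}~(iii)]{BHHK1}). For each subgroup surviving this sieve, Remark~\ref{rem_reduction_for_fixed_e_to_finitely_many_q} reduces the verification to the finitely many representatives $q \leq \lcm(e,m)$ in that class, and Proposition~\ref{prop_reduction_ll_for_n} then reduces to the finite range $\ord_e(q) \mid n \leq \frac{m}{\gcd(m,(q-1)\ord_e(q))}\ord_e(q)$.

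The second step is to verify $(\ast)$ on the resulting finite list of triples $(q,n,e)$, appealing to general criteria wherever possible. Propositions~\ref{prop_e_divides_Phi_d} and~\ref{prop_e_divides_Phi_5} together with Remark~\ref{rem_e_divides_Phi_d} dispose of those triples with $e \mid \Phi_d(q)$ for $d \in \{1,\ldots,5,6,9,10\}$ or $d$ a power of $2$; Proposition~\ref{prop_m_geq_n_minus_1} handles the cases with $m \geq n-1$; Lemma~\ref{power_sum} handles those triples for which $e$ divides a suitable power sum $\sum_{i=0}^{m-1} q^{ni/(mr)}$; and Proposition~\ref{decomp_into_1} handles triples which arise by shifting $q$ by a multiple of $m$ from some $q'$ with $\lfloor n(q'-1)/m\rfloor = 1$. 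Combining these with Remark~\ref{rem_Q_congruent_q_mod_m}, which propagates $(\ast)$ across residue classes modulo $m$, dramatically trims the list.

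The main obstacle is the small number of residual triples that escape every general criterion; here one must proceed exactly as in Example~\ref{example_11}, exhibiting by direct (and, in practice, computer-assisted) construction a factorization of $(x_1 x_2 \cdots x_n)^{q-1}$ into $\lfloor n(q-1)/m \rfloor$ monomials in $\J(A)$. Together with the upper bound from \cite[Theorem~\refPartI{thm_upper_and_lower_bounds_I}]{BHHK1}, each such factorization yields $(\ast)$ for the triple in question. Because the residual list is finite and, after the reductions above, genuinely short, the entire verification is effective; no triple $(q,n,e)$ with $e \leq 32$ is expected to fail $(\ast)$, since the first counterexamples observed in the database occur only at $e = 33$ (Proposition~\ref{prop_small_LL_e=33}).
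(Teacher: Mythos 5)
Your overall strategy is the same as the paper's: sieve the pairs $(e,q+e\Z)$ using the prime-power theorem, the case $e=11$, the cyclotomic results, the criteria $m=2$ and $m=e_1$, then reduce the survivors to finitely many triples $(q,n,e)$ via Proposition~\ref{prop_reduction_ll_for_n} and Remark~\ref{rem_reduction_for_fixed_e_to_finitely_many_q}, and finish by explicit factorization. However, there is a genuine gap: you never actually carry out the final step. For the residual cases (in the paper these are $e\in\{14,22,23,28,29\}$ and a few others, with specific residues $q$), the entire content of the proposition is the concrete exhibition of decompositions of $(x_1\cdots x_n)^{q-1}$ into $\lfloor n(q-1)/m\rfloor$ monomials of $\J(A)$ --- for instance $x_1\cdots x_{11}=x_1x_3x_7\cdot x_2x_4x_8\cdot x_5x_6x_9x_{10}x_{11}$ for $(q,n,e)=(2,11,23)$, or the shift decompositions for $e=28$. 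Writing that such factorizations can ``in practice'' be found and that no triple ``is expected to fail'' is precisely the assertion to be proved, not a proof of it; a counterexample already occurs at $e=33$ (Proposition~\ref{prop_small_LL_e=33}), so the finiteness and effectiveness of the check does not by itself establish $(\ast)$.

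A secondary error: you eliminate the classes ``with $\ord_e(q)$ even so that $m=2$'' citing \cite[Lemma~\refPartI{lem_m_eq_2}]{BHHK1}. The implication ``$\ord_e(q)$ even $\Rightarrow m(q,e)=2$'' holds only when $-1+e\Z$ lies in $\langle q+e\Z\rangle$ (e.g.\ when $(\Z/e\Z)^\times$ is cyclic, as for prime powers); it fails in general, e.g.\ $m(2,15)=4$ although $\ord_{15}(2)=4$. If your sieve discards such pairs it is unsound; if it retains them, they swell the residual list that you have left unverified. The correct criterion to apply at this stage is simply $m(q,e)=2$, read off from Table~\ref{table_meq_small}.
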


\begin{proof}
Consider the set of all pairs $(e,q+e\Z)$ with $e,q \in \N$,
$e \leq 32$ and $\gcd(q,e) = 1$. 
If $q \equiv 1 \pmod{e}$ then $(\ast)$ holds,
by \cite[Remark~\refPartI{rem_bound_overview}~(viii)]{BHHK1}.
Thus we can eliminate the corresponding pairs $(e,q+e\Z)$.
If $e$ is a power of 2 or a power of an odd Pierpont prime then $(\ast)$ also holds,
by Theorem~\ref{thm_ll_for_prime_powers}.
Thus we can also remove the corresponding pairs.
Similarly, we can eliminate the pairs where $m(q,e) = 2$,
by \cite[Lemma~\refPartI{lem_LL_elementary}~(iv)]{BHHK1}.
Also, by \cite[Theorem~\refPartI{thm_upper_and_lower_bounds_I}~(iii)]{BHHK1},
we can remove the pairs $(e,q+e\Z)$ where $m = e_1 := \gcd(e,q-1)$. 
If $e$ divides $\Phi_d(q)$ for some $d \in \{1,2,3,4,6,8,9,10\}$, 
or if $e$ divides $q^3+q^2+q+1$ or $q^4 +q^2 + 1$ or $q^6+q^3+1$
or $q^{10} + q^5 + 1$
then $(\ast)$ holds,
by the results in Section~\ref{section_prime_power} and Section~\ref{section_polynomials}.
Hence we can also eliminate these pairs. 
The case $e = 11$ has been treated in Example~\ref{example_11}.
The remaining pairs $(e,q+e\Z)$ are given by the following table:

\begin{center}
\footnotesize
\begin{tabular}{c|c|c|c|c|c}
$e$ & $14$ & $15$ & $21$ & $22$ & $23$                          \cr \hline
$q$ & $9,11$ & $4$ & $13$ & $3,5,9,15$ & $2,3,4,6,8,9,12,13,16,18$  \cr 
\end{tabular}
\end{center}

\begin{center}
\footnotesize
\begin{tabular}{c|c|c|c|c|c}
$e$ & $24$ & $26$ & $28$ & $29$ & $31$                      \cr \hline
$q$ & $5$ & $3,9$ & $11,23$ & $7,16,20,23,24,25$ & $2,4,8,16$   \cr 
\end{tabular}
\end{center}

Now consider a pair $(e,q+e\Z)$ where $\ord_e(q) \leq 3$
and $m \mid \ord_e(q)(q-1+er)$ for all $r \in \N$.
Since $(\ast)$ holds for $n \leq 3$ by \cite[Corollary~\refPartI{cor_n_leq_3}]{BHHK1},
$(\ast)$ holds for all admissible $n$,
by Proposition~\ref{prop_reduction_ll_for_n}.
By this argument, we can eliminate the pairs
$(15,4+15\Z)$, $(21,13+21\Z)$, $(24,5+24\Z)$, $(26,3+26\Z)$, and $(26,9+26\Z)$.

Next consider the pair $(14,9+14\Z)$.
Then $m=4$ and $\ord_{14}(9)=3$.
By Remark~\ref{rem_reduction_for_fixed_e_to_finitely_many_q},
it suffices to prove $(\ast)$
for $q \leq \lcm(e,m) = 28$, i.~e., for $q \in \{9,23\}$.
By \cite[Theorem~\refPartI{thm_upper_and_lower_bounds_I}~(iii)]{BHHK1},
$(\ast)$ holds for $q=9$ since $m \mid q-1$ in this case.
Thus we may assume that $q=23$.
Now, by an application of Proposition~\ref{prop_reduction_ll_for_n} with $N=6$,
we may assume that $n \in \{3,6\}$.
If $n=3$ then $(\ast)$ holds by \cite[Corollary~\refPartI{cor_n_leq_3}]{BHHK1}.
Thus we may assume that $n=6$.
But now $(\ast)$ holds, by Lemma~\ref{lem_ll_for_large_N}~(ii).

In a similar way, we can eliminate the pair $(14,11+14\Z)$.

Let $e=22$.
In the relevant cases, we have $m=4$ and $\ord_e(q) = 5$.
By Proposition~\ref{prop_reduction_ll_for_n} (with $N=10$)
we may assume that $n \in \{5,10\}$.
By Remark~\ref{rem_reduction_for_fixed_e_to_finitely_many_q},
we may also assume that $q \leq \lcm(e,m) = 44$,
i.~e., $q \in \{3,5,9,15,25,27,31,37\}$.
By \cite[Theorem~\refPartI{thm_upper_and_lower_bounds_I}]{BHHK1},
we may further assume that $ m \nmid q-1$,
i.~e., $q \in \{3,15,27,31\}$.
Then $m_1 := \gcd(m,q-1) = 2$.
Hence, by Lemma~\ref{lem_ll_for_large_N}, we may assume that $n=5$.

For $q = 3$, we need to write $(x_1 x_2 \ldots x_5)^2$ as a product of
$2$ monomials in $J := \J(A(q,n,e))$,
and $(x_1^2 x_4 x_5)(x_2^2 x_3^2 x_4 x_5)$ is such a decomposition.
For $q \in \{ 15, 27, 31 \}$,
we need to write $(x_1 x_2 \ldots x_5)^{q-1}$ as a product of
$\lfloor 5 \frac{q-1}{4} \rfloor = \frac{5q-7}{4}$ monomials
in $J := \J(A(q,n,e))$.
As usual, we can write $(x_1 x_2 \ldots x_5)^4$ as a product of
the $5$ cyclic shifts of a monomial of degree $4$ in $J$.
Thus we can write $x_1^{q-3} \ldots x_5^{q-3}$ as a product of
$5 \frac{q-3}{4} = \frac{5q-15}{4}$ monomials of degree 4 in $J$.
Hence it suffices to write $(x_1 x_2 \ldots x_5)^2$ as a product of
$2$ monomials in $J$,
which can be constructed from the above decomposition for $q = 3$
with the help of Proposition~\ref{prop_factorization_into_k_monomials}.

Let $e=23$.
In the relevant cases, we have $m=3$ and $\ord_e(q) = 11$.
We may assume that $2 \leq q < e m = 69$ and $n \in \{ 11, 22 \}$.
For two such $q$ that differ by a multiple of $m$, only the smaller one
must be verified, thus we have to consider only $q \in \{ 2, 3, 4 \}$.
Moreover, we can ignore the case $q=4$ since then $m$ divides $q-1$.
\begin{itemize}
\item
    $q = 2$, $n = 11$:
    Write
    $x_1 \cdots x_{11} =
    x_1 x_3 x_7 \cdot x_2 x_4 x_8 \cdot x_5 x_6 x_9 x_{10} x_{11}$.
\item
    $q = 2$, $n = 22$:
    Write
    $x_1 \cdots x_{22} =
    x_1 x_3 x_7 \cdot x_2 x_4 x_8 \cdot x_9 x_{11} x_{15} \cdot
    x_{10} x_{12} x_{16} \cdot x_5 x_{14} x_{20} \cdot
    x_{13} x_{17} x_{22} \cdot x_6 x_{18} x_{19} x_{21}$.
\item
    $q = 3$, $n = 11$:
    A decomposition of $x_1 \cdots x_{22}$ into $7$ monomials as for $q = 2$
    exists also for $q = 3$,
    by Proposition~\ref{prop_factorization_into_k_monomials},
    and can be turned into a decomposition of
    $(x_1 \cdots x_{11})^2$ into $7$ monomials,
    as in Lemma~\ref{lem_ll_for_large_N}~(i).
\item
    $q = 3$, $n = 22$:
    In this case Lemma~\ref{lemma_multiples_of_n} gives the result.
\end{itemize}

Let $e=29$.
For the relevant values of $q$, we have $m(q,e) = 4$ and $\ord_e(q) = 7$,
thus all relevant values of $q$ generate the same group of residues
modulo $e$.

It is enough to verify $(\ast)$ for $2 \leq q < e m = 116$
and $n \in \{ 7, 14, 21 \}$.
For two such $q$ that differ by a multiple of $m$, only the smaller one
must be verified, thus we have to consider only $q$ congruent to one of
$7, 16, 25$ modulo $e$.
This leaves $q \in \{ 7, 16, 25, 54 \}$ to be considered.
Moreover, we can ignore the case $q = 25$ since then $m$ divides $q-1$.

\begin{itemize}
\item
    For $q = 7$,
    $n = 7$ is done by
    decomposing
    \[
      (x_1 x_2 \cdots x_7)^3 = (x_1^2 x_2 x_3) (x_2^2 x_3 x_4)
        (x_4^2 x_5 x_6) (x_5^2 x_6 x_7) (x_1 x_3 x_6 x_7^2),
    \]
    which establishes a decomposition of 
    $(x_1 x_2 \cdots x_7)^6$ into $10$ invariant monomials, and
      $n = 14$ is done by Lemma~\ref{lem_ll_for_large_N}.
    Moreover, $n = 21$ need not be considered by Corollary~\ref{cor_n1_plus_n2}.
\item
    Let $q = 16$.
    For $n = 7$, 
    Proposition~\ref{prop_factorization_into_k_monomials} and the above
    decomposition of $(x_1 x_2 \cdots x_7)^3$ for $q = 7$
    yield the required decomposition of $(x_1 x_2 \cdots x_7)^{15}$,
    using the generic decomposition of $(x_1 x_2 \cdots x_7)^{12}$ into
    cyclic shifts of a monomial of minimal degree.
    The cases $n = 14$ and $n = 21$ follow by Lemma~\ref{lemma_multiples_of_n}.
\item
    Let $q = 54$.
    For $n = 7$, Proposition~\ref{decomp_into_1} strikes.
    For $n = 14$, write
    \[
       (x_1 x_2 \cdots x_{14})^1 = (x_1 x_4 x_7 x_8) (x_2 x_3 x_6 x_{10})
          (x_9 x_{11} x_5 x_{12} x_{13} x_{14}).
    \]
    For $n = 21$,
    construct a decomposition of $(x_1 x_2 \cdots x_7)^3$ into $5$ monomials
    from one for $q = 7$ and $n = 7$,
    again using Proposition~\ref{prop_factorization_into_k_monomials},
    and distribute it to $n = 21$ as in Lemma~\ref{lem_ll_for_large_N}~(i).
\end{itemize}

Suppose that $e = 31$.
In each case, $m = 5 = \ord_e(q)$.
By Remark~\ref{rem_ll_for_rn_from_ll_for_n},
it suffices to prove $(\ast)$ for $n=5$.
Since $e \mid \frac{q^5-1}{q-1}$,
Proposition~\ref{prop_m_geq_n_minus_1} implies $(\ast)$. 

Suppose that $e = 28$ and $q+e\Z \in \{11+e\Z, 23+e\Z\}$.
Then $m=4$ and $\ord_e(q) = 6$. 
Thus, by Remark~\ref{rem_ll_for_rn_from_ll_for_n},
it suffices to prove $(\ast)$ for $n=6$.
If $q \equiv 11 \pmod{e}$ (resp. $q \equiv 23 \pmod{e}$) then
$x_1^2\ldots x_6^2$ is the product of the 3 monomials
$x_1^2x_2x_4$, $x_3^2x_4x_6$ and $x_2x_5^2x_6$
(resp. $x_1x_3x_4^2$, $x_3x_5x_6^2$ and $x_1x_2^2x_5$)
in $J := \mathrm{J}(A(q,6,e))$.
Thus $x_1^{q-1} \ldots x_6^{q-1}$ is the product of $3 \frac{q-1}{2}$ elements
in $J$, so that $\LL(A(q,6,e)) > \lfloor 6 \frac{q-1}{m} \rfloor$. 
\end{proof}

The following result establishes an infinite series of examples $A(q,n,e)$, with $e = 33$,
for which the upper bound on the Loewy length from
\cite[Theorem~\refPartI{thm_upper_and_lower_bounds_I}]{BHHK1} is not attained.

\begin{prop}\label{prop_small_LL_e=33}
Let $(q,e) = (5,33)$.
Then $m = m(q,e) = 3$,
and $\LL( A(q,n,e) ) = \lfloor \frac{(q-1)n}{m} \rfloor + \epsilon$,
where $\epsilon = 0$ if $n \equiv 10 \pmod{30}$,
and $\epsilon = 1$ otherwise.
\end{prop}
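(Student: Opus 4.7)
First I verify $m(5,33) = 3$. Since $\ord_{33}(5) = 10$, admissible $n$ are multiples of $10$; the powers of $5$ modulo $33$ form $\{1,4,5,14,16,20,23,25,26,31\} \subset (\Z/33\Z)^\times$, which does not contain $-1$, so $m \geq 3$, while $1 + 1 + 5^4 \equiv 0 \pmod{33}$ gives $m \leq 3$. Thus $\lfloor 4n/3\rfloor + 1$ is the upper bound on $\LL(A(5,n,33))$ from \cite[Theorem~\refPartI{thm_upper_and_lower_bounds_I}]{BHHK1}. I make free use of a classification of low-degree invariant monomials in $A \cap \J$: the degree-$3$ invariants are exactly the $x_i^2 x_j$ with $j - i \equiv 4 \pmod{10}$; there are none of degree $1$ or $2$; and, since $\sum_{i=1}^n 5^{i-1} \equiv 0 \pmod{33}$ whenever $10 \mid n$, also none of degree $4n - 1$ (such a monomial would be $(x_1 \cdots x_n)^4 / x_k$, whose invariance would require the nonexistent degree-$1$ invariant $x_k$).

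To show $\LL(A) \leq (4n-1)/3$ when $n \equiv 10 \pmod{30}$, I prove $\J^{(4n-1)/3} = 0$. Any product of $t := (4n-1)/3$ radical monomials is a monomial of degree $T$ with $4n-1 \leq T \leq 4n$; the classification kills $T = 4n-1$, and $T = 4n$ forces exactly one degree-$4$ factor $\mu$ together with $t-1$ degree-$3$ factors whose product is $(x_1 \cdots x_n)^4$. Letting $f_i$ count the degree-$3$ shift at position $i$ and aggregating the exponent equations $2 f_i + f_{i-4 \bmod 10} = 4 - \mu_i$ by residue modulo $10$, the resulting $\Z$-system decouples over the two orbits of $r \mapsto r - 4 \bmod 10$ on $\Z/10\Z$ (odd residues and even residues, each of size $5$). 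On each orbit, the identity $(2+T)(16 - 8T + 4T^2 - 2T^3 + T^4) = 33$ in $\Z[T]/(T^5-1)$ yields a necessary congruence modulo $33$ for integer solvability, and the mod-$3$ consistency $3 \sum_{i \in I} f_i = 4|I| - \sum_{i \in I} \mu_i$ forces $\sum_{i \in I} \mu_i = 2$ on each orbit. The congruence then takes the form $\sum_{i \in I} \mu_i \cdot w_i \equiv 11 \pmod{33}$ for weights $w_i$ that run through the subgroup $H = \langle 5^2 \rangle = \{1, 4, 16, 25, 31\}$ of $(\Z/33\Z)^\times$, making the left-hand side a sum of two elements (with multiplicity) of $H$. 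Direct enumeration shows these sums are precisely $\{2, 5, 8, 14, 17, 20, 23, 26, 29, 32\} \pmod{33}$, notably missing $11$; this obstruction proves the upper bound.

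For the matching lower bound and for the equality cases, I give explicit decompositions. For $n = 10$, the product of the ten degree-$3$ shifts $x_i^2 x_j$ with $j \equiv i + 4 \pmod{10}$ (one for each $i$) equals $(x_1 \cdots x_{10})^3$, and the two degree-$5$ factors $x_1 x_2 x_3 x_5 x_9$ and $x_4 x_6 x_7 x_8 x_{10}$ (each invariant since $1+5+25+31+4 = 66 = 2 \cdot 33$ and $26+23+16+14+20 = 99 = 3 \cdot 33$) multiply to $x_1 \cdots x_{10}$, so $(x_1 \cdots x_{10})^4$ factors into $12 = \lfloor 40/3 \rfloor - 1$ radicals, giving $\LL \geq 13$. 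For $n \equiv 0 \pmod{30}$ the modular obstruction disappears since $\sum_{i \in I} \mu_i \equiv 0 \pmod 3$ now admits $\mu = 0$, and the system $(2+T)F = 4(n/10)\mathbf{1}$ has the integer solution $F = (2n/15)\mathbf{1}$; a transportation-style argument within each residue class (of size $n/10 \geq 3$) realizes an all-degree-$3$ decomposition of the top monomial into $4n/3$ factors. The remaining cases ($n \equiv 20 \pmod{30}$ and larger $n \equiv 10 \pmod{30}$) follow by splitting $n$ into blocks from $\{10, 20, 30\}$ and iterating Proposition~\ref{prop_n1_plus_n2}; the base case $n = 20$ is handled directly with $24$ degree-$3$ factors and $2$ degree-$4$ factors (using the orbit parameters $(M_o, M_e) = (4,4)$) to give a length-$26$ decomposition of the top monomial.

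The main obstacle is the modular obstruction in the second paragraph: pinning down that $11 \pmod{33}$ is the unique residue class $\equiv 2 \pmod 3$ missing from the set of sums of two elements of $H$, and correctly setting up the Diophantine orbit analysis with the inversion identity in $\Z[T]/(T^5-1)$. The length-$26$ construction for $n = 20$ is also delicate, since the naive block decomposition falls two factors short and the extra two factors must come from genuinely cross-block invariants.
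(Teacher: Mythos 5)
Your argument is correct in its essentials but takes a genuinely different route from the paper's at the decisive step, namely the upper bound $\LL \leq \lfloor 4n/3 \rfloor$ for $n \equiv 10 \pmod{30}$. The paper argues combinatorially: each cyclic shift $x_i^2 x_{\overline{i+4}}$ can occur at most twice among the factors, a shift occurring twice forbids the shift based at $i+6$, and a pairing count caps the number of degree-$3$ factors below what $t$ factors of total degree $4n$ would require. You instead aggregate the exponent equations by residue modulo $10$, invert $2+T$ via the identity $(2+T)(16-8T+4T^2-2T^3+T^4)=33$ in $\Z[T]/(T^5-1)$, and observe that the resulting congruence demands that a sum of two elements of $\langle -2\rangle = \{1,4,16,25,31\} \subset (\Z/33\Z)^\times$ be congruent to $11$, which fails. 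I checked the identity, the forced orbit sums $\sum_{i\in I}\mu_i = 2$, the value $11C \equiv 11 \pmod{33}$, and the sum-set $\{2,5,8,14,17,20,23,26,29,32\}$; all are correct, and your obstruction is uniform in $n \equiv 10 \pmod{30}$, which is arguably cleaner than the paper's generalized multiplicity bookkeeping with the threshold $k_0 = 4a+1$. Your explicit $12$-factor decomposition for $n=10$, using the degree-$5$ invariants $x_1x_2x_3x_5x_9$ and $x_4x_6x_7x_8x_{10}$ (exponent sums $66$ and $99$), also checks out; the paper uses $x_1x_4x_6x_7$ and $x_2x_3x_5x_8x_9x_{10}$ instead.

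Two loose ends. First, your classification of the degree-$3$ invariants as ``exactly the $x_i^2 x_j$ with $j-i \equiv 4 \pmod{10}$'' is literally false for $n>10$: for $n=40$, say, $x_1x_5x_{11}$ is invariant since $1+q^4+q^{10} \equiv 0 \pmod{33}$. This does not sink the argument, because every degree-$3$ invariant still has the residue profile (total exponent $2$ on class $r$, total exponent $1$ on class $r+4$), and the aggregated system only depends on these profiles; but you should state and use the profile claim rather than the monomial-level claim, and correspondingly write the aggregated equations as $2F_r + F_{r-4} = 4(n/10) - M_r$ over residue classes rather than per position. Second, for $n=20$ you assert a $26$-factor decomposition with $24$ degree-$3$ and $2$ degree-$4$ factors without exhibiting it; the necessary congruence is indeed satisfiable there ($\sum_r M_r w_r \equiv 22 \pmod{33}$, e.g.\ $1+1+4+16$), but a complete proof needs the witness --- the paper supplies one, namely $20$ cyclic shifts of a degree-$3$ monomial together with a $6$-factor decomposition of $(y_1\cdots y_{10})^2$ containing two degree-$4$ factors. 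The $n \equiv 0 \pmod{30}$ case is routine and could simply cite Lemma~\ref{lem_ll_for_large_N}.
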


\begin{proof}
We have $\ord_e(q) = 10$,
$m = m(q,e) > 2$ because $q^5 \equiv 23 \not\equiv -1 \pmod{e}$,
and $2 + q^4 = 627 = 19 e$ establishes $m = 3$.

Let $n = 10$.
The values $(1 + q^i + q^j) \pmod{e}$, for $0 \leq i \leq j < n$,
are as follows.
\[
\begin{array}{r|rrrrrrrrrr}
   &  0 &  1 &  2 &  3 &  4 &  5 &  6 &  7 &  8 &  9 \\ \hline
 0 &  3 &  7 & 27 & 28 &  0 & 25 & 18 & 16 &  6 & 22 \\
 1 &    & 11 & 31 & 32 &  4 & 29 & 22 & 20 & 10 & 26 \\
 2 &    &    & 18 & 19 & 24 & 16 &  9 &  7 & 30 & 13 \\
 3 &    &    &    & 20 & 25 & 17 & 10 &  8 & 31 & 14 \\
 4 &    &    &    &    & 30 & 22 & 15 & 13 &  3 & 19 \\
 5 &    &    &    &    &    & 14 &  7 &  5 & 28 & 11 \\
 6 &    &    &    &    &    &    &  0 & 31 & 21 &  4 \\
 7 &    &    &    &    &    &    &    & 29 & 19 &  2 \\
 8 &    &    &    &    &    &    &    &    &  9 & 25 \\
 9 &    &    &    &    &    &    &    &    &    &  8
\end{array}
\]
We see that the only possibilities for $1 + q^i + q^j \equiv 0 \pmod{e}$
are $2 + q^4$ and $1 + 2 q^6$.
Thus all monomials of degree $m$ in $A = A(q,n,e)$ are $x_1^2 x_5$
and its cyclic shifts.

We show that $(x_1 x_2 \cdots x_n)^{q-1} \in A$ is a product of $12$
monomials in $A$ but not a product of $13$ such monomials.

A decomposition into $12$ factors
is given by decomposing $(x_1 x_2 \cdots x_n)^3$ into $10$ factors of
degree $3$ (taking each cyclic shift of $x_1^2 x_5$ with multiplicity one),
and then writing $x_1 x_2 \cdots x_{10}$ as a product of $x_1 x_4 x_6 x_7$
(since $1 + q^3 + q^5 + q^6 = 572 e$) and
$x_2 x_3 x_5 x_8 x_9 x_{10}$.

In any factorization of $(x_1 x_2 \cdots x_n)^{q-1}$,
each of the cyclic shifts of $x_1^2 x_5$ can appear with multiplicity
at most two.
For each monomial $x_i^2 x_{\overline{i+4}}$ with multiplicity two,
the monomial $x_{\overline{i+6}}^2 x_i$ cannot appear at all,
hence the number of cyclic shifts with multiplicity one is at most $10-2k$.
Thus the multiplicity of degree $3$ monomials in a factorization
is at most $2k + (10-2k) = 10$, which is too small for a factorization
into $13$ monomials.

Let $n = 20$.
We have to decompose $(x_1 \cdots x_{20})^4 \in A = A(q,n,e)$ into
$26$ monomials in $A$.
For that, decompose $(x_1 \cdots x_{20})^3$ into $20$ monomials (take the
cyclic shifts of a monomial of degree $3$) and decompose
$(y_1 \cdots y_{10})^2$ with the following exponent vectors:
\[
   \begin{array}{rrrrrrrrrr}
    (2, 0, 0, 0, 1, 0, 0, 0, 0, 0), \\
    (0, 2, 0, 0, 0, 1, 0, 0, 0, 0), \\
    (0, 0, 1, 0, 1, 1, 0, 0, 0, 1), \\
    (0, 0, 0, 2, 0, 0, 0, 1, 0, 0), \\
    (0, 0, 0, 0, 0, 0, 2, 1, 0, 1), \\
    (0, 0, 1, 0, 0, 0, 0, 0, 2, 0).
   \end{array}
\]

For $n = 30$, the maximal decomposition follows from
Lemma~\ref{lem_ll_for_large_N}.

Applying Corollary~\ref{cor_n1_plus_n2} to the above results
yields the claim for $n \equiv 0 \pmod{30}$ and $n \equiv 20 \pmod{30}$.

It remains to compute the Loewy length for $n \equiv 10 \pmod{30}$
and $n > 10$.
The above proof for $n = 10$ can be generalized, as follows.
Let $n = 30 a + 10$, for a nonnegative integer $a$.
We claim that the Loewy length of $A = A(5,n,33)$ is $40 a + 13$.
By Proposition~\ref{prop_n1_plus_n2}, this value is a lower bound:
Take $n_1 = 10, n_2 = 30 a$.
Thus we have to show that $(x_1 \cdots x_n)^4$ cannot be decomposed
into $40 a + 13$ factors in $A$.
As in the case $n = 10$,
we show that $(y_1 \cdots y_{10})^{4N/n}$ cannot be decomposed
into $40 a + 13$ allowed monomials.
Since $4N = 120 a + 40 = (40 a + 12) \cdot 3 + 1 \cdot 4$,
we would need $40 a + 12$ factors of degree $3$ for such a factorization.
The possible factors of degree $3$ are the $10$ cyclic shifts of $y_1^2 y_5$,
each with multiplicity at most $2N/n = 6 a + 2$.
Set $k_0 = 4 a + 1$.
If the multiplicity of a degree $3$ monomial $y_i^2 y_{\overline{i+4}}$
as a factor is $k > k_0$ then
the multiplicity of the monomial $y_{\overline{i+6}}^2 y_i$ is at most
$k_1 \leq 4N/n - 2k \geq 0$.
Thus
\[
   k + k_1 \leq 4N/n - k
           < 4N/n - k_0 = 12 a + 4 - (4 a + 1) = 8 a + 3
           \leq 2 k_0 + 1,
\]
which means $k + k_1 \leq 2 k_0$.
Thus we can consider the ten cyclic shifts in pairs,
and the total number of degree $3$ monomials is at most $10 k_0 < 40 a + 12$.
This is too small for a factorization into $40 a + 13$ monomials.
\end{proof}

\begin{rem}
Explicit computations show that the Loewy vector of $A(5,10,33)$ is
\[
   (1, 440, 4296, 17770, 42595, 66482, 71186, 53392, 27865, 9710,
   2011, 180, 1).
\]
\end{rem}



\begin{prop}\label{prop_LL_for_large_m_relative_to_e}
If $m = m(q,e) \geq e/3$ then
$\LL(A(q,n,e)) = \lfloor n \frac{q-1}{m} \rfloor +1$.
\end{prop}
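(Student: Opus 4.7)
The plan is to invoke the classification of pairs $(q,e)$ with $m(q,e) \geq e/3$ that was already established in Propositions~\ref{prop_large_m_relative_to_e} and~\ref{q_bigger_e}, and then to verify the conclusion case by case, reducing each case to a previously proven statement.

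First I would dispose of the degenerate cases. If $e \leq 1$ the statement is trivial, so assume $e \geq 2$ and $\gcd(q,e) = 1$. If $q \equiv 1 \pmod{e}$, then by \cite[Example~\refPartI{example_m_trivial}]{BHHK1} we have $m = e$, which divides $q-1$, and \cite[Theorem~\refPartI{thm_upper_and_lower_bounds_I}~(iii)]{BHHK1} immediately gives the claimed equality. Thus we may assume $q \not\equiv 1 \pmod{e}$, so that Proposition~\ref{q_bigger_e} applies (and Proposition~\ref{prop_large_m_relative_to_e} covers the special case $q < e$).

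Next I would handle the three infinite families (i), (ii), (iii) simultaneously. The key observation is that in each of these cases the classification explicitly writes $m$ as $(q-1)/k$ for some divisor $k$ of $q-1$: in case~(i) we have $m = e/2 = (q-1)/k$ with $k$ an odd divisor of $q-1$, in case~(ii) we have $m = e/3 = (q-1)/k$ with $k \equiv 1 \pmod 3$ a divisor of $q-1$, and in case~(iii) again $m = e/3 = (q-1)/k$ with $k \equiv 2 \pmod 3$ a divisor of $q-1$. In all three situations $m$ divides $q-1$, and \cite[Theorem~\refPartI{thm_upper_and_lower_bounds_I}~(iii)]{BHHK1} gives the desired equality.

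Finally I would dispose of the sporadic case (iv). The table in Proposition~\ref{q_bigger_e} shows that $e \in \{3,5,7,8,15,24\}$, all of which satisfy $e \leq 32$. Hence Proposition~\ref{prop_e_at_most_32}, proved earlier in this section, applies and gives $\LL(A(q,n,e)) = \lfloor n(q-1)/m \rfloor + 1$.

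In summary, the only real work was establishing the classification in Propositions~\ref{prop_large_m_relative_to_e} and~\ref{q_bigger_e}; given that classification, the proof is essentially bookkeeping. I do not anticipate any genuine obstacle, since the three infinite families automatically satisfy $m \mid q-1$ and the finitely many exceptional $e$-values are already covered by Proposition~\ref{prop_e_at_most_32}.
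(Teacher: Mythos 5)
Your proposal is correct and follows essentially the same route as the paper: reduce to the classification in Propositions~\ref{prop_large_m_relative_to_e} and~\ref{q_bigger_e}, observe that in the three infinite families $m$ divides $q-1$ (the paper phrases this as $m = e_1 = \gcd(e,q-1)$, which amounts to the same thing as your $m = (q-1)/k$), and handle the sporadic case via Proposition~\ref{prop_e_at_most_32}.
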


\begin{proof}
As mentioned in the proof of Proposition~\ref{q_bigger_e},
we have $m = e_1 = \gcd(e, q-1)$ in the cases (i)--(iii),
and $e \leq 32$ in case~(iv).
Thus the claim follows from
\cite[Theorem~\refPartI{thm_upper_and_lower_bounds_I}]{BHHK1}
and Proposition~\ref{prop_e_at_most_32}.
\end{proof}



\begin{rem}
Fix $q \in \N$ such that $q > 1$.
By \cite[Theorem~\refPartI{thm_upper_and_lower_bounds_I}]{BHHK1},
we have
\[
   \LL(A(q,n,e)) \leq \left\lfloor n \frac{q-1}{m(q,e)} \right\rfloor + 1,
\]
and equality holds for $n \leq 3$,
by \cite[Corollary~\refPartI{cor_n_leq_3}]{BHHK1}.

In order to find the smallest $n$ such that some divisor $e$ of $q^n-1$
exists for which the above inequality is strict,
we may proceed as follows.

For increasing values of $n \geq 4$,
run through all divisors $e$ of $q^n-1$.
If none of the results from \cite{BHHK1} or from this paper implies
that equality holds for the triple $(q,n,e)$ then
explicitly compute $\LL(A(q,n,e))$
(for example using \cite[Proposition~\refPartI{prop_lambda}]{BHHK1})
and $m(q,e)$, and check.

Here is the list for $2 \leq q \leq 9$ which includes the algebra $A(5,10,33)$ of Proposition~\ref{prop_small_LL_e=33}.

\begin{center}
\begin{tabular}{c|r|l}
    $q$ & min. $n$ & $e$ such that the min. $n$ yields strict inequality \\
    \hline
    $2$ &     $20$ & $8\,525$ \\
    $3$ &     $12$ & $35, 1\,168, 7\,592$ \\
    $4$ &     $10$ & $275$ \\
    $5$ &     $10$ & $33$ \\
    $6$ &     $13$ & $3\,433$ \\
    $7$ &     $12$ & $2\,241\,504, 3\,735\,840, 23\,660\,320, 29\,139\,552,
                     70\,980\,960, 133\,089\,300$, \\
        &          & and perhaps others \\
    $8$ &     $12$ & $72\,412\,515, 278\,216\,505, 723\,362\,913$,
                     and perhaps others \\
    $9$ &      $9$ & $247$
\end{tabular}
\end{center}

Note that the ``brute force'' computation of the Loewy length
can be expensive for high dimensional algebras $A(q,n,e)$,
i.~e., small values of $e$.
We used a combination of programs in {\GAP} \cite{GAP} and
Julia \cite{Julia} for these computations.
\end{rem}

The following result gives more direct information concerning the Loewy length of $A(q,2,e)$.
However,
it seems to be difficult to prove similar results for $A(q,n,e)$
in case $n \geq 3$.

\begin{cor}
Let $e$ be a divisor of $q^2-1$,
and set $e_1 = \gcd(e,q-1)$, $e_2 = \gcd(e,q+1)$, and $A = A(q,2,e)$.
Then $\LL(A) = 2 \frac{q-1}{e_1} + 1$ if
$e_1 \geq e_2$ or both $e$ and $\frac{q^2-1}{e}$ are even,
and $\LL(A) = \frac{q-1}{e_1} + 1$ otherwise.
\end{cor}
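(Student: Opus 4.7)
The plan is to observe that this is a direct combination of two earlier results, applied in the case $n=2$.

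First, I would invoke \cite[Corollary~\refPartI{cor_n_leq_3}]{BHHK1}, which asserts that equality~(2) holds for $n \leq 3$. In particular, since $e \mid q^2-1$, we have
\[
   \LL(A(q,2,e)) = \left\lfloor 2 \frac{q-1}{m} \right\rfloor + 1,
\]
where $m = m(q,e)$. Thus the problem reduces to determining $m$.

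Next, I would apply Proposition~\ref{m_eq_e1}, which handles exactly the case $\ord_e(q) \mid 2$: it states that $m = e_1$ if $e_1 \geq e_2$ or both $e$ and $(q^2-1)/e$ are even, and $m = 2 e_1$ otherwise. Substituting back, and using $e_1 \mid q-1$ (which makes the floor unnecessary since $2(q-1)/e_1$ and $(q-1)/e_1$ are integers), gives
\[
   \LL(A) = \left\lfloor \frac{2(q-1)}{e_1} \right\rfloor + 1 = \frac{2(q-1)}{e_1} + 1
\]
in the first case, and
\[
   \LL(A) = \left\lfloor \frac{2(q-1)}{2 e_1} \right\rfloor + 1 = \frac{q-1}{e_1} + 1
\]
in the second case.

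There is no real obstacle here; the content is entirely in Corollary~\ref{cor_n_leq_3} and Proposition~\ref{m_eq_e1}. The only thing worth remarking on is that one should mention why the floor drops away, namely $e_1 \mid q-1$ by definition of $e_1$.
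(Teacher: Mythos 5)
Your proposal is correct and follows exactly the paper's own argument: cite Corollary 7.1 of Part I for $\LL(A)=\lfloor 2\frac{q-1}{m}\rfloor+1$ when $n=2$, then apply Proposition~\ref{m_eq_e1} to identify $m$ as $e_1$ or $2e_1$ in the respective cases. Your added remark that the floor disappears because $e_1 \mid q-1$ is a small but welcome clarification that the paper leaves implicit.
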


\begin{proof}
Let $m = m(q,e)$.
We know from \cite[Corollary~\refPartI{cor_n_leq_3}]{BHHK1} that
$\LL(A) = \lfloor 2 \cdot \frac{q-1}{m} \rfloor + 1$ in the case $n = 2$.
Now apply Proposition~\ref{m_eq_e1}.
\end{proof}


\section{Small values of $z$}

Now we change our perspective,
and focus on $z = (q^n-1)/e$ instead of $e$.
For convenience, we introduce the notation
$A[q,n,z]$ and $m[q,n,z]$ for $A(q,n,e)$ and $m(q,e)$, respectively.

Let us fix a number $z$.
Since $z+1$ is the dimension of $A[q,n,z]$,
a finite set of parameters $(q, n)$ suffices
to cover all $A[q,n,z]$, up to isomorphism.

First we observe that only the smallest possible $n$
has to be considered,
which is the multiplicative order $\ord_z(q)$ of $q$ modulo $z$.

\begin{lemma}\label{lem_only_minimal_n}
If $z$ divides $q^n-1$ and $N$ is a multiple of $n$
then $A[q,n,z] \cong A[q,N,z]$.
\end{lemma}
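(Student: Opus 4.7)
The plan is to reduce this to the reduction-of-$n$ theorem from Part I, namely \cite[Theorem~\refPartI{thm_reduction_of_n}]{BHHK1}, by translating the $A[\cdot,\cdot,\cdot]$ notation back to the original $A(\cdot,\cdot,\cdot)$ notation and checking that the hypothesis of that theorem is satisfied.

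First I would unfold the definitions. By convention $A[q,n,z] = A(q,n,e)$ with $e := (q^n-1)/z$, and $A[q,N,z] = A(q,N,e')$ with $e' := (q^N-1)/z$; both are well-defined because $z$ divides $q^n-1$ and hence, since $n \mid N$, also divides $q^N-1 = (q^n-1)(1 + q^n + q^{2n} + \cdots + q^{N-n})$. In particular
\[
  e' \;=\; e \cdot \frac{q^N-1}{q^n-1}.
\]
This is exactly the shape of the hypothesis of \cite[Theorem~\refPartI{thm_reduction_of_n}]{BHHK1} (with $n$ and $N$ playing the roles of $n'$ and $n$, and $e$, $e'$ playing the roles of $e'$, $e$ there). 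That theorem asserts an isomorphism of the corresponding algebras.

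Applying it gives $A(q,N,e') \cong A(q,n,e)$, i.e.\ $A[q,N,z] \cong A[q,n,z]$, which is the desired conclusion. Since we are quoting a theorem of Part I directly, there is no real obstacle here; the only content is recognizing that the parameter $z = (q^n-1)/e$ is preserved under the reduction construction of Part I, so that iterating $n \mapsto rn$ keeps $z$ fixed while inflating $e$ accordingly. This observation is what legitimizes working with a single minimal exponent $n = \ord_z(q)$ in the remainder of the section.
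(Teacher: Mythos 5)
Your proof is correct and is essentially identical to the paper's: both unfold $A[q,N,z]=A\bigl(q,N,(q^N-1)/z\bigr)$, observe that $(q^N-1)/z = \frac{q^N-1}{q^n-1}\cdot\frac{q^n-1}{z}$, and invoke the reduction theorem of Part I. No gaps.
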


\begin{proof}
By \cite[Theorem~\refPartI{thm_reduction_of_n}]{BHHK1},
\begin{eqnarray*}
    A[q,N,z] & = & A(q,N,(q^N-1)/z) =  A(q,N,(q^N-1)/(q^n-1) \cdot (q^n-1)/z) \\
             & \cong & A(q,n,(q^n-1)/z) = A[q,n,z].
\end{eqnarray*}
\end{proof}

\begin{rem}
Note that the upper bound from
\cite[Theorem~\refPartI{thm_upper_and_lower_bounds_I}~(i)]{BHHK1}
on the Loewy length of $A[q,N,z]$ is attained if and only if
it is attained for $A[q,n,z]$,
by \cite[Remark~\refPartI{rem_bound_overview}~(v)]{BHHK1}.
\end{rem}

\begin{example}\label{example_congruence_q_mod_z_pm_1}
\begin{enumerate}
\item
    By \cite[Corollary~\refPartI{cor_uniserial_case}]{BHHK1},
    $A[q,n,z]$ is uniserial if and only if
    $(q^n-1)/z$ is a multiple of $(q^n-1)/(q-1)$,
    that is, if $q \equiv 1 \pmod{z}$.
    In this case, Example~\ref{example_s_q_equal}~(i)
    shows that $s_q(ke) = k n \frac{q-1}{z}$, for $1 \leq k < z$.
\item
    Let $q \equiv -1 \pmod{z}$ and $z > 2$.
    Then $n$ is even, and we may choose $n = 2$, by 
    Lemma~\ref{lem_only_minimal_n}.
    As in Example~\ref{example_s_q_equal}~(ii),
    we have $s_q(ke) = q-1$, for $1 \leq k < z$.
    (If we admit larger values of $n$ then we get $s_q(ke) = n (q-1) / 2$.)
    Thus all monomials $b_k$, with $1 \leq k < z$,
    have the same degree and therefore belong to the same Loewy layer.
    In particular, we get $m[q,n,z] = n(q-1)/2$ and $\LL(A[q,n,z]) = 3$,
    which is equal to the upper bound
    $\lfloor n(q-1) / m[q,n,z] \rfloor + 1$.
\end{enumerate}
\end{example}

In the above examples, we have seen that the structure of
$A[q,n,z]$ depends only on the residue class of $q$ modulo $z$.
The following corollary will show that this holds in general,
which means that we have to consider only prime residues $q$ modulo $z$.
(As before, we replace $q = 1$ by $q = z+1$.)

For that, we need a technical lemma that describes,
in terms of residues modulo $z$,
whether the product of two basis vectors $b_k$, $b_l$
in $A[q,n,z]$ is zero.

\begin{lemma}\label{lem_carry_in_terms_of_z}
In the situation of Proposition~\ref{prop_coeffs_z},
there is a carry in the addition of the vectors of $q$-adic coefficients
of $k e$ and $l e$ if and only if
there is an index $i \in \{ 1, 2, \ldots, n \}$ such that
$\overline{k q^i} + \overline{l q^i} \geq z$ holds
and not all values 
$\overline{k q^j} + \overline{l q^j}$, $1 \leq j \leq n$,
are equal to $z$.
\end{lemma}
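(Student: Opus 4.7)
The plan is to translate the carry condition into the telescoping identity from Proposition~\ref{prop_coeffs_z}. Applying that proposition to both $k$ and $l$, the column sum at position $i$ is
\[
   D_i := a_{k,i} + a_{l,i} = \frac{q\, s_{n-i} - s_{n-i+1}}{z} \quad (1 \leq i \leq n),
\]
where $s_j := \overline{k q^j} + \overline{l q^j}$ and $s_0 = s_n = k + l$. Summing gives both $\sum_{i=1}^n D_i q^{i-1} = (k+l) e$ and, on the digit-sum level, $\sum_{i=1}^n D_i = \tfrac{q-1}{z} \sum_{j=1}^n s_j$. A ``carry'' in the sense of the lemma is the assertion $D_i \geq q$ for some $i$.

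I would then split into three cases according to $k+l$ versus $z$. If $k + l = z$, note that $\gcd(q, z) = 1$ (from $e z = q^n - 1$) together with $1 \leq k, l < z$ forces $\overline{k q^j}, \overline{l q^j} \geq 1$, so $s_j \geq 2$; combined with $s_j \equiv (k+l) q^j \equiv 0 \pmod{z}$ and $s_j \leq 2z - 2$, this yields $s_j = z$ for every $j$. Then $D_i = q - 1$ for every $i$, no carry occurs, and the right-hand side of the equivalence also fails. If $k + l > z$, then $\sum_i D_i q^{i-1} = (k+l) e > z e = q^n - 1$, which cannot hold with all $D_i < q$; hence a carry always occurs, and $s_n = k + l > z$ shows that the right-hand side holds as well.

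The main case is $k + l < z$, in which the clause ``not all $s_j = z$'' is automatic since $s_n = k + l < z$, so it remains to show that a carry occurs if and only if some $s_j \geq z$. Here $s_j \in \{ \overline{(k+l) q^j}, \overline{(k+l) q^j} + z \}$. If every $s_j < z$ then $s_j = \overline{(k+l) q^j}$, and substituting back into the formula for $D_i$ we recognise $D_i$ as the $i$-th $q$-adic digit of $(k+l) e$ given by Proposition~\ref{prop_coeffs_z}; in particular $D_i < q$, so no carry occurs. Conversely, if some $s_j \geq z$, the digit-sum identity gives
\[
  \sum_i D_i
    = \tfrac{q-1}{z} \sum_j s_j
    > \tfrac{q-1}{z} \sum_j \overline{(k+l) q^j}
    = s_q((k+l) e),
\]
whereas a carry-free configuration would force $(D_1, \ldots, D_n)$ to be the $q$-adic expansion of $(k+l) e$ and the sum to equal $s_q((k+l) e)$, a contradiction.

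The main obstacle is the bookkeeping in this last case: the strict inequality on digit sums has to be leveraged against the carry-free expansion of $(k+l) e$, which means invoking the two identities $\sum_i D_i q^{i-1} = (k+l) e$ and $\sum_i D_i = \tfrac{q-1}{z} \sum_j s_j$ simultaneously. Apart from that delicate step, the proof is a direct extension of Proposition~\ref{prop_coeffs_z} from one expansion to the sum of two.
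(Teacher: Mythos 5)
Your proof is correct, but it takes a genuinely different route from the paper's. The paper works entirely locally: for the ``only if'' direction it reads off $s_{n-i}\geq z$ directly from the inequality $\bigl(s_{n-i}q - s_{n-i+1}\bigr)/z \geq q$, and for the ``if'' direction it selects a specific index (the largest $i$ at which $s_{n-i}$ is maximal, decreased until the predecessor is strictly smaller, using the cyclicity $s_0=s_n$) and verifies that a carry is generated exactly there. You instead argue globally: you split on $k+l$ versus $z$, and in the main case $k+l<z$ you compare the column sums $D_i$ with the canonical $q$-adic expansion of $(k+l)e$ supplied by Proposition~\ref{prop_coeffs_z} applied to $k+l$, deducing the carry from the strict digit-sum inequality $\sum_i D_i > s_q((k+l)e)$ together with $\sum_i D_i q^{i-1}=(k+l)e$ and uniqueness of base-$q$ expansions. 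Each approach has its merits: the paper's argument pinpoints the position at which the carry occurs, which is more constructive; your argument avoids the index-selection step altogether and, as a side benefit, handles cleanly (via the case $k+l>z$) the degenerate situation where all the sums $\overline{kq^j}+\overline{lq^j}$ are equal to a common value exceeding $z$ (e.g.\ $q\equiv 1\pmod{z}$ with $k+l>z$), a configuration the paper's converse, which assumes that not all these sums are equal, does not explicitly address. Your boundary cases are also sound: for $k+l=z$ the coprimality $\gcd(q,z)=1$ indeed forces every $s_j=z$, so both sides of the equivalence fail, and for $k+l>z$ the identity $\sum_i D_i q^{i-1}=(k+l)e>q^n-1$ forces a carry while $s_n=k+l>z$ makes the right-hand side hold.
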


\begin{proof}
Set $c_{k,i} = \overline{k q^{n-i}}$, for $0 \leq i \leq n$,
as in the proof of Proposition~\ref{prop_coeffs_z}.
By this lemma, we know that a carry occurs if and only if
\[
   \frac{(c_{k,i} + c_{l,i}) q - (c_{k,i-1} + c_{l.i-1})}{z} \geq q
\]
holds for some $i \in \{ 1, 2, \ldots, n \}$.

In this case we have $(c_{k,i} + c_{l,i}) q \geq z q$,
and some $c_{k,j} + c_{l,j}$ is different from $z$ because otherwise
all coefficients of $k e + l e$ would be equal to $q-1$,
contradicting the assumption that a carry occurs.

Conversely,
assume that not all $c_{k,i} + c_{l,i}$ are equal
and that $c_{k,i} + c_{l,i} \geq z$ holds for some $i$.
Then we can choose $i \in \{ 1, 2, \ldots, n \}$
such that $c_{k,i} + c_{l,i} \geq z$ and
$c_{k,i-1} + c_{l,i-1} < c_{k,i} + c_{l,i}$.
(Start with the largest index $i$ for which $c_{k,i} + c_{l,i}$ is maximal,
and decrease $i$ until $c_{k,i-1} + c_{l,i-1} < c_{k,i} + c_{l,i}$ holds.
Since $c_{k,0} = c_{k,n}$ and $c_{l,0} = c_{l,n}$,
there is a positive index $i$ with the required property.)
Thus
\[
   (c_{k,i} + c_{l,i}) q - (c_{k,i-1} + c_{l.i-1})
   > (c_{k,i} + c_{l,i}) (q - 1)
   \geq z (q - 1)
\]
holds.
The left hand side is divisible by $z$, hence it is at least $z q$,
which means that there is a carry at $i$.
\end{proof}

\begin{lemma}\label{lem_same_cyclic_subgroup_of_residues}
Let $z$ be a positive integer and let $q$ and $Q$ be two prime residues
modulo $z$ that generate the same subgroup of order $n$, say,
in the group of prime residues modulo $z$.
Then $A[q,n,z]$ and $A[Q,n,z]$ are isomorphic.

In particular,
if $q \equiv Q \pmod{z}$ then $A[q,n,z] \cong A[Q,n,z]$.
\end{lemma}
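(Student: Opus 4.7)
The plan is to show that the linear map $\phi\colon A[q,n,z] \to A[Q,n,z]$ defined on standard basis vectors by $\phi(b_k^{(q)}) = b_k^{(Q)}$ for $0 \leq k \leq z$ is an algebra isomorphism. Both algebras have dimension $z+1$, and both are monomial in the sense that the product of two basis elements $b_k \cdot b_l$ equals $b_{k+l}$ if $k+l \leq z$ and no carry occurs in the $q$-adic (respectively $Q$-adic) addition of the exponent vectors of $b_k$ and $b_l$, and equals $0$ otherwise. Thus the proof reduces to showing that the "no carry" condition for a pair $(k,l)$ is the same in $A[q,n,z]$ as in $A[Q,n,z]$.

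By Lemma~\ref{lem_carry_in_terms_of_z}, "no carry" in $A[q,n,z]$ depends only on the multiset
\[
   M_q(k,l) := \{ \overline{k q^i} + \overline{l q^i} : 1 \leq i \leq n \},
\]
since it holds precisely when every element of $M_q(k,l)$ is less than $z$, or every element equals $z$. So it suffices to prove the multiset identity $M_q(k,l) = M_Q(k,l)$ for all $k, l \in \{0, 1, \ldots, z\}$.

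If $q \equiv Q \pmod{z}$ (the "in particular" case), then $\overline{k q^i} = \overline{k Q^i}$ for every $k$ and $i$, so the tuples themselves coincide and the claim is immediate. In the general case, since $q + z\Z$ and $Q + z\Z$ both generate the same cyclic subgroup of order $n$ in $(\Z/z\Z)^{\times}$, we may write $Q \equiv q^t \pmod{z}$ for some $t$ coprime to $n$. Then $i \mapsto ti \pmod{n}$ is a permutation $\pi$ of $\{1, \ldots, n\}$, and the tuple $(\overline{k Q^i})_{i=1}^n = (\overline{k q^{ti}})_{i=1}^n$ is obtained from $(\overline{k q^i})_{i=1}^n$ by applying $\pi$ to the indices. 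Applying the same permutation simultaneously to the tuple corresponding to $l$ yields $M_Q(k,l) = M_q(k,l)$ as multisets, so the carry conditions agree.

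The only technical subtlety is that the case $k+l > z$ must also be correctly handled; but here a carry must occur since $(k+l)e$ exceeds $q^n - 1 = ze$, forcing any $q$-adic expansion of the sum to have length greater than $n$, and the same holds for $Q$. Beyond this, the argument is a direct consequence of Lemma~\ref{lem_carry_in_terms_of_z} together with the permutation of indices induced by $i \mapsto ti \pmod{n}$, so no serious obstacle is anticipated.
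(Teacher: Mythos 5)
Your proof is correct and follows essentially the same route as the paper: both reduce the isomorphism to showing that the basis-vector multiplication tables agree, invoke Lemma~\ref{lem_carry_in_terms_of_z} to express the carry condition purely in terms of the residues $\overline{kq^i}+\overline{lq^i}$, and use the index permutation coming from $Q \equiv q^t \pmod{z}$ (applied simultaneously to $k$ and $l$) to transfer the condition from $q$ to $Q$. Your explicit construction of the permutation as $i \mapsto ti \bmod n$ and the remark on the case $k+l>z$ are fine refinements of the paper's argument, not a different method.
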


\begin{proof}
We want to show that the two algebras have the same multiplication table
with respect to their natural bases.
We know that
\[
   \left\{ \overline{k q^i}; 0 \leq i \leq n-1 \right\} =
   \left\{ \overline{k Q^i}; 0 \leq i \leq n-1 \right\}
\]
holds for $1 \leq k < z$,
thus there is a permutation $\pi$ of $\{ 0, 1, \ldots, n-1 \}$
such that $q^t \equiv Q^{\pi(t)} \pmod{z}$ for $t = 0, \ldots, n-1$.
Hence the statement follows from Lemma~\ref{lem_carry_in_terms_of_z}:
The product of $b_k$ and $b_l$ in $A[q,n,z]$ is zero if and only if
there is an $i \in \{ 1, 2, \ldots, n \}$ such that
$\overline{k q^i} + \overline{l q^i} \geq z$ holds,
and that not all $\overline{k q^i} + \overline{l q^i}$ are equal to $z$.
Since
$\overline{k q^i} + \overline{l q^i} =
 \overline{k Q^{\pi(i)}} + \overline{l Q^{\pi(i)}}$,
this condition is satisfied if and only if it is satisfied for
$Q$ instead of $q$, and this holds if and only if the product of
$b_k$ and $b_l$ in $A[Q,n,z]$ is zero.
\end{proof}

\begin{rem}
In the situation of Lemma~\ref{lem_same_cyclic_subgroup_of_residues},
Proposition~\ref{prop_coeffs_z} yields that
the upper bound from
\cite[Theorem~\refPartI{thm_upper_and_lower_bounds_I}~(i)]{BHHK1}
for the Loewy length is the same for $A[q,n,z]$ and $A[Q,n,z]$.
\end{rem}

\begin{example}\label{example_small_z}
\begin{enumerate}
\item
    For $z \in \{ 2, 3, 4, 6 \}$,
    only the cases $q \equiv \pm 1 \pmod{z}$ occur,
    which were handled in Example~\ref{example_congruence_q_mod_z_pm_1}.
\item
    Consider $z = 5$.
    The cases $q \equiv \pm 1 \pmod{z}$ are known,
    they yield the algebras $A[6,1,5] = A(6,1,1)$ and $A[4,2,5] = A(4,2,3)$.
    Lemma~\ref{lem_same_cyclic_subgroup_of_residues} tells us
    that we have the isomorphisms
    \[
      \begin{array}{lclclccc}
       A[6,1,5] & \cong & A[11,1,5] & \cong & A[16,1,5] & \cong & \cdots & \cong \\
       A(6,1,1) & \cong & A(11,1,2) & \cong & A(16,1,3) & \cong & \cdots & \ 
      \end{array}
    \]
    and
    \[
      \begin{array}{lclclccc}
       A[4,2,5] & \cong & A[9,2,5] & \cong & A[14,2,5] & \cong & \cdots & \cong \\
       A(4,2,3) & \cong & A(9,2,16) & \cong & A(14,2,65) & \cong & \cdots & \ 
      \end{array}
    \]
    and also that the only other cases are
    \[
      \begin{array}{lclclccc}
       A[2,4,5] & \cong & A[7,4,5] & \cong & A[12,4,5] & \cong & \cdots & \cong \\
       A(2,4,3) & \cong & A(7,4,480) & \cong & A(12,4,4147) & \cong & \cdots & \ 
      \end{array}
    \]
    and
    \[
      \begin{array}{lclclccc}
       A[3,4,5] & \cong & A[8,4,5] & \cong & A[13,4,5] & \cong & \cdots & \cong \\
       A(3,4,16) & \cong & A(8,4,819) & \cong & A(13,4,5712) & \cong & \cdots & \ 
      \end{array}
    \]
    The latter two algebras, $A[2,4,5]$ and $A[3,4,5]$,
    are isomorphic by Lemma~\ref{lem_same_cyclic_subgroup_of_residues},
    and have Loewy length $3$.
    Note that $s_q(ke) = \frac{q-1}{z} \sum_{i=1}^4 \overline{k q^i}$,
    and $\{ \overline{k q^i}; 1 \leq i \leq 4 \} = \{ 1, 2, 3, 4 \}$
    is the set of all prime residues modulo $z$,
    for any $k \in \{ 1, 2, 3, 4 \}$;
    thus $s_q(ke) = 2 (q-1)$.
\item
    Consider $z = 7$.
    The cases $q \equiv \pm 1 \pmod{z}$ are known.
    The same arguments as in the case $z = 5$ show that
    $q \in \{ 3, 5 \}$ yields two isomorphic algebras $A[q,n,7]$
    of Loewy length $3$,
    because the sum of all prime residues modulo $z$ appears in
    the formula for $s_q(ke)$.

    The remaining cases are $q \equiv 2 \pmod{z}$ and $q \equiv 4 \pmod{z}$.
    Here the situation is different.
    We choose $n = \ord_z(q) = 3$ and compute $c_{k,i}$ and $a_{k,i}$;
    the rows in the following tables are indexed by $k$
    and the columns by $i$.

    (We know that it is sufficient to consider $q \in \{ 2, 4 \}$,
    and that the two values yield isomorphic algebras,
    but here we show the general case.)

    \[
       q \equiv 2 \pmod{z}:
       \ \ 
       \begin{array}{c|rrrr}
           c_{k,i} & 0 & 1 & 2 & 3 \\ \hline
                 1 & 1 & 4 & 2 & 1 \\
                 2 & 2 & 1 & 4 & 2 \\
                 3 & 3 & 5 & 6 & 3 \\
                 4 & 4 & 2 & 1 & 4 \\
                 5 & 5 & 6 & 3 & 5 \\
                 6 & 6 & 3 & 5 & 6
       \end{array}
       \ \ \ \ 
       \begin{array}{c|rrr}
           z a_{k,i} & 1 & 2 & 3 \\ \hline
                   1 & 4q-1 & 2q-4 &  q-2 \\
                   2 &  q-2 & 4q-1 & 2q-1 \\
                   3 & 5q-3 & 6q-5 & 3q-6 \\
                   4 & 2q-4 &  q-2 & 4q-1 \\
                   5 & 6q-5 & 3q-6 & 5q-3 \\
                   6 & 3q-6 & 5q-3 & 6q-5
       \end{array}
    \]
    \[
       q \equiv 4 \pmod{z}:
       \ \ 
       \begin{array}{c|rrrr}
           c_{k,i} & 0 & 1 & 2 & 3 \\ \hline
                 1 & 1 & 2 & 4 & 1 \\
                 2 & 2 & 4 & 1 & 2 \\
                 3 & 3 & 6 & 5 & 3 \\
                 4 & 4 & 1 & 2 & 4 \\
                 5 & 5 & 3 & 6 & 5 \\
                 6 & 6 & 5 & 3 & 6 \\
       \end{array}
       \ \ 
       \begin{array}{c|rrr}
           z a_{k,i} & 1 & 2 & 3 \\ \hline
                   1 & 2q-1 & 4q-1 &  q-4 \\
                   2 & 4q-2 &  q-4 & 2q-1 \\
                   3 & 6q-3 & 5q-6 & 3q-5 \\
                   4 &  q-4 & 2q-1 & 4q-2 \\
                   5 & 3q-5 & 6q-3 & 5q-6 \\
                   6 & 5q-6 & 3q-5 & 6q-3 \\
       \end{array}
    \]

    We see that the following holds in both cases:

    The exponent sums of $b_1, b_2, b_4$ are $q-1$,
    and the exponent sums of $b_3, b_5, b_6$ are $2(q-1)$.
    We have $b_3 = b_1 b_2$, $b_5 = b_1 b_4$, and $b_6 = b_2 b_4$.
    Since $m[q,3,z] = q-1$, the upper bound for the Loewy length
    of $A[q,3,z]$ is $4$, and thus $\LL(A[q,3,z]) = 4$ holds.
\end{enumerate}
\end{example}

We can generalize an observation from Example~\ref{example_small_z}.

\begin{prop}\label{prop_ll_for_maximal_order_of_q_modulo_prime_power}
Let $z$ be an odd prime power.
If $\ord_z(q) = \varphi(z)$ then we have 
$$\LL( A[q,\varphi(z),z] ) = 3.$$
\end{prop}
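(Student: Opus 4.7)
The plan is to reduce the statement to a direct application of Remark~\ref{rem_calculating_expansions}(iii) for the upper bound, and then to exhibit a single nonzero product in $\J(A)^2$ for the matching lower bound.

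First I would set $n := \varphi(z)$ and $e := (q^n-1)/z$, so that $A = A[q,n,z] = A(q,n,e)$. Writing $z = p^k$ with $p$ an odd prime, the value $\varphi(z) = p^{k-1}(p-1)$ is even, and the cyclic group $(\Z/z\Z)^\times$ of order $\varphi(z)$ contains a unique element of order $2$, namely $-1+z\Z$. The hypothesis $\ord_z(q) = \varphi(z)$ forces $q+z\Z$ to generate all of $(\Z/z\Z)^\times$, and in particular $-1+z\Z \in \langle q+z\Z \rangle$. Remark~\ref{rem_calculating_expansions}(iii) then applies directly and yields
\[
   s_q(ke) = n \frac{q-1}{2} \qquad \text{for every } k \in \{1,\ldots,z-1\},
\]
so that $m := m(q,e) = n(q-1)/2$. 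Combined with the upper bound from \cite[Theorem~\refPartI{thm_upper_and_lower_bounds_I}]{BHHK1}, this immediately gives
\[
   \LL(A) \leq \left\lfloor \frac{n(q-1)}{m} \right\rfloor + 1 = 3.
\]

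For the matching lower bound $\LL(A) \geq 3$, it suffices to exhibit a nonzero element of $\J(A)^2$. Pick any $i \in \{1,\ldots,z-1\}$ with $q$-adic expansion $ie = \sum_{t=1}^n i_t q^{t-1}$, so that $b_i = x_1^{i_1}\cdots x_n^{i_n}$. Since $ie + (z-i)e = q^n - 1$ has every $q$-adic digit equal to $q-1$, no carries occur in the addition, which forces the $q$-adic expansion of $(z-i)e$ to be $\sum_{t=1}^n (q-1-i_t) q^{t-1}$, and hence $b_{z-i} = x_1^{q-1-i_1}\cdots x_n^{q-1-i_n}$. Thus $b_i \cdot b_{z-i} = x_1^{q-1}\cdots x_n^{q-1} = b_z \neq 0$, showing $b_z \in \J(A)^2$ and completing the proof.

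The only genuine check is the verification that the setup satisfies the hypothesis of Remark~\ref{rem_calculating_expansions}(iii); once that is in place, both bounds fall out with essentially no computation, so I do not expect any substantial obstacle.
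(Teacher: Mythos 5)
Your proof is correct and follows the paper's route: the paper's proof is literally ``Apply Remark~\ref{rem_calculating_expansions}~(iii)'', which gives $m(q,e)=n\frac{q-1}{2}$ and hence the upper bound $3$, with the lower bound left implicit. You have simply filled in the details, in particular the observation that $-1+z\Z\in\langle q+z\Z\rangle$ because $q$ generates the cyclic group $(\Z/z\Z)^\times$ of even order, and the explicit nonzero product $b_i b_{z-i}=b_z$ witnessing $\J(A)^2\neq 0$.
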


\begin{proof}
Apply Remark~\ref{rem_calculating_expansions}~(iii).
\end{proof}

\begin{prop}\label{lem_large_order_of_q_modulo_prime}
Let $z$ be an odd prime.
If $\ord_z(q) = (z-1) / 2$ then
\[
    \LL( A[q,(z-1)/2,z] ) = \left\{ \begin{array}{r@{\,,\quad}l}
             4 & \textrm{if $z \in \{ 3, 7 \}$} \\
             3 & \textrm{otherwise}
           \end{array} \right.
\]
\end{prop}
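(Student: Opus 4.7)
The plan is to prove $\LL = 3$ for $z \geq 11$ using the upper bound from \cite[Theorem~\refPartI{thm_upper_and_lower_bounds_I}]{BHHK1}, and to handle the two exceptional cases $z \in \{3,7\}$ directly. For $z = 3$, the hypothesis $\ord_z(q) = 1$ forces $q \equiv 1 \pmod 3$, so by Example~\ref{example_congruence_q_mod_z_pm_1}(1) the algebra $A[q,1,3]$ is uniserial of dimension $z+1 = 4$, giving $\LL = 4$; for $z = 7$, Example~\ref{example_small_z}(3) has already computed $\LL(A[q,3,7]) = 4$.

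For $z \geq 11$ an odd prime, the lower bound $\LL \geq 3$ follows from the identity $b_k \cdot b_{z-k} = b_z$: using Proposition~\ref{prop_coeffs_z} together with $\overline{(z-k)q^i} = z - \overline{kq^i}$, one checks that the $q$-adic digits of $ke$ and $(z-k)e$ are componentwise complementary and sum to $q-1$ in each position, so no carry occurs and the product is the top element $b_z \neq 0$. For the upper bound $\LL \leq 3$ it suffices, by \cite[Theorem~\refPartI{thm_upper_and_lower_bounds_I}]{BHHK1}, to verify that $m(q,e) > n(q-1)/3$, and I would split this into two subcases according to $z \pmod 4$.

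If $z \equiv 1 \pmod 4$, then $-1$ is a quadratic residue modulo $z$ and hence lies in the index-$2$ subgroup $H = \langle q + z\Z \rangle$ of $(\Z/z\Z)^\times$, so Remark~\ref{rem_calculating_expansions}(iii) gives $m(q,e) = n(q-1)/2$ immediately. If $z \equiv 3 \pmod 4$, then $-1 \notin H$, and Proposition~\ref{prop_large_order_of_q_modulo_prime} produces exactly two values $d_+ < d_-$ for $s_q(ke)$; writing $d_\pm = (q-1)s_\pm$, the relation $G_- = \{z - a : a \in G_+\}$ yields $s_+ + s_- = n$, while Dirichlet's class number formula gives $s_- - s_+ = h(-z)$, the class number of $\Q(\sqrt{-z})$. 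Hence $m(q,e) = d_+ = (q-1)(n - h(-z))/2$, and the condition $m > n(q-1)/3$ reduces to the arithmetic inequality $h(-z) < (z-1)/6$.

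The main obstacle is therefore this last inequality for primes $z \equiv 3 \pmod 4$ with $z \geq 11$. I would establish it by combining the standard estimate $h(-z) = O(\sqrt{z}\log z)$, obtained via Minkowski's theorem together with Dirichlet's class number formula, for large $z$ with direct verification of the finitely many small cases. In particular, the first case $z = 11$ is handled by the elementary computation that the quadratic residues $\{1,3,4,5,9\}$ modulo $11$ sum to $22$, so $s_+ = 2 > 5/3 = n/3$, or equivalently $h(-11) = 1 < 10/6$.
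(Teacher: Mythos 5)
Your proof is correct and follows essentially the same route as the paper: the case $z \equiv 1 \pmod 4$ via Remark~\ref{rem_calculating_expansions}~(iii), and the case $z \equiv 3 \pmod 4$ via the two-value statement of Proposition~\ref{prop_large_order_of_q_modulo_prime} together with Dirichlet's class number formula, reducing everything to comparing $h(-z)$ with $(z-1)/6$. The only real difference is that you go through the upper bound $m(q,e) > n(q-1)/3$, which requires the strict inequality $h(-z) < (z-1)/6$, whereas the paper only needs $h(-z) \neq (z-1)/6$; both follow once your $O(\sqrt{z}\log z)$ estimate is made explicit (the paper uses Louboutin's bound $L(1,\chi) \leq 1 + \log\sqrt{z}$, which settles $z > 27$ and leaves only $z \in \{11,19,23\}$ to check by hand).
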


\begin{proof}
In the case $z \equiv 1 \pmod{4}$,
Remark~\ref{rem_calculating_expansions}~(iii) yields
$\LL( A[q,(z-1)/2,z] ) = 3$,
so assume $z \equiv -1 \pmod{4}$.
By Proposition~\ref{prop_large_order_of_q_modulo_prime},
exactly two different values occur for $s_q(ke)$, $1 \leq k \leq z-1$.
Thus $\LL( A[q,(z-1)/2,z] ) \in \{ 3, 4 \}$ holds,
and if the value is $4$ then $s_q(k e) = 2 s_q(e)$ must hold
for quadratic nonresidues $k$ modulo $z$,
which means that the sum $N$, say, of quadratic nonresidues modulo $z$
is twice as large as the sum $Q$, say, of quadratic residues modulo $z$.
This holds for $z \in \{ 3, 7 \}$,
and indeed we have $\LL( A[4,1,3] ) = \LL( A[2,3,7] ) = 4$.

The class number formula \cite[Chap.~6, equ.~(19)]{Davenport} (which has been
used in the proof of Proposition~\ref{prop_large_order_of_q_modulo_prime})
states that the ideal class number $h(-z)$ of the imaginary quadratic field
$\Q(\sqrt{-z})$ equals $(N-Q)/z$.
Since $N + Q = z(z-1)/2$ holds,
the condition $N = 2Q$ implies $h(-z) = (z-1)/6$.
hence it suffices to show that this equality cannot hold for primes $z$
with $z \equiv -1 \pmod{4}$ and $z > 7$.

For that, note that $h(-z) = \sqrt{z} L(1, \chi) / \pi$
(see \cite[Chap.~6, equ.~(15)]{Davenport}) for $z > 3$,
where $\chi$ is a primitive Dirichlet character modulo $z$ and $L(1, \chi)$
is the value of the Dirichlet $L$ function for $\chi$ at $1$.
Now \cite[Theorem~A]{Louboutin} (with $N = 0$)
states that $L(1, \chi) \leq 1 + \log(\sqrt{z})$,
which implies $h(-z) \leq \sqrt{z} ( 1 + \log(\sqrt{z}) ) / \pi$.
It is easy to check that $\sqrt{z} ( 1 + \log(\sqrt{z}) ) / \pi < (z-1)/6$
holds for $z > 27$,
and that $h(-z) \neq (z-1)/6$ for the relevant $z \in \{ 8, \ldots, 27 \}$.
\end{proof}

\begin{rem}
In the situation of
Proposition~\ref{prop_ll_for_maximal_order_of_q_modulo_prime_power},
the upper bound from
\cite[Theorem~\refPartI{thm_upper_and_lower_bounds_I}~(i)]{BHHK1}
is
\[
   \left\lfloor \frac{n(q-1)}{m[q,n,z]} \right\rfloor + 1 =
   \left\lfloor \frac{\varphi(z)(q-1)}{\varphi(z)(q-1)/2} \right\rfloor + 1 =
   3,
\]
hence it is equal to the Loewy length.
In the situation of Proposition~\ref{lem_large_order_of_q_modulo_prime},
we have
$m[q,n,z] = (z-1)(q-1)/4 = n(q-1)/2$ in the case $z \equiv 1 \pmod{4}$,
$m[q,n,z] = n(q-1)/3$ in the cases $z \in \{ 3, 7 \}$;
in the remaining cases, we have $m[q,n,z] = (q-1) Q / z$,
by the proof of Proposition~\ref{prop_large_order_of_q_modulo_prime},
and $N < 2Q$ implies $Q > z(z-1)/6 = z n/3$ and thus
$m[q,n,z] > n(q-1)/3$.
Together with the obvious inequality $m[q,n,z] \leq n(q-1)/2$,
we get that the upper bound is attained in each case.
\end{rem}

We can compute, for fixed $z$, the Loewy length of all $A[q,n,z]$.
The smallest value of $z$ for which this Loewy length differs
from the upper bound from
\cite[Theorem~\refPartI{thm_upper_and_lower_bounds_I}~(i)]{BHHK1}
is $z = 70$.

\begin{example}\label{expl_z_eq_70}
Let $n = 12$, $q = 3$, and $z = 70$; then $e = 7\,592$.
The exponent vectors of $b_1, b_2, \ldots, b_{z-1}$,
up to cyclic shifts, are as follows.
We list the value $k$ for which the shown vector belongs to $ke$,
the vector itself, the length of its orbit under cyclic shifts,
and $s_q(ke)$.
\[
\begin{array}{r|c|r|r}
     1 & [ 2, 1, 0, 2, 0, 1, 1, 0, 1, 0, 0, 0 ] & 12 &  8 \\
     2 & [ 1, 0, 1, 1, 1, 2, 2, 0, 2, 0, 0, 0 ] & 12 & 10 \\
     5 & [ 1, 2, 2, 1, 0, 0, 1, 2, 2, 1, 0, 0 ] &  6 & 12 \\
     7 & [ 2, 2, 0, 0, 2, 2, 0, 0, 2, 2, 0, 0 ] &  4 & 12 \\
    10 & [ 2, 1, 2, 0, 1, 0, 2, 1, 2, 0, 1, 0 ] &  6 & 12 \\
    14 & [ 1, 2, 1, 0, 1, 2, 1, 0, 1, 2, 1, 0 ] &  4 & 12 \\
    35 & [ 1, 1, 1, 1, 1, 1, 1, 1, 1, 1, 1, 1 ] &  1 & 12 \\
    68 & [ 1, 2, 1, 1, 1, 0, 0, 2, 0, 2, 2, 2 ] & 12 & 14 \\
    69 & [ 0, 1, 2, 0, 2, 1, 1, 2, 1, 2, 2, 2 ] & 12 & 16 \\
\end{array}
\]
Since $m[q,n,z] = 8$,
the upper bound for $\LL( A[q,n,z] )$ is $\frac{n(q-1)}{m[q,n,z]} + 1 = 4$.
If this bound would be attained then
the above vector with coefficient sum $16$ would be the sum of two cyclic
shifts of the vector with coefficient sum $8$.
However, no such decomposition is possible, and thus
$\LL( A[q,n,z] ) = 3$.
\end{example}

The following example shows nonisomorphic algebras $A[q,n,z]$
which have the same Loewy vector.

\begin{example}\label{same_Loewy_vector_nonisom}
(i)
  The algebras of smallest dimension with this property
  are $A[3,4,40]$ and $A[19,2,40]$.
  They have the Loewy vector $(1, 10, 19, 10, 1)$,
  and they can be distinguished as follows.
  View $A[q,n,z]$ as an algebra over the ring of integers,
  and let $A[q,n,z]_p$ denote its reduction modulo $p$.
  Then $U[q,n,z] = \{ x^2; x \in \J( A[q,n,z]_2 ) \}$ is a subspace
  of $A[q,n,z]_2$.
  We have $\dim( U[3,4,40] ) = 7$ and $\dim( U[19,2,40] ) = 11$.

(ii)
  The algebras $A[29,6,117]$ and $A[35,6,117]$ have
  the Loewy vector $(1,104,12,1)$,
  and the vector spaces $U[q,n,z]$ defined above have dimension $3$.

  We compute the cardinality of
  $\{ (x,y) \in A[q,n,z]_2; x y \in U[q,n,z] + \langle b_z \rangle \}$
  for the two parameter sets and get
  $2^{221} \cdot 119$ and $2^{216} \cdot 1069$, respectively.
\end{example}

Checking more examples, we get the following.

\begin{rem}\label{ref_database_examples}
Using {\GAP}~\cite{GAP},
we have computed the Loewy vectors of all $A[q,n,z]$ with
$1 \leq z \leq 10\,000$,
where $q$ runs over representatives of cyclic subgroups of the
group of prime residues modulo $z$ and $n = \ord_z(q)$.
\begin{itemize}
\item
    There are $768\,512$ such parameter pairs $(q,z)$,
    the number of pairwise different Loewy vectors is $475\,581$.
\item
    $\LL( A[q,n,z] ) = 3$ occurs in $191\,608$ cases,
    and Loewy vectors of the form $(1, k, 1, \ldots, 1)$
    and of length larger than $3$ occur in $37\,400$ cases;
    here the isomorphism type of $A[q,n,z]$ is determined by the
    dimension,
    by \cite[Proposition~\refPartI{isomorphism_by_Loewy_vector}]{BHHK1}.
\item
    Moreover, it happens in many cases that mapping corresponding basis
    vectors $b_i$ of two algebras with equal Loewy vector to each other
    defines an isomorphism.
    Checking for this special kind of isomorphism reduces the
    possible number of isomorphism types to $484\,234$.
    At this stage,
    we know that at most $7\,042$ Loewy vectors can belong to
    more than one isomorphism type.
\item
    Nonisomorphic algebras $A[q,n,z]$ with the same Loewy vector can occur,
    see Example~\ref{same_Loewy_vector_nonisom}.
\item
    Using the dimensions of the subspaces
    $V_{p,k} = \{ x \in \J(A[q,n,z]_p); x^{p^k} = 0 \}$ in the reduction
    modulo $p$ as invariants, we can show that several of the remaining
    candidates are nonisomorphic.
    At this stage,
    we know that the number of possible isomorphism types is at least
    $477\,912$ and at most $484\,234$.

    Using the dimensions of the ideals $V_{p,k} A[q,n,z]_p$,
    $J^i + S_j$, and $J^i S_j$,
    where $J= \J(A[q,n,z])$ and $S_j$ is the $j$-th member of the
    socle series of $A[q,n,z]$,
    yields a few more proofs of nonisomorphism.
    At this stage,
    we know that the number of possible isomorphism types is at least
    $478\,145$.
\item
    The smallest algebras $A[q,n,z]$ for which we currently do not know
    whether they are isomorphic are $A[11,6,171]$ and $A[68,6,171]$,
    they have the Loewy vector $(1,125,45,1)$.
\end{itemize}

The upper bound from
\cite[Theorem~\refPartI{thm_upper_and_lower_bounds_I}]{BHHK1}
is not attained for $10\,721$ parameter pairs;
some properties of these cases are listed below.
\begin{itemize}
\item
    The only examples of dimension up to $100$ are
    $A[3,12,70]$, $A[5,12,91]$, and $A[8,12,95]$.
\item
    The unique example for $z \leq 10\,000$
    where 
    $\LL( A[q,n,z] )$ is strictly smaller than
    $\lfloor \frac{n(q-1)}{m[q,n,z]} \rfloor$
    is $A[9,15,5\,551]$,
    where we have $m[q,n,z] = 24$ and $\LL( A[q,n,z] ) = 4$.
\item
    The unique example for $z \leq 10\,000$
    where the upper bound is not attained
    and $e = (q^n-1)/z$ is a prime power
    is $A[3,43,862]$,
    where $e = 380\,808\,546\,861\,411\,923$ is actually a prime.
    Note that $e$ divides $(q^n-1)/(q-1)$.
\item
    The example of smallest dimension
    with Loewy length at least $4$ is $A[7,12,195]$.
\item
    The smallest value of $n$ is $5$,
    it occurs in $13$ cases,
    the one of smallest dimension is $A[223,5,1\,353]$.
\item
    The smallest value of $e$ is $275$,
    it occurs exactly for $A[4,10,3\,813]$.
    Note that $e$ divides $(q^n-1)/(q-1)$.
\end{itemize}
Of course the chosen enumeration may be misleading,
since it is based on selecting certain parameter pairs.
However, this way we can get at least some measure how good the upper bound
from \cite[Theorem~\refPartI{thm_upper_and_lower_bounds_I}~(i)]{BHHK1} is.
\end{rem}


\begin{table}
\caption{$m(q,e)$ for $e \leq 30$}
\label{table_meq_small}
\begin{sideways}
\begin{minipage}{19.5cm}
\scriptsize
\[
\begin{array}{r|rrrrrrrrrrrrrrrrrrrrrrrrrrrrr|r}
 & 2 & 3 & 4 & 5 & 6 & 7 & 8 & 9 & 10 & 11 & 12 & 13 & 14 & 15 & 16 & 17 & 18 & 19 & 20 & 21 & 22 & 23 & 24 & 25 & 26 & 27 & 28 & 29 & 30 & \\
\hline
    1 & 2 & 3 & 4 & 5 & 6 & 7 & 8 & 9 & 10 & 11 & 12 & 13 & 14 & 15 & 16 & 17 & 18 & 19 & 20 & 21 & 22 & 23 & 24 & 25 & 26 & 27 & 28 & 29 & 30 & 1 \\
    2 &  & 2 &  & 2 &  & 3 &  & 2 &  & 2 &  & 2 &  & 4 &  & 2 &  & 2 &  & 3 &  & 3 &  & 2 &  & 2 &  & 2 &  & 2 \\
    3 &  &  & 2 & 2 &  & 2 & 4 &  & 2 & 3 &  & 3 & 2 &  & 4 & 2 &  & 2 & 4 &  & 4 & 3 &  & 2 & 6 &  & 2 & 2 &  & 3 \\
    4 &  &  &  & 2 &  & 3 &  & 3 &  & 3 &  & 2 &  & 6 &  & 2 &  & 3 &  & 3 &  & 3 &  & 2 &  & 3 &  & 2 &  & 4 \\
    5 &  &  &  &  & 2 & 2 & 4 & 2 &  & 3 & 4 & 2 & 2 &  & 4 & 2 & 2 & 3 &  & 2 & 4 & 2 & 8 &  & 2 & 2 & 4 & 2 &  & 5 \\
    6 &  &  &  &  &  & 2 &  &  &  & 2 &  & 2 &  &  &  & 2 &  & 3 &  &  &  & 3 &  & 5 &  &  &  & 2 &  & 6 \\
    7 &  &  &  &  &  &  & 2 & 3 & 2 & 2 & 6 & 2 &  & 3 & 4 & 2 & 6 & 3 & 4 &  & 2 & 2 & 6 & 2 & 2 & 3 &  & 4 & 6 & 7 \\
    8 &  &  &  &  &  &  &  & 2 &  & 2 &  & 2 &  & 4 &  & 2 &  & 2 &  & 7 &  & 3 &  & 2 &  & 2 &  & 2 &  & 8 \\
    9 &  &  &  &  &  &  &  &  & 2 & 3 &  & 3 & 4 &  & 8 & 2 &  & 3 & 4 &  & 4 & 3 &  & 2 & 6 &  & 4 & 2 &  & 9 \\
    10 &  &  &  &  &  &  &  &  &  & 2 &  & 2 &  &  &  & 2 &  & 2 &  & 3 &  & 2 &  &  &  & 9 &  & 2 &  & 10 \\
    11 &  &  &  &  &  &  &  &  &  &  & 2 & 2 & 4 & 5 & 4 & 2 & 2 & 3 & 10 & 3 &  & 2 & 4 & 5 & 2 & 2 & 4 & 2 & 10 & 11 \\
    12 &  &  &  &  &  &  &  &  &  &  &  & 2 &  &  &  & 2 &  & 2 &  &  &  & 3 &  & 2 &  &  &  & 2 &  & 12 \\
    13 &  &  &  &  &  &  &  &  &  &  &  &  & 2 & 3 & 4 & 2 & 6 & 2 & 4 & 6 & 2 & 3 & 12 & 2 &  & 3 & 4 & 2 & 6 & 13 \\
    14 &  &  &  &  &  &  &  &  &  &  &  &  &  & 2 &  & 2 &  & 2 &  &  &  & 2 &  & 2 &  & 2 &  & 2 &  & 14 \\
    15 &  &  &  &  &  &  &  &  &  &  &  &  &  &  & 2 & 2 &  & 2 &  &  & 4 & 2 &  &  & 2 &  & 14 & 2 &  & 15 \\
    16 &  &  &  &  &  &  &  &  &  &  &  &  &  &  &  & 2 &  & 3 &  & 3 &  & 3 &  & 5 &  & 3 &  & 4 &  & 16 \\
    17 &  &  &  &  &  &  &  &  &  &  &  &  &  &  &  &  & 2 & 3 & 4 & 2 & 2 & 2 & 8 & 2 & 2 & 2 & 4 & 2 & 4 & 17 \\
    18 &  &  &  &  &  &  &  &  &  &  &  &  &  &  &  &  &  & 2 &  &  &  & 3 &  & 2 &  &  &  & 2 &  & 18 \\
    19 &  &  &  &  &  &  &  &  &  &  &  &  &  &  &  &  &  &  & 2 & 3 & 2 & 2 & 6 & 2 & 2 & 9 & 2 & 2 & 6 & 19 \\
    20 &  &  &  &  &  &  &  &  &  &  &  &  &  &  &  &  &  &  &  & 2 &  & 2 &  &  &  & 2 &  & 4 &  & 20 \\
    21 &  &  &  &  &  &  &  &  &  &  &  &  &  &  &  &  &  &  &  &  & 2 & 2 &  & 5 & 2 &  &  & 2 &  & 21 \\
    22 &  &  &  &  &  &  &  &  &  &  &  &  &  &  &  &  &  &  &  &  &  & 2 &  & 2 &  & 3 &  & 2 &  & 22 \\
    23 &  &  &  &  &  &  &  &  &  &  &  &  &  &  &  &  &  &  &  &  &  &  & 2 & 2 & 2 & 2 & 4 & 4 & 4 & 23 \\
    24 &  &  &  &  &  &  &  &  &  &  &  &  &  &  &  &  &  &  &  &  &  &  &  & 2 &  &  &  & 4 &  & 24 \\
    25 &  &  &  &  &  &  &  &  &  &  &  &  &  &  &  &  &  &  &  &  &  &  &  &  & 2 & 3 & 4 & 4 &  & 25 \\
    26 &  &  &  &  &  &  &  &  &  &  &  &  &  &  &  &  &  &  &  &  &  &  &  &  &  & 2 &  & 2 &  & 26 \\
    27 &  &  &  &  &  &  &  &  &  &  &  &  &  &  &  &  &  &  &  &  &  &  &  &  &  &  & 2 & 2 &  & 27 \\
    28 &  &  &  &  &  &  &  &  &  &  &  &  &  &  &  &  &  &  &  &  &  &  &  &  &  &  &  & 2 &  & 28 \\
    29 &  &  &  &  &  &  &  &  &  &  &  &  &  &  &  &  &  &  &  &  &  &  &  &  &  &  &  &  & 2 & 29 \\
    30 &  &  &  &  &  &  &  &  &  &  &  &  &  &  &  &  &  &  &  &  &  &  &  &  &  &  &  &  &  & 30 \\
\end{array}
\]
\end{minipage}
\end{sideways}
\end{table}

\begin{table}
\caption{$m(q,e)$ for $31 \leq e \leq 60$}
\label{table_meq_middle}
\begin{minipage}{\textheight}
\scriptsize
$
\begin{array}{r|p{12cm}}
 e & \mbox{\rm $\mathbf{m}$ for sets of residues $\not= 1 \bmod e$} \\
\hline
31 &
\textbf{2}: \{ 3, 6, 11, 12, 13, 15, 17, 21, 22, 23, 24, 26, 27, 29, 30 \},
\textbf{3}: \{ 5, 7, 9, 10, 14, 18, 19, 20, 25, 28 \},
\textbf{5}: \{ 2, 4, 8, 16 \} \\
32 &
\textbf{2}: \{ 31 \},
\textbf{4}: \{ 3, 5, 7, 11, 13, 15, 19, 21, 23, 27, 29 \},
\textbf{8}: \{ 9, 25 \},
\textbf{16}: \{ 17 \} \\
33 &
\textbf{2}: \{ 2, 8, 17, 29, 32 \},
\textbf{3}: \{ 4, 5, 7, 13, 14, 16, 19, 20, 25, 26, 28, 31 \},
\textbf{6}: \{ 10 \},
\textbf{11}: \{ 23 \} \\
34 &
\textbf{2}: \{ 3, 5, 7, 9, 11, 13, 15, 19, 21, 23, 25, 27, 29, 31, 33 \} \\
35 &
\textbf{2}: \{ 19, 24, 34 \},
\textbf{3}: \{ 2, 3, 12, 17, 18, 23, 32, 33 \},
\textbf{4}: \{ 4, 9, 13, 27 \},
\textbf{5}: \{ 11, 16, 26, 31 \},
\textbf{7}: \{ 8, 22, 29 \},
\textbf{10}: \{ 6 \} \\
36 &
\textbf{2}: \{ 11, 23, 35 \},
\textbf{4}: \{ 5, 17, 29 \},
\textbf{6}: \{ 7, 31 \},
\textbf{12}: \{ 13, 25 \},
\textbf{18}: \{ 19 \} \\
37 &
\textbf{2}: \{ 2, 3, 4, 5, 6, 8, 11, 13, 14, 15, 17, 18, 19, 20, 21, 22, 23, 24, 25, 27, 28, 29, 30, 31, 32, 35, 36 \},
\textbf{3}: \{ 7, 9, 10, 12, 16, 26, 33, 34 \} \\
38 &
\textbf{2}: \{ 3, 13, 15, 21, 27, 29, 31, 33, 37 \},
\textbf{4}: \{ 5, 9, 17, 23, 25, 35 \},
\textbf{6}: \{ 7, 11 \} \\
39 &
\textbf{2}: \{ 17, 23, 38 \},
\textbf{3}: \{ 2, 4, 7, 10, 11, 16, 19, 20, 22, 28, 29, 32, 35, 37 \},
\textbf{4}: \{ 5, 8 \},
\textbf{6}: \{ 25, 31, 34 \},
\textbf{13}: \{ 14 \} \\
40 &
\textbf{2}: \{ 39 \},
\textbf{4}: \{ 3, 7, 13, 19, 23, 27, 37 \},
\textbf{8}: \{ 9, 17, 29, 33 \},
\textbf{10}: \{ 11, 31 \},
\textbf{20}: \{ 21 \} \\
41 &
\textbf{2}: \{ 2, 3, 4, 5, 6, 7, 8, 9, 11, 12, 13, 14, 15, 17, 19, 20, 21, 22, 23, 24, 25, 26, 27, 28, 29, 30, 31, 32, 33, 34, 35, 36, 38, 39, 40 \},
\textbf{5}: \{ 10, 16, 18, 37 \} \\
42 &
\textbf{2}: \{ 5, 17, 41 \},
\textbf{4}: \{ 11, 23 \},
\textbf{6}: \{ 13, 19, 25, 31, 37 \},
\textbf{14}: \{ 29 \} \\
43 &
\textbf{2}: \{ 2, 3, 5, 7, 8, 12, 18, 19, 20, 22, 26, 27, 28, 29, 30, 32, 33, 34, 37, 39, 42 \},
\textbf{3}: \{ 4, 6, 9, 10, 11, 13, 14, 15, 16, 17, 21, 23, 24, 25, 31, 35, 36, 38, 40, 41 \} \\
44 &
\textbf{2}: \{ 7, 19, 35, 39, 43 \},
\textbf{4}: \{ 3, 5, 9, 13, 15, 17, 21, 25, 27, 29, 31, 37, 41 \},
\textbf{22}: \{ 23 \} \\
45 &
\textbf{2}: \{ 14, 29, 44 \},
\textbf{3}: \{ 7, 13, 22, 43 \},
\textbf{4}: \{ 2, 8, 17, 23, 32, 38 \},
\textbf{5}: \{ 11, 41 \},
\textbf{6}: \{ 4, 34 \},
\textbf{9}: \{ 19, 28, 37 \},
\textbf{10}: \{ 26 \},
\textbf{15}: \{ 16, 31 \} \\
46 &
\textbf{2}: \{ 5, 7, 11, 15, 17, 19, 21, 33, 37, 43, 45 \},
\textbf{4}: \{ 3, 9, 13, 25, 27, 29, 31, 35, 39, 41 \} \\
47 &
\textbf{2}: \{ 5, 10, 11, 13, 15, 19, 20, 22, 23, 26, 29, 30, 31, 33, 35, 38, 39, 40, 41, 43, 44, 45, 46 \},
\textbf{3}: \{ 2, 3, 4, 6, 7, 8, 9, 12, 14, 16, 17, 18, 21, 24, 25, 27, 28, 32, 34, 36, 37, 42 \} \\
48 &
\textbf{2}: \{ 47 \},
\textbf{4}: \{ 11, 23, 35 \},
\textbf{6}: \{ 19, 31, 43 \},
\textbf{8}: \{ 5, 29, 41 \},
\textbf{12}: \{ 7, 13, 37 \},
\textbf{16}: \{ 17 \},
\textbf{24}: \{ 25 \} \\
49 &
\textbf{2}: \{ 3, 5, 6, 10, 12, 13, 17, 19, 20, 24, 26, 27, 31, 33, 34, 38, 40, 41, 45, 47, 48 \},
\textbf{3}: \{ 2, 4, 9, 11, 16, 18, 23, 25, 30, 32, 37, 39, 44, 46 \},
\textbf{7}: \{ 8, 15, 22, 29, 36, 43 \} \\
50 &
\textbf{2}: \{ 3, 7, 9, 13, 17, 19, 23, 27, 29, 33, 37, 39, 43, 47, 49 \},
\textbf{10}: \{ 11, 21, 31, 41 \} \\
51 &
\textbf{2}: \{ 50 \},
\textbf{3}: \{ 5, 7, 10, 11, 14, 19, 20, 22, 23, 25, 28, 29, 31, 37, 40, 41, 43, 44, 46, 49 \},
\textbf{4}: \{ 2, 8, 26, 32, 38, 47 \},
\textbf{6}: \{ 4, 13, 16 \},
\textbf{17}: \{ 35 \} \\
52 &
\textbf{2}: \{ 23, 43, 51 \},
\textbf{4}: \{ 5, 7, 11, 15, 17, 19, 21, 25, 31, 33, 37, 41, 45, 47, 49 \},
\textbf{6}: \{ 3, 35 \},
\textbf{8}: \{ 9, 29 \},
\textbf{26}: \{ 27 \} \\
53 &
\textbf{2}: \{ 2, 3, 4, 5, 6, 7, 8, 9, 11, 12, 14, 17, 18, 19, 20, 21, 22, 23, 25, 26, 27, 29, 30, 31, 32, 33, 34, 35, 37, 38, 39, 40, 41, 43, 45, 48, 50, 51, 52 \},
\textbf{3}: \{ 10, 13, 15, 16, 24, 28, 36, 42, 44, 46, 47, 49 \} \\
54 &
\textbf{2}: \{ 5, 11, 17, 23, 29, 35, 41, 47, 53 \},
\textbf{6}: \{ 7, 13, 25, 31, 43, 49 \},
\textbf{18}: \{ 19, 37 \} \\
55 &
\textbf{2}: \{ 19, 24, 29, 39, 54 \},
\textbf{3}: \{ 2, 3, 7, 8, 13, 17, 18, 27, 28, 37, 38, 42, 47, 48, 52, 53 \},
\textbf{4}: \{ 4, 9, 14, 32, 43, 49 \},
\textbf{5}: \{ 6, 16, 26, 31, 36, 41, 46, 51 \},
\textbf{10}: \{ 21 \},
\textbf{11}: \{ 12, 23, 34 \} \\
56 &
\textbf{2}: \{ 31, 47, 55 \},
\textbf{4}: \{ 3, 5, 11, 19, 23, 27, 37, 39, 45, 51, 53 \},
\textbf{8}: \{ 9, 13, 17, 25, 33, 41 \},
\textbf{14}: \{ 15, 43 \},
\textbf{28}: \{ 29 \} \\
57 &
\textbf{2}: \{ 2, 8, 14, 29, 32, 41, 50, 53, 56 \},
\textbf{3}: \{ 4, 5, 7, 10, 11, 13, 16, 17, 22, 23, 25, 26, 28, 31, 34, 35, 40, 43, 44, 46, 47, 49, 52, 55 \},
\textbf{6}: \{ 37 \},
\textbf{19}: \{ 20 \} \\
58 &
\textbf{2}: \{ 3, 5, 9, 11, 13, 15, 17, 19, 21, 27, 31, 33, 35, 37, 39, 41, 43, 47, 51, 55, 57 \},
\textbf{4}: \{ 7, 23, 25, 45, 49, 53 \} \\
59 &
\textbf{2}: \{ 2, 6, 8, 10, 11, 13, 14, 18, 23, 24, 30, 31, 32, 33, 34, 37, 38, 39, 40, 42, 43, 44, 47, 50, 52, 54, 55, 56, 58 \},
\textbf{3}: \{ 3, 4, 5, 7, 9, 12, 15, 16, 17, 19, 20, 21, 22, 25, 26, 27, 28, 29, 35, 36, 41, 45, 46, 48, 49, 51, 53, 57 \} \\
60 &
\textbf{2}: \{ 59 \},
\textbf{4}: \{ 17, 23, 29, 47, 53 \},
\textbf{6}: \{ 7, 19, 43 \},
\textbf{10}: \{ 11 \},
\textbf{12}: \{ 13, 37, 49 \},
\textbf{20}: \{ 41 \},
\textbf{30}: \{ 31 \} \\
\end{array}
$
\end{minipage}
\end{table}

\begin{table}
\caption{$m(q,e)$ for $61 \leq e \leq 100$}
\label{table_meq_large}
\begin{minipage}{20cm}
\scriptsize
$
\begin{array}{r|p{12cm}}
 e & \mbox{\rm $\mathbf{m}$ for smallest generators of residues $\not= 1 \pmod e$} \\
\hline
61 &
\textbf{2}: \{ 2, 3, 4, 8, 11, 14, 21, 60 \},
\textbf{3}: \{ 12, 13 \},
\textbf{4}: \{ 9 \} \\
62 &
\textbf{2}: \{ 3, 15, 37, 61 \},
\textbf{4}: \{ 7 \},
\textbf{6}: \{ 5, 33 \} \\
63 &
\textbf{2}: \{ 5, 17, 20, 47, 62 \},
\textbf{3}: \{ 31 \},
\textbf{4}: \{ 44 \},
\textbf{5}: \{ 11 \},
\textbf{6}: \{ 2, 13, 25, 40 \},
\textbf{7}: \{ 29 \},
\textbf{9}: \{ 4, 10, 37, 55 \},
\textbf{14}: \{ 8 \},
\textbf{21}: \{ 22 \} \\
64 &
\textbf{2}: \{ 63 \},
\textbf{4}: \{ 3, 5, 7, 15, 31 \},
\textbf{8}: \{ 9 \},
\textbf{16}: \{ 17 \},
\textbf{32}: \{ 33 \} \\
65 &
\textbf{2}: \{ 2, 4, 7, 8, 18, 64 \},
\textbf{3}: \{ 3 \},
\textbf{4}: \{ 12, 17, 19, 34 \},
\textbf{5}: \{ 6, 9, 16, 21, 36 \},
\textbf{10}: \{ 51 \},
\textbf{13}: \{ 14, 27 \} \\
66 &
\textbf{2}: \{ 17, 65 \},
\textbf{4}: \{ 5 \},
\textbf{6}: \{ 7, 25, 43 \},
\textbf{22}: \{ 23 \} \\
67 &
\textbf{2}: \{ 2, 3, 30, 66 \},
\textbf{3}: \{ 4, 29 \},
\textbf{4}: \{ 9 \} \\
68 &
\textbf{2}: \{ 67 \},
\textbf{4}: \{ 3, 5, 9, 13, 15, 33, 47 \},
\textbf{34}: \{ 35 \} \\
69 &
\textbf{2}: \{ 5, 68 \},
\textbf{3}: \{ 2, 4, 7 \},
\textbf{6}: \{ 22 \},
\textbf{23}: \{ 47 \} \\
70 &
\textbf{2}: \{ 19, 69 \},
\textbf{4}: \{ 3, 9, 13, 23 \},
\textbf{10}: \{ 11, 31, 41 \},
\textbf{14}: \{ 29, 43 \} \\
71 &
\textbf{2}: \{ 7, 14, 23, 70 \},
\textbf{3}: \{ 2 \},
\textbf{4}: \{ 20 \},
\textbf{5}: \{ 5 \} \\
72 &
\textbf{2}: \{ 23, 71 \},
\textbf{4}: \{ 11, 35 \},
\textbf{6}: \{ 7, 43 \},
\textbf{8}: \{ 5, 17, 41, 53 \},
\textbf{12}: \{ 13 \},
\textbf{18}: \{ 19, 55 \},
\textbf{24}: \{ 25 \},
\textbf{36}: \{ 37 \} \\
73 &
\textbf{2}: \{ 3, 5, 6, 7, 9, 10, 18, 27, 72 \},
\textbf{3}: \{ 2, 8 \} \\
74 &
\textbf{2}: \{ 3, 5, 11, 23, 31, 73 \},
\textbf{4}: \{ 7 \},
\textbf{6}: \{ 47 \} \\
75 &
\textbf{2}: \{ 14, 74 \},
\textbf{3}: \{ 13 \},
\textbf{4}: \{ 2, 32 \},
\textbf{5}: \{ 11 \},
\textbf{6}: \{ 4, 7, 49 \},
\textbf{15}: \{ 16 \},
\textbf{25}: \{ 26 \} \\
76 &
\textbf{2}: \{ 3, 27, 75 \},
\textbf{4}: \{ 5, 13, 23, 37, 65 \},
\textbf{6}: \{ 7 \},
\textbf{8}: \{ 45 \},
\textbf{38}: \{ 39 \} \\
77 &
\textbf{2}: \{ 6, 10, 17, 76 \},
\textbf{3}: \{ 2, 3, 4 \},
\textbf{4}: \{ 20, 32 \},
\textbf{7}: \{ 8, 15 \},
\textbf{11}: \{ 12, 23, 34 \},
\textbf{14}: \{ 43 \} \\
78 &
\textbf{2}: \{ 17, 77 \},
\textbf{4}: \{ 5, 11 \},
\textbf{6}: \{ 7, 25, 29, 31, 43, 55 \},
\textbf{26}: \{ 53 \} \\
79 &
\textbf{2}: \{ 3, 12, 24, 78 \},
\textbf{3}: \{ 2, 23 \},
\textbf{4}: \{ 8 \} \\
80 &
\textbf{2}: \{ 79 \},
\textbf{4}: \{ 7, 19, 39, 47, 53 \},
\textbf{6}: \{ 43 \},
\textbf{8}: \{ 3, 13, 29, 57 \},
\textbf{10}: \{ 11, 31, 71 \},
\textbf{16}: \{ 9, 17, 49 \},
\textbf{20}: \{ 21 \},
\textbf{40}: \{ 41 \} \\
81 &
\textbf{2}: \{ 2, 8, 26, 80 \},
\textbf{3}: \{ 4 \},
\textbf{9}: \{ 10 \},
\textbf{27}: \{ 28 \} \\
82 &
\textbf{2}: \{ 3, 5, 7, 9, 23, 81 \},
\textbf{6}: \{ 37 \} \\
83 &
\textbf{2}: \{ 2, 82 \},
\textbf{3}: \{ 3 \} \\
84 &
\textbf{2}: \{ 47, 83 \},
\textbf{4}: \{ 5, 11, 41, 53 \},
\textbf{6}: \{ 19, 55, 67 \},
\textbf{12}: \{ 13, 25, 61 \},
\textbf{14}: \{ 71 \},
\textbf{28}: \{ 29 \},
\textbf{42}: \{ 43 \} \\
85 &
\textbf{2}: \{ 13, 38, 84 \},
\textbf{3}: \{ 3, 42 \},
\textbf{4}: \{ 2, 4, 9, 12, 14, 33 \},
\textbf{5}: \{ 6, 21, 26 \},
\textbf{10}: \{ 16 \},
\textbf{17}: \{ 18, 69 \} \\
86 &
\textbf{2}: \{ 3, 7, 27, 85 \},
\textbf{4}: \{ 9 \},
\textbf{6}: \{ 11, 49 \} \\
87 &
\textbf{2}: \{ 5, 86 \},
\textbf{3}: \{ 2, 4, 10 \},
\textbf{4}: \{ 17, 20 \},
\textbf{6}: \{ 7, 28, 46 \},
\textbf{29}: \{ 59 \} \\
88 &
\textbf{2}: \{ 7, 87 \},
\textbf{4}: \{ 3, 5, 13, 15, 19, 43 \},
\textbf{8}: \{ 9, 17, 21, 65 \},
\textbf{22}: \{ 23, 67 \},
\textbf{44}: \{ 45 \} \\
89 &
\textbf{2}: \{ 3, 5, 11, 12, 34, 88 \},
\textbf{4}: \{ 2 \} \\
90 &
\textbf{2}: \{ 29, 89 \},
\textbf{4}: \{ 17, 23 \},
\textbf{6}: \{ 7, 49 \},
\textbf{10}: \{ 11, 71 \},
\textbf{18}: \{ 19, 37 \},
\textbf{30}: \{ 31 \} \\
91 &
\textbf{2}: \{ 10, 12, 17, 62, 90 \},
\textbf{3}: \{ 2, 3, 4, 9, 11, 16, 19, 30, 45, 68 \},
\textbf{4}: \{ 5, 6, 18, 25, 34 \},
\textbf{6}: \{ 48 \},
\textbf{7}: \{ 8, 15, 22, 36 \},
\textbf{13}: \{ 27, 40, 53 \},
\textbf{14}: \{ 64 \} \\
92 &
\textbf{2}: \{ 7, 91 \},
\textbf{4}: \{ 3, 5, 9, 45 \},
\textbf{46}: \{ 47 \} \\
93 &
\textbf{2}: \{ 11, 23, 26, 92 \},
\textbf{3}: \{ 5, 7, 13, 14, 25, 37, 46 \},
\textbf{5}: \{ 2 \},
\textbf{6}: \{ 4, 61 \},
\textbf{31}: \{ 32 \} \\
94 &
\textbf{2}: \{ 5, 93 \},
\textbf{4}: \{ 3 \} \\
95 &
\textbf{2}: \{ 14, 69, 94 \},
\textbf{3}: \{ 2, 7, 17 \},
\textbf{4}: \{ 4, 8, 18 \},
\textbf{5}: \{ 6, 21, 31 \},
\textbf{6}: \{ 49 \},
\textbf{10}: \{ 11, 56 \},
\textbf{19}: \{ 39, 58 \} \\
96 &
\textbf{2}: \{ 95 \},
\textbf{4}: \{ 11, 23, 47 \},
\textbf{6}: \{ 19, 31 \},
\textbf{8}: \{ 5, 41 \},
\textbf{12}: \{ 7, 13, 79 \},
\textbf{16}: \{ 17 \},
\textbf{24}: \{ 25 \},
\textbf{32}: \{ 65 \},
\textbf{48}: \{ 49 \} \\
97 &
\textbf{2}: \{ 2, 4, 5, 6, 8, 19, 22, 33, 36, 96 \},
\textbf{3}: \{ 35 \} \\
98 &
\textbf{2}: \{ 3, 13, 19, 97 \},
\textbf{4}: \{ 9 \},
\textbf{6}: \{ 67 \},
\textbf{14}: \{ 15 \} \\
99 &
\textbf{2}: \{ 2, 8, 32, 98 \},
\textbf{3}: \{ 4, 5, 7 \},
\textbf{4}: \{ 26 \},
\textbf{6}: \{ 43 \},
\textbf{9}: \{ 19, 37 \},
\textbf{11}: \{ 23, 89 \},
\textbf{18}: \{ 10 \},
\textbf{33}: \{ 34 \} \\
100 &
\textbf{2}: \{ 19, 99 \},
\textbf{4}: \{ 3, 7, 9, 13, 49, 57 \},
\textbf{10}: \{ 11 \},
\textbf{20}: \{ 21 \},
\textbf{50}: \{ 51 \} \\
\end{array}
$
\end{minipage}
\end{table}



\end{document}